%% LyX 2.2.3 created this file.  For more info, see http://www.lyx.org/.
%% Do not edit unless you really know what you are doing.
\documentclass[oneside,english]{amsart}
\usepackage[T1]{fontenc}
\usepackage[latin9]{inputenc}
\usepackage{geometry}
\geometry{verbose,tmargin=3cm,bmargin=3cm,lmargin=3cm,rmargin=3cm}
\usepackage{verbatim}
\usepackage{amstext}
\usepackage{amsthm}
\usepackage{amssymb}
 \usepackage{url}
 
\makeatletter
%%%%%%%%%%%%%%%%%%%%%%%%%%%%%% Textclass specific LaTeX commands.
\numberwithin{equation}{section}
\numberwithin{figure}{section}
\theoremstyle{plain}
\newtheorem{thm}{\protect\theoremname}
  \theoremstyle{plain}
\newtheorem{lem}[thm]{\protect\lemmaname}
	\theoremstyle{plain}
\newtheorem{cor}[thm]{\protect\corollaryname}
  \theoremstyle{remark}
  \newtheorem{rem}[thm]{\protect\remarkname}
  \theoremstyle{definition}
  \newtheorem{example}[thm]{\protect\examplename}
\newtheorem{prop}[thm]{Proposition}

\makeatother

\usepackage{babel}
  \providecommand{\corollaryname}{Corollary}
  \providecommand{\examplename}{Example}
  \providecommand{\lemmaname}{Lemma}
  \providecommand{\remarkname}{Remark}
\providecommand{\theoremname}{Theorem}

\begin{document}

\title{Finite generating functions for the sum of digits sequence}

\author{C. Vignat and T. Wakhare}

\address{L.S.S. Supelec, Universite Paris Sud, Orsay, France and Department
of Mathematics, Tulane University, New Orleans, USA}

\address{University of Maryland, College Park, MD 20742, USA}

\begin{abstract}
We derive some new finite sums involving the sequence $s_{2}\left(n\right),$
the sum of digits of the expansion of $n$ in base $2.$ These functions
allow us to generalize some classical results obtained by Allouche,
Shallit and others.
\end{abstract}

\keywords{sum of digits, Hurwitz zeta function, infinite products, Lambert transform}
\maketitle

\section{Introduction and main results}

The sum of digits sequence $s_{2}\left(n\right)$ counts the number
of ones in the binary expansion of $n,$ and starts, from $n=0,$
as follows:
\[
0,1,1,2,1,2,2,3,1,2,2,3,2,3,3,4\dots.
\]
This sequence has a strong redundancy expressed by the identities
\[
\begin{cases}
s_{2}\left(2n\right) & =s_{2}\left(n\right),\\
s_{2}\left(2n+1\right) & =s_{2}\left(n\right)+1,
\end{cases}
\]
which suggests that series of the form 
\[
\sum_{n}s_{2}\left(n\right)f\left(n\right)\thinspace\thinspace\text{or}\thinspace\thinspace\sum_{n}\left(-1\right)^{s_{2}\left(n\right)}f\left(n\right)
\]
can be computed explicitly for some simple sequences 
$\left\{ f\left(n\right)\right\} .$
Many results have been obtained in this direction, such as the beautiful
\begin{equation}
\sum_{n=1}^{+\infty}s_{2}\left(n\right)\left(\frac{1}{n^{p}}-\frac{1}{\left(n+1\right)^{p}}\right)=\frac{1-2^{1-p}}{1-2^{-p}}\zeta\left(p\right)=\frac{1}{1-2^{-p}}\eta\left(p\right)\label{eq:zeta-2}
\end{equation}
for any complex number $p$ with $\Re\left(p\right)>0,$ where $\zeta\left(p\right)$
is Riemann's zeta function $\zeta\left(p\right)=\sum_{n\ge1}\frac{1}{n^{p}}$
and $\eta\left(p\right)$ is Dirichlet's eta function $\eta\left(p\right)=\sum_{n\ge1}\frac{\left(-1\right)^{n-1}}{n^{p}}$. This identity was proved by Allouche and Shallit \cite{Allouche Shallit-1}.
Another classic result is
\begin{equation}
\label{log2}
\sum_{n=1}^{+\infty}\frac{s_{2}\left(n\right)}{n\left(n+1\right)}=2\log2,
\end{equation}
a proof of which was one of the problems at the 1981 Putnam competition \cite{Putnam}.  
This result was generalized by Allouche and Shallit \cite{Allouche Shallit-1} to an arbitrary base $b$ as follows:
\begin{equation}
\label{logb}
\sum_{n=1}^{+\infty}\frac{s_{b}\left(n\right)}{n\left(n+1\right)}=\frac{b}{b-1}\log b,
\end{equation}
where $s_{b}\left(n\right)$ is the sum of all digits in the expansion
of $n$ in base $b.$

The sum of digits sequence is intimately connected to computer science and various aspects of discrete mathematics. The sequence $\left\{ (-1)^{s_2(n)} \right\}$ is the prototype for an \textit{automatic sequence.}
%, which is a finite automaton accepting the base-$b$ digits of a number $n$ as input. 
These sequences, fundamental to the study of `combinatorics on words', have been extensively explored by Allouche and Shallit  \cite{Allouche Shallit book}. Any method developed to address finite sums involving $s_2(n)$ could potentially transfer to other automatic sequences. %Add some info on the prouhet-terry-morse problem as justification?

Moreover, the Prouhet-Thue-Morse sequence $\left\{t_n\right\}$ defined by $t_n = s_2\left(n\right) \mod 2$ appears to be ubiquitous, as underlined in the famous article \cite{Allouche ubiquitous} by Allouche and Shallit. It is regular yet not ultimately periodic, which means that it can serve as a constructive element of many proofs.

Another motivation to study the sequence $s_2\left(n\right)$ is its link with the valuation sequence $\nu_2\left(n\right),$ the valuation of $n$ being defined as the exponent of the highest power of $2$ that divides $n$; this function is important in number theory. Identity \eqref{eq:2-adic} in Thm \ref{thm:2-adic} shows that some results about the sequence $s_2\left(n\right)$ can potentially transfer to the valuation sequence $\nu_2\left(n\right).$

Interestingly, the recursive properties of $s_b(n)$ also have an unexpected connection to $L$-functions from analytic number theory. An $L$-function is defined as $\sum_{n=1}^\infty \frac{\chi(n)}{n^s}$, where $\chi$ is a Dirichlet character which is periodic with period $k$. It satisfies recurrences of the type $\chi(nk+r)=\chi(r)$ , which resemble the recurrence $s_b(nb+r)=s_b(n)+r$. Therefore, any methods developed to address finite sums involving $s_b(n)$ could potentially lead to the discovery of new finite sums involving Dirichlet characters or finite $L$-functions; both applications are of fundamental importance to analytic number theorists.

Finally, identity \eqref{eq:zeta-2} expresses the remarkable result that a variation of the Dirichlet series related to the sequence $s_2\left(n\right)$ is proportional to the Riemann zeta function: given such a remarkable result, one may wonder if its elegance extends to a wider class of functions such as the Hurwitz zeta function - this is indeed the case as shown in Thm \ref{thm:Thm3}. Another relevant question is to wonder if the truncation of this series to a finite sum has an attractive closed form; this is positively answered in Thm \ref{thm:Thm1-1}. 

As a consequence, in this article, we are interested in finite sums that involve the sequence
$s_{b}\left(n\right),$ of the form
$$\sum_{n=0}^{b^p-1} s_{b}\left(n\right)f\left(n\right),$$
the idea being that summing over a fundamental
interval of the form $\left[0,b^{p}-1\right]$ keeps some symmetry
that allows us to find explicit values for these series for a variety
of sequences $\left\{ f\left(n\right)\right\} .$ 
In particular, when expressed in base $b$, 
the interval $\left[0,b^{p}-1\right]$ runs over all possible words of the form $\{w_1w_2\ldots w_p:w_i \in \{0,\ldots,b-1\}\}$.
%For the remainder of this article, we will address sums of the form $$\sum_{n=0}^{b^p-1} s_{b}\left(n\right)f\left(n\right).$$

The second main idea of this paper is to replace sequences $f\left(n\right)$ by functions $f\left(n+z\right)$ of an additional real variable $z$, noticing that many tools that allow to compute these sums - such as telescoping - still apply; the extra variable $z$ allows us (see for example the proof of Thm \ref{thm:Thm1-1}) to extend any identity based on a function $f$ to a family of identities based on functions related to $f$ by integral transforms. For example, the generalization of the series in \eqref{eq:zeta-2} to a function of a variable $x$ allows us to show in Thm \ref{thm:A-Lambert-series} that the obtained function can be related by a Laplace transform to a Lambert series type expression of the generating function of the sequence $s_b\left(n\right)$.

Our first result is the finite sum from Thm \ref{thm:Thm1-1}: for
all $\alpha>0,\,\,\alpha \ne 1$ and $z\ge0,$
%\begin{align}
%\sum_{n=1}^{2^{p}-1}s_{2}\left(n\right)\left(\frac{1}{\left(z+n\right)^{\alpha}}-\frac{1}{\left(z+n+1\right)^{\alpha}}\right) & =\sum_{l=0}^{p-1}\frac{1}{\left(2^{l+1}\right)^{\alpha}}\left[\zeta\left(\alpha,\frac{1}{2}+\frac{z}{2^{l+1}}\right)-\zeta\left(\alpha,1+\frac{z}{2^{l+1}}\right)\right.\label{eq:zeta-1}\\
% & \left.-\zeta\left(\alpha,\frac{1}{2}+\frac{z+2^{p}}{2^{l+1}}\right)+\zeta\left(\alpha,1+\frac{z+2^{p}}{2^{l+1}}\right)\right];\nonumber 
%\end{align}
\begin{align*}
\sum_{n=1}^{b^{p}-1}s_{b}\left(n\right)\left(\frac{1}{\left(z+n\right)^{\alpha}}-\frac{1}{\left(z+n+1\right)^{\alpha}}\right)
=&\sum_{l=0}^{p-1}\frac{1}{b^{\alpha l}}\left[\zeta\left(\alpha,1+\frac{z}{b^{l}}\right)-\zeta\left(\alpha,1+\frac{z+b^{p}}{b^{l}}\right)\right]\\
-&\sum_{l=1}^{p}\frac{b}{b^{\alpha l}}\left[\zeta\left(\alpha,1+\frac{z}{b^{l}}\right)-\zeta\left(\alpha,1+\frac{z+b^{p}}{b^{l}}\right)\right]. 
\end{align*}
It is a common generalization of both (\ref{eq:zeta-2}) and (\ref{logb}) which also depends on the extra variable $z$ and has a finite upper summation limit. The limit case $p\to\infty$
at $z=0$ is shown to recover (\ref{eq:zeta-2}), while the special case $\alpha=1$ is given by identity \eqref{eq:J2p-1 2}.
%Functionalizing (\ref{eq:zeta-2}) by adding the parameter $z$ allows us to apply nontrivial transformations such as the Laplace transform to our formulae, leading to even more finite sums.
We also prove, at the end of Section 6, the following explicit expression for a finite zeta function: for $\alpha > 2$ and $z\ge 0,$

\begin{align}
\sum_{n=1}^{b^{p}-1}\frac{s_{b}\left(n\right)}{\left(n+z\right)^{\alpha}} & =\sum_{l=0}^{p-1}\left[\zeta_{2}\left(\alpha,z+b^{l},\left(1,b^{l}\right)\right)-\zeta_{2}\left(\alpha,z+b^{l}+b^{p},\left(1,b^{l}\right)\right)\right]\\
 & -b\sum_{l=1}^{p}\left[\zeta_{2}\left(\alpha,z+b^{l},\left(1,b^{l}\right)\right)-\zeta_{2}\left(\alpha,z+b^{l}+b^{p},\left(1,b^{l}\right)\right)\right]\nonumber
\end{align}

and its asymptotic version

\begin{align}
\sum_{n=1}^{+\infty}\frac{s_{b}\left(n\right)}{\left(n+z\right)^{\alpha}} 
 & =-z\zeta\left(\alpha,z+1\right)+\zeta\left(\alpha-1,z+1\right)+\left(1-b\right)\sum_{l=1}^{+\infty}\zeta_{2}\left(\alpha,z+b^{l},\left(1,b^{l}\right)\right),
\end{align}
where $\zeta_{2}\left(\alpha,z,\left(a,b\right)\right)$ is the Barnes zeta function $
\zeta_{2}\left(\alpha,z,\left(a,b\right)\right)=\sum_{m_{1},m_{2} \ge 0}\left(z+a_{1}m_{1}+a_{2}m_{2}\right)^{-\alpha}.
$ We also deduce an expression for the special case $\alpha =2$, given by Cor \ref{cor30}.

Our second main result is the general formula
\begin{equation}
\sum_{n=0}^{2^{N+1}-1}\left(-1\right)^{s_{2}\left(n\right)}f\left(x+n\right)=\left(-1\right)^{N+1}2^{\frac{N\left(N+1\right)}{2}}\thinspace\sum_{k=0}^{2^{N+1}-N-2}p_{k}^{\left(N\right)}\Delta^{N+1}f\left(x+k\right),\label{eq:Sn Thm 2-1}
\end{equation}
where $\Delta$ is the forward finite difference operator, and the
positive weights $p_{k}^{\left(N\right)}$ are characterized in Thm
\ref{thm:Thm1}.

Our final result is the following formal infinite sum for any sequence
$\left\{ g\left(n\right)\right\}$, as given in Thm \ref{thm:Sum base 2}:
\[
\sum_{n\ge1}s_{2}\left(n\right)g\left(n\right)=\sum_{n,k\ge0}\sum_{l=0}^{2^{k}-1}g\left(2^{k+1}n+2^{k}+l\right).
\]
Setting $g(n)=0$ for $n\geq N_0$ recovers a finite sum for $s_2(n)$. Its extension to an arbitrary base $b$ is given in Thm \ref{thm:Sum base b}
and its explicit finite counterpart is
\[
\sum_{n=1}^{2^{p}-1}s_{2}\left(n\right)g\left(n\right)=\sum_{n=1}^{2^{p-1}-1}\sum_{k=0}^{p-l_{2n+1}}\sum_{l=0}^{2^{k}-1}g\left(n2^{k+1}+2^{k}+l\right),
\]
as described in Thm \ref{thm:Finite sum}, where $l_n$ is the number of digits in the binary representation of $n$.

\section{About the series $\sum_{n=1}^{N}\frac{s_{b}\left(n\right)}{\left(x+n\right)\left(x+n+1\right)}$}

Let us write
\[
J_{N}^{\left(b\right)}\left(x\right):=\sum_{n=1}^{N}\frac{s_{b}\left(n\right)}{\left(x+n\right)\left(x+n+1\right)}.
\]
We will also use the auxiliary function
\[
\gamma_{N}\left(x\right):=\sum_{n=0}^{N}\frac{1}{\left(x+2n+1\right)\left(x+2n+2\right)}.
\]
A simple computation shows that it can be expressed as the difference

\begin{eqnarray*}
\gamma_{N}\left(x\right) & = & \beta\left(x+1\right)-\beta\left(x+2N+3\right),
\end{eqnarray*}
where
\[
\beta\left(x\right):=\frac{1}{2}\left[\psi\left(\frac{x+1}{2}\right)-\psi\left(\frac{x}{2}\right)\right]
\]
is Stirling's beta function and $\psi\left(z\right)$ is the digamma function. We also denote the Hurwitz zeta function by
\[
\zeta\left(\alpha,z\right)=\sum_{n\ge0}\frac{1}{\left(z+n\right)^{\alpha}}.
\]
%and its truncated version by
%\[
%\tilde{\zeta}\left(\alpha,z\right)=\sum_{n\ge1}\frac{1}{\left(z+n\right)^{\alpha}}=\zeta\left(\alpha,z\right)-\frac{1}{z^{\alpha}}.
%\]
Our first result is as follows:
\begin{thm}
\label{thm:Thm1-1}For all $\alpha>0,\,\,\alpha \ne 1$ and $z\ge0,$ we have
\begin{align*}
\sum_{n=1}^{b^{p}-1}s_{b}\left(n\right)\left(\frac{1}{\left(z+n\right)^{\alpha}}-\frac{1}{\left(z+n+1\right)^{\alpha}}\right)
=&\sum_{l=0}^{p-1}\frac{1}{b^{\alpha l}}\left[\zeta\left(\alpha,1+\frac{z}{b^{l}}\right)-\zeta\left(\alpha,1+\frac{z+b^{p}}{b^{l}}\right)\right]\\
-&\sum_{l=1}^{p}\frac{b}{b^{\alpha l}}\left[\zeta\left(\alpha,1+\frac{z}{b^{l}}\right)-\zeta\left(\alpha,1+\frac{z+b^{p}}{b^{l}}\right)\right]. 
%=&\sum_{l=0}^{p-1}\frac{1}{b^{\alpha l}}\left[\tilde{\zeta}\left(\alpha,\frac{z}{b^{l}}\right)-\tilde{\zeta}\left(\alpha,\frac{z+b^{p}}{b^{l}}\right)\right]\\
%-&\sum_{l=1}^{p}\frac{b}{b^{\alpha l}}\left[\zeta\left(\alpha,1+\frac{z}{b^{l}}\right)-\zeta\left(\alpha,1+\frac{z+b^p}{b^{l}}\right)\right].
%=\sum_{l=0}^{p-1}\frac{1}{\left(b^{l+1}\right)^{\alpha}}\sum_{k=1}^{b-1}k\left[\zeta\left(\alpha,\frac{k}{b}+\frac{z}{b^{l+1}}\right)-\zeta\left(\alpha,\frac{k}{b}+\frac{z}{b^{l+1}}+b^{p-l-1}\right)\right.\\
% & -\left.\zeta\left(\alpha,\frac{k+1}{b}+\frac{z}{b^{l+1}}\right)+\zeta\left(\alpha,\frac{k+1}{b}+\frac{z}{b^{l+1}}+b^{p-l-1}\right)\right].
\end{align*}
The case $\alpha = 1$ is given, for all $z\ge0$, by
\begin{align*}
\sum_{n=1}^{b^{p}-1}\frac{s_{b}\left(n\right)}{\left(z+n\right)\left(z+n+1\right)} & =
\sum_{l=0}^{p-1}\frac{1}{b^{l}}\left[\psi\left(1+\frac{z+b^{p}}{b^{l}}\right)-\psi\left(1+\frac{z}{b^{l}}\right)\right]
-\sum_{l=1}^{p}\frac{b}{b^{l}}\left[\psi\left(1+\frac{z+b^{p}}{b^{l}}\right)-\psi\left(1+\frac{z}{b^{l}}\right)\right].
%\sum_{n=1}^{b^{p}-1}\frac{s_{b}\left(n\right)}{\left(z+n\right)\left(z+n+1\right)} & =\sum_{l=0}^{p-1}\frac{1}{b^{l+1}}\sum_{k=1}^{b-1}k\left[\psi\left(\frac{k}{b}+\frac{z}{b^{l+1}}+b^{p-l-1}\right)-\psi\left(\frac{k}{b}+\frac{z}{b^{l+1}}\right)\right.\\
% & -\left.\psi\left(\frac{k+1}{b}+\frac{z}{b^{l+1}}+b^{p-l-1}\right)+\psi\left(\frac{k+1}{b}+\frac{z}{b^{l+1}}\right)\right].
\end{align*}
\end{thm}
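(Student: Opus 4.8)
The plan is to exploit the self-similarity of $s_b$ encoded in the base-$b$ recurrence $s_b(bm+r)=s_b(m)+r$ for $0\le r\le b-1$, turning the target sum into a recursion in $p$. Write $S_p(z)$ for the left-hand side and abbreviate $f(n)=(z+n)^{-\alpha}$, so that the summand is $s_b(n)\bigl(f(n)-f(n+1)\bigr)$; note $S_0(z)=0$ since the sum is empty. Splitting each $n\in\{1,\dots,b^p-1\}$ as $n=bm+r$ with $0\le m\le b^{p-1}-1$ and $0\le r\le b-1$ (the pair $m=r=0$ contributing nothing since $s_b(0)=0$, so the rectangle of pairs $(m,r)$ ranges bijectively over $\{0,\dots,b^p-1\}$), the recurrence splits the sum into a part weighted by $s_b(m)$ and a part weighted by $r$.

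For the $s_b(m)$-part the inner telescoping sum $\sum_{r=0}^{b-1}\bigl(f(bm+r)-f(bm+r+1)\bigr)$ collapses to $f(bm)-f(b(m+1))$, and the homogeneity $f(bm)=b^{-\alpha}(z/b+m)^{-\alpha}$ identifies this part with $b^{-\alpha}S_{p-1}(z/b)$. This is the crucial structural step: it reproduces the same object at level $p-1$ with $z$ rescaled to $z/b$. For the $r$-part I would evaluate the inner sum $\sum_{r=0}^{b-1}r\bigl(f(bm+r)-f(bm+r+1)\bigr)$ by summation by parts, obtaining $\sum_{r=1}^{b-1}\bigl(f(bm+r)-f(b(m+1))\bigr)$; summing over $m$ and collecting the multiples and non-multiples of $b$ separately then yields a clean remainder $R_p(z)=P_0-bP_1$, where I set $P_l=P_l(z,p)$ with $P_l(w,q):=\sum_{k=1}^{b^{q-l}}(w+b^lk)^{-\alpha}$. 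Thus $S_p(z)=b^{-\alpha}S_{p-1}(z/b)+R_p(z)$.

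Unrolling this recursion down to $S_0=0$ gives $S_p(z)=\sum_{j=0}^{p-1}b^{-j\alpha}R_{p-j}(z/b^j)$. The key bookkeeping identity is the rescaling $b^{-j\alpha}\,P_l(z/b^j,\,p-j)=P_{j+l}(z,p)$, which follows at once from pulling the factor $b^{-j\alpha}=(b^j)^{-\alpha}$ inside the summand, since $b^j\bigl(z/b^j+b^lk\bigr)=z+b^{j+l}k$. It turns each term into $P_j-bP_{j+1}$, so that $S_p(z)=\sum_{j=0}^{p-1}(P_j-bP_{j+1})=\sum_{l=0}^{p-1}P_l-b\sum_{l=1}^{p}P_l$.

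It remains only to recognize each finite sum $P_l$ as a difference of Hurwitz zeta values: factoring out $b^{-\alpha l}$ and using $\sum_{k=1}^{M}(w+k)^{-\alpha}=\zeta(\alpha,w+1)-\zeta(\alpha,w+M+1)$ (valid for all $\alpha\ne1$ by analytic continuation of the Hurwitz zeta, since the difference is a genuine finite sum even when the two series diverge) with $w=z/b^l$, $M=b^{p-l}$ and $z/b^l+b^{p-l}=(z+b^p)/b^l$ gives exactly $P_l=b^{-\alpha l}\bigl[\zeta(\alpha,1+z/b^l)-\zeta(\alpha,1+(z+b^p)/b^l)\bigr]$, which is the stated identity. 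The case $\alpha=1$ follows from the very same computation, the only change being that $\sum_{k=1}^{M}(w+k)^{-1}=\psi(w+M+1)-\psi(w+1)$ is expressed through the digamma function rather than the (now divergent) Hurwitz zeta. I expect the main obstacle to be the careful bookkeeping in deriving $R_p(z)=P_0-bP_1$: one must track precisely which boundary terms at the endpoint $b^p$ cancel between the multiples-of-$b$ and non-multiples-of-$b$ contributions, after which the remainder of the argument is a mechanical unrolling.
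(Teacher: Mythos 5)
Your proposal is correct, and it is a genuinely different and in several respects cleaner route than the one in the paper. The paper first proves only the $\alpha=1$ case (for $b=2$) via the recurrence $J_{2N+1}(x)=\tfrac12 J_N(x/2)+\gamma_N(x)$, then lifts the resulting digamma identity to $0<\alpha<1$ by a Mellin transform, extends to all non-integer $\alpha>0$ by repeatedly differentiating in $z$, fills in the remaining values by continuity, and finally passes from base $2$ to base $b$ using the multiplication formula for the Hurwitz zeta function. You instead run the base-$b$ self-similarity $s_b(bm+r)=s_b(m)+r$ directly on the general-$\alpha$ sum: the telescoping over $r$ and the homogeneity $(z+bm)^{-\alpha}=b^{-\alpha}(z/b+m)^{-\alpha}$ give the recursion $S_p(z)=b^{-\alpha}S_{p-1}(z/b)+P_0-bP_1$ for \emph{every} $\alpha$ at once, and the unrolling plus the identification $P_l=b^{-\alpha l}\bigl[\zeta(\alpha,1+z/b^l)-\zeta(\alpha,1+(z+b^p)/b^l)\bigr]$ finishes the proof. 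I checked your bookkeeping: the summation by parts giving $\sum_{r=1}^{b-1}\bigl(f(bm+r)-f(b(m+1))\bigr)$, the boundary cancellation yielding $R_p=P_0-bP_1$ (the $f(b^p)$ terms indeed cancel between the two groups), the rescaling $b^{-j\alpha}P_l(z/b^j,p-j)=P_{j+l}(z,p)$, and the endpoint term $P_p=(z+b^p)^{-\alpha}$ are all correct, as is your remark that the finite-sum identity for Hurwitz zeta differences persists under analytic continuation. What your approach buys is the complete elimination of the Mellin transform, the differentiation-in-$z$ continuation argument, and the multiplication formula; it also treats the $\alpha=1$ case uniformly (swap $\zeta$ for $\psi$ in the last step) and in fact establishes the identity for all complex $\alpha\ne1$, not just $\alpha>0$. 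What the paper's route buys is the intermediate machinery (the explicit $J_{2^p-1}$ formula and the Mellin/Laplace dictionary) that it reuses later for the asymptotics in Theorem 3 and for the Lambert series results.
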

The binary case $b=2$ has the following simple form:
\begin{cor}
For all $\alpha \ne 1$ and $z\ge0$,
\begin{align}
\sum_{n=1}^{2^{p}-1}s_{2}\left(n\right)\left(\frac{1}{\left(z+n\right)^{\alpha}}-\frac{1}{\left(z+n+1\right)^{\alpha}}\right) & =\sum_{l=0}^{p-1}\frac{1}{\left(2^{l+1}\right)^{\alpha}}\left[\zeta\left(\alpha,\frac{1}{2}+\frac{z}{2^{l+1}}\right)-\zeta\left(\alpha,1+\frac{z}{2^{l+1}}\right)\right.\label{eq:zeta}\\
 & \left.-\zeta\left(\alpha,\frac{1}{2}+\frac{z+2^{p}}{2^{l+1}}\right)+\zeta\left(\alpha,1+\frac{z+2^{p}}{2^{l+1}}\right)\right].\nonumber 
\end{align}
The special case $\alpha = 1$ is given, for all $z\ge0$, by
\begin{align}
\sum_{n=1}^{2^{p}-1}\frac{s_{2}\left(n\right)}{\left(z+n\right)\left(z+n+1\right)} & =\sum_{l=0}^{p-1}\frac{1}{2^{l+1}}\left[\psi\left(\frac{z}{2^{l+1}}+1\right)-\psi\left(\frac{z}{2^{l+1}}+\frac{1}{2}\right)\right.\label{eq:J2p-1 2}\\
 & \left.-\psi\left(\frac{z+2^p}{2^{l+1}}+1\right)+\psi\left(\frac{z+2^p}{2^{l+1}}+\frac{1}{2}\right)\right].\nonumber
\end{align}
\end{cor}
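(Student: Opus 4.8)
The plan is to set $f(n)=(z+n)^{-\alpha}$, start the sum at $n=0$ (harmless, since $s_b(0)=0$), and apply summation by parts so as to transfer the telescoping factor $f(n)-f(n+1)$ onto the digit-sum sequence. Writing $N=b^p-1$, Abel summation gives
\[
\sum_{n=0}^{N}s_b(n)\bigl(f(n)-f(n+1)\bigr)=\sum_{n=1}^{N}\bigl(s_b(n)-s_b(n-1)\bigr)f(n)-s_b(N)\,f(N+1),
\]
so everything reduces to the consecutive difference $s_b(n)-s_b(n-1)$ and the boundary value $s_b(b^p-1)=p(b-1)$.

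The arithmetic heart of the argument is the \emph{carry identity}
\[
s_b(n)-s_b(n-1)=1-(b-1)\,\nu_b(n),
\]
where $\nu_b$ is the $b$-adic valuation: passing from $n-1$ to $n$ replaces $\nu_b(n)$ trailing digits equal to $b-1$ by zeros and raises the next digit by one. Substituting this, and rewriting the boundary term as $p(b-1)f(b^p)=(b-1)\nu_b(b^p)f(b^p)$ so that it merges into the valuation sum, I expect to reach the clean intermediate identity
\[
\sum_{n=1}^{b^p-1}s_b(n)\bigl(f(n)-f(n+1)\bigr)=\sum_{n=1}^{b^p-1}f(n)-(b-1)\sum_{n=1}^{b^p}\nu_b(n)\,f(n).
\]
Linearizing the valuation through $\nu_b(n)=\sum_{l\ge1}\mathbf{1}[\,b^l\mid n\,]$ then collapses the weighted sum into a double sum over $b$-adic multiples, $\sum_{n=1}^{b^p}\nu_b(n)f(n)=\sum_{l=1}^{p}\sum_{k=1}^{b^{p-l}}f(kb^l)$, the bound $l\le p$ coming from $b^l\le b^p$.

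Each block is now turned into a Hurwitz zeta difference by factoring $z+kb^l=b^l(z/b^l+k)$ and telescoping: for a positive integer $c$ one has $\zeta(\alpha,a)-\zeta(\alpha,a+c)=\sum_{j=0}^{c-1}(a+j)^{-\alpha}$, whence
\[
\sum_{k=1}^{b^{p-l}}(z+kb^l)^{-\alpha}=\frac{1}{b^{\alpha l}}\Bigl[\zeta\bigl(\alpha,1+\tfrac{z}{b^l}\bigr)-\zeta\bigl(\alpha,1+\tfrac{z+b^p}{b^l}\bigr)\Bigr].
\]
Applying this to the valuation blocks ($1\le l\le p$) and to the plain sum $\sum_{n=1}^{b^p-1}f(n)$ (an $l=0$ block, up to its endpoint) produces the two sums over $l=0,\dots,p-1$ and $l=1,\dots,p$ in the statement. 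The one genuinely fiddly point, and the place I expect to spend the most care, is the boundary bookkeeping: the $l=0$ block overshoots $\sum_{n=1}^{b^p-1}f(n)$ by exactly $f(b^p)$ while the $l=p$ block equals $f(b^p)$, and these must be reconciled so that the single coefficient $-(b-1)$ splits correctly into the $+1$ (range $l\le p-1$) and $-b$ (range $l\le p$) of the final form. For $\alpha=1$ the factor $f(n)-f(n+1)$ is $1/\bigl((z+n)(z+n+1)\bigr)$ and the now ill-defined zeta values are replaced by the convergent telescoping $\sum_{k=1}^{M}(a+k)^{-1}=\psi(a+M+1)-\psi(a+1)$, which yields the stated digamma expression; this is precisely why $\alpha\ne1$ is imposed only to keep the $\zeta$-form meaningful.

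The binary corollary then follows by setting $b=2$ and merging the two sums. Reindexing the second sum ($l\mapsto l+1$) pairs it term-by-term with the first over $l=0,\dots,p-1$, and the Hurwitz duplication formula $\zeta(\alpha,a)+\zeta\bigl(\alpha,a+\tfrac12\bigr)=2^{\alpha}\zeta(\alpha,2a)$, applied with $2a=1+z/2^l$ (so $a=\tfrac12+z/2^{l+1}$), rewrites each $2^{-\alpha l}\zeta(\alpha,1+z/2^l)$ as $2^{-\alpha(l+1)}\bigl[\zeta(\alpha,\tfrac12+\tfrac{z}{2^{l+1}})+\zeta(\alpha,1+\tfrac{z}{2^{l+1}})\bigr]$. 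After cancelling the duplicated $\zeta(\alpha,1+z/2^{l+1})$ against the reindexed second-sum term, and doing the same with $z$ replaced by $z+2^p$, one is left with exactly the single sum of $\frac{1}{(2^{l+1})^{\alpha}}\bigl[\zeta(\alpha,\tfrac12+\cdots)-\zeta(\alpha,1+\cdots)\bigr]$ asserted in the corollary. The $\alpha=1$ digamma version is obtained identically from the $\psi$-form, the additive $\log2$ in the digamma duplication formula cancelling in the differences.
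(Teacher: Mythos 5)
Your proof is correct, but it follows a genuinely different route from the paper's. The paper first proves the $\alpha=1$ case \eqref{eq:J2p-1 2} via the functional recurrence $J_{2N+1}(x)=\tfrac12 J_N(x/2)+\gamma_N(x)$ coming from $s_2(2n)=s_2(n)$, $s_2(2n+1)=s_2(n)+1$, then lifts it to general $\alpha$ by a Mellin transform (valid only for $0<\alpha<1$), followed by repeated differentiation in $z$ and a continuity argument to reach all $\alpha>0$, $\alpha\ne1$. You instead use Abel summation together with the carry identity $s_b(n)-s_b(n-1)=1-(b-1)\nu_b(n)$ --- which is precisely the content of the paper's Theorem \ref{thm:2-adic} (there proved later, and by a Lambert-series argument rather than the direct digit count you give) --- then linearize the valuation as $\sum_{l\ge1}\mathbf{1}[\,b^l\mid n\,]$ and telescope each block into a Hurwitz zeta difference; your boundary bookkeeping is exactly right, since the $l=p$ block equals $f(b^p)$ and absorbs the overshoot of the $l=0$ block, reproducing the split into the ranges $l\le p-1$ and $l\le p$. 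Your passage to the binary form \eqref{eq:zeta} via the duplication formula $\zeta(\alpha,a)+\zeta(\alpha,a+\tfrac12)=2^{\alpha}\zeta(\alpha,2a)$ is the reverse of the paper's direction (the paper obtains \eqref{eq:zeta} first and then assembles the general-$b$ form via the multiplication formula), but is equally valid. What your approach buys: every manipulation is a finite sum, and the only zeta identity used ($\zeta(\alpha,a)-\zeta(\alpha,a+1)=a^{-\alpha}$) persists under analytic continuation, so the result holds uniformly for all $\alpha\ne1$ in one stroke, with no Mellin transform, no restriction to $0<\alpha<1$, and no continuity argument; it also treats general $b$ from the outset. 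What the paper's route buys is the intermediate recurrence \eqref{eq:recurrence J2n-1} and the $\gamma_N$, $\beta$ machinery, which it reuses in the asymptotic analysis of Section 3.
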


\begin{rem}
The right-hand side of \eqref{eq:zeta} can also be expressed as the double sum 
\[
2^{\alpha}\sum_{l=0}^{p-1}\sum_{n\ge1}\left(\frac{\left(-1\right)^{n}}{\left(z+n \cdot 2^{l+1}\right)^{\alpha}}-\frac{\left(-1\right)^{n}}{\left(z+2^{p}+n \cdot 2^{l+1}\right)^{\alpha}}\right).
\]
\end{rem}

\begin{proof}
As for most other results in this section, the main part of the proof of Thm \ref{thm:Thm1-1} is provided only in the case $b=2$ for two reasons: first, its extension to an arbitrary base $b$ is elementary and does not require any extra tools; second, the binary case is easier to write and reveals the mechanism of the proof more directly. 
Let us thus assume $b=2.$ We first need the following recurrence for the function $J_{N}^{\left(2\right)}\left(x\right)$, which we write as $J_{N}\left(x\right)$ for simplicity.
\begin{lem}
A recurrence for $J_{N}\left(x\right)$ is 
\begin{equation}
J_{2N+1}\left(x\right)=\frac{1}{2}J_{N}\left(\frac{x}{2}\right)+\gamma_{N}\left(x\right)\label{eq:recurrence J2n-1}.
\end{equation}
As a consequence, for all integers $N$ and $p,$ 
\begin{equation}
J_{N \cdot 2^{p}-1}\left(x\right)=\frac{1}{2^{p}}J_{N-1}\left(\frac{x}{2^{p}}\right)+\sum_{l=0}^{p-1}\frac{1}{2^{l}}\gamma_{N \cdot 2^{p-1-l}-1}\left(\frac{x}{2^{l}}\right).\label{eq:JN2p-1}
\end{equation}
The case $N=1$ is
\begin{align}
J_{2^{p}-1}\left(x\right) & =\sum_{l=0}^{p-1}\frac{1}{2^{l}}\left[\beta\left(\frac{x}{2^{l}}+1\right)-\beta\left(\frac{x}{2^{l}}+2^{p-l}+1\right)\right]\label{eq:J2p-1}
\end{align}
or equivalently \eqref{eq:J2p-1 2}.
%\begin{align}
%\sum_{n=1}^{2^{p}-1}\frac{s_{2}\left(n\right)}{\left(x+n\right)\left(x+n+1\right)} & =\sum_{l=0}^{p-1}\frac{1}{2^{l+1}}\left[\psi\left(\frac{x}{2^{l+1}}+\frac{1}{2}\right)-\psi\left(\frac{x}{2^{l+1}}+1\right)\right.\label{eq:J2p-1 2}\\
% & \left.-\psi\left(\frac{x}{2^{l+1}}+2^{p-l-1}+\frac{1}{2}\right)+\psi\left(\frac{x}{2^{l+1}}+2^{p-l-1}+1\right)\right]\nonumber 
%\end{align}
\end{lem}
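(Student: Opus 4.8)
The plan is to establish the three assertions of the lemma in order: prove the basic recurrence \eqref{eq:recurrence J2n-1} directly, iterate it to get \eqref{eq:JN2p-1}, and then read off \eqref{eq:J2p-1} as the case $N=1$.

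First I would prove \eqref{eq:recurrence J2n-1} by splitting the defining sum $J_{2N+1}(x)=\sum_{n=1}^{2N+1}\frac{s_{2}(n)}{(x+n)(x+n+1)}$ according to the parity of $n$. Writing $n=2m$ for the even indices ($1\le m\le N$) and $n=2m+1$ for the odd ones ($0\le m\le N$), and invoking the digit recurrences $s_{2}(2m)=s_{2}(m)$ and $s_{2}(2m+1)=s_{2}(m)+1$, the sum separates into three pieces: a sum of $s_{2}(m)$ against $\frac{1}{(x+2m)(x+2m+1)}$, a sum of $s_{2}(m)$ against $\frac{1}{(x+2m+1)(x+2m+2)}$, and the sum of the ``$+1$'' contributions, which is exactly $\gamma_{N}(x)$. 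The key computation is then the partial-fraction collapse
\[
\frac{1}{(x+2m)(x+2m+1)}+\frac{1}{(x+2m+1)(x+2m+2)}=\frac{2}{(x+2m)(x+2m+2)}=\frac{1}{2\left(\frac{x}{2}+m\right)\left(\frac{x}{2}+m+1\right)},
\]
which merges the first two pieces into $\tfrac12 J_{N}(x/2)$, yielding \eqref{eq:recurrence J2n-1}. Note that the $m=0$ term of the odd sum carries $s_{2}(0)=0$, so it contributes only to $\gamma_{N}$, consistent with $J_{N}$ starting at $n=1$; this keeps the two $s_{2}(m)$ sums both running from $m=1$ to $N$.

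Next I would obtain \eqref{eq:JN2p-1} by iterating \eqref{eq:recurrence J2n-1}. Setting $2M+1=N\cdot2^{p}-1$, i.e.\ $M=N\cdot2^{p-1}-1$, the recurrence reads $J_{N\cdot2^{p}-1}(x)=\tfrac12 J_{N\cdot2^{p-1}-1}(x/2)+\gamma_{N\cdot2^{p-1}-1}(x)$, which lowers the exponent $p$ by one. A straightforward induction on $p$ then peels off one $\gamma$-term at each stage: at stage $l$ one picks up the summand $\frac{1}{2^{l}}\gamma_{N\cdot2^{p-1-l}-1}(x/2^{l})$, and after $p$ steps the residual term is $\frac{1}{2^{p}}J_{N-1}(x/2^{p})$, giving \eqref{eq:JN2p-1}. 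Finally, \eqref{eq:J2p-1} is the case $N=1$: here $J_{N-1}(x/2^{p})=J_{0}(\cdot)=0$ as an empty sum, so only the $\gamma$-terms survive, and substituting the closed form $\gamma_{M}(x)=\beta(x+1)-\beta(x+2M+3)$ with $M=2^{p-1-l}-1$ (so $2M+3=2^{p-l}+1$) at argument $x/2^{l}$ produces $\frac{1}{2^{l}}\bigl[\beta(x/2^{l}+1)-\beta(x/2^{l}+2^{p-l}+1)\bigr]$. The equivalence with \eqref{eq:J2p-1 2} is then a direct expansion of $\beta(x)=\tfrac12\bigl[\psi(\frac{x+1}{2})-\psi(\frac{x}{2})\bigr]$, using $\frac{z+2^{p}}{2^{l+1}}=\frac{z}{2^{l+1}}+2^{p-l-1}$ to match the shifted digamma arguments.

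The genuinely substantive step is the parity split together with the partial-fraction collapse, since this is where the self-similarity of $s_{2}$ is converted into the rescaling $x\mapsto x/2$ that drives the whole recursion; everything afterwards is bookkeeping. The main places to be careful are the index arithmetic in the iteration (tracking the exponents $p-1-l$ and the successive halvings of $x$) and the boundary term $m=0$, which must be assigned to $\gamma_{N}$ rather than to $J_{N}$.
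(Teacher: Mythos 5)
Your proof is correct and follows essentially the same route as the paper: a parity split of the defining sum combined with the digit recurrences $s_{2}(2m)=s_{2}(m)$, $s_{2}(2m+1)=s_{2}(m)+1$, the partial-fraction collapse to $\tfrac{1}{2}J_{N}(x/2)$, and then iteration and the $N=1$ specialization with $J_{0}=0$. The only cosmetic difference is that the paper states the one-step recurrence as $J_{2N-1}(x)=\tfrac12 J_{N-1}(x/2)+\gamma_{N-1}(x)$, which is your identity after the shift $N\mapsto N-1$.
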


\begin{proof}
Using the recurrences
\[
s_{2}\left(2n\right)=s_{2}\left(n\right),\thinspace\thinspace s_{2}\left(2n+1\right)=s_{2}\left(n\right)+1,
\]
we deduce
\begin{eqnarray*}
J_{2N-1}\left(x\right) & = & \sum_{n=1}^{N-1}s_{2}\left(n\right)\left[\frac{1}{\left(x+2n\right)\left(x+2n+1\right)}+\frac{1}{\left(x+2n+1\right)\left(x+2n+2\right)}\right]+\gamma_{N-1}\left(x\right)\\
 & = & \sum_{n=1}^{N-1}\frac{2s_{2}\left(n\right)}{\left(x+2n\right)\left(x+2n+2\right)}+\gamma_{N-1}\left(x\right)\\
 & = & \frac{1}{2}\sum_{n=1}^{N-1}\frac{s_{2}\left(n\right)}{\left(\frac{x}{2}+n\right)\left(\frac{x}{2}+n+1\right)}+\gamma_{N-1}\left(x\right)=\frac{1}{2}J_{N-1}\left(\frac{x}{2}\right)+\gamma_{N-1}\left(x\right).
\end{eqnarray*}
Iterating this identity gives
\[
J_{N \cdot 2^{p}-1}\left(x\right)=\frac{1}{2^{p}}J_{N-1}\left(\frac{x}{2^{p}}\right)+\sum_{l=0}^{p-1}\frac{1}{2^{l}}\gamma_{N \cdot 2^{p-1-l}-1}\left(\frac{x}{2^{l}}\right).
\]
Choosing $N=1$ and noticing that $J_{0}\left(x\right)=0$ gives
(\ref{eq:J2p-1}).
\end{proof}
The proof of Thm \ref{thm:Thm1-1} starts with the Mellin transform \cite[Entry 2.2.4.25]{Prudnikov-1}
\[
\int_{0}^{+\infty}\frac{x^{\alpha-1}}{\left(x+z\right)\left(x+z+1\right)}dx=\frac{\pi}{\sin\pi\alpha}\left(z^{\alpha-1}-\left(z+1\right)^{\alpha-1}\right),
\]
with $0<\alpha<1$ and $z\ge0$; replacing $z$ by $z+n$ produces
\[
\int_{0}^{+\infty}\frac{x^{\alpha-1}}{\left(x+z+n\right)\left(x+z+n+1\right)}dx=\frac{\pi}{\sin\pi\alpha}\left(\left(z+n\right)^{\alpha-1}-\left(z+n+1\right)^{\alpha-1}\right).
\]
Replacing $x$ by $x+z$ in the left-hand side of (\ref{eq:J2p-1 2}),
multiplying by $x^{\alpha-1}$ and integrating yields
\[
\frac{\pi}{\sin\pi\alpha}\sum_{n=1}^{2^{p}-1}s_{2}\left(n\right)\left(\frac{1}{\left(z+n\right)^{1-\alpha}}-\frac{1}{\left(z+n+1\right)^{1-\alpha}}\right).
\]
Next, using \cite[Entry 2.3.1.5]{Prudnikov},
\[
\int_{0}^{+\infty}x^{\alpha-1}\left[\psi\left(x+a\right)-\psi\left(x+b\right)\right]dx=\frac{\pi}{\sin\pi\alpha}\left[\zeta\left(1-\alpha,b\right)-\zeta\left(1-\alpha,a\right)\right],
\]
for $a,b>0$ and $0<\alpha<1$, allows us to deduce
\[
\int_{0}^{+\infty}x^{\alpha-1}\left[\psi\left(\frac{x}{2^{l+1}}+a\right)-\psi\left(\frac{x}{2^{l+1}}+b\right)\right]dx=\left(2^{l+1}\right)^{\alpha}\frac{\pi}{\sin\pi\alpha}\left[\zeta\left(1-\alpha,b\right)-\zeta\left(1-\alpha,a\right)\right].
\]
Replacing $x$ by $x+z$ in the right-hand side and integrating after
multiplication by $x^{\alpha-1}$ produces
\begin{align*}
\sum_{l=0}^{p-1}\frac{1}{2^{l+1}}\int_{0}^{+\infty}x^{\alpha-1}\left[\psi\left(\frac{x+z}{2^{l+1}}+1\right)-\psi\left(\frac{x+z}{2^{l+1}}+\frac{1}{2}\right)-\psi\left(\frac{x+z}{2^{l+1}}+2^{p-l-1}+1\right)+\psi\left(\frac{x+z}{2^{l+1}}+2^{p-l-1}+\frac{1}{2}\right)\right]dx\\
=\frac{\pi}{\sin\pi\alpha}\sum_{l=0}^{p-1}\frac{1}{\left(2^{l+1}\right)^{1-\alpha}}\left[\zeta\left(1-\alpha,\frac{1}{2}+\frac{z}{2^{l+1}}\right)-\zeta\left(1-\alpha,1+\frac{z}{2^{l+1}}\right)\right.\\
\left.-\zeta\left(1-\alpha,2^{p-l-1}+\frac{1}{2}+\frac{z}{2^{l+1}}\right)+\zeta\left(1-\alpha,2^{p-l-1}+1+\frac{z}{2^{l+1}}\right)\right]
\end{align*}
and we deduce, for all $z\ge0$ and for $0<\alpha<1,$
\begin{align*}
\sum_{n=1}^{2^{p}-1}s_{2}\left(n\right)\left(\frac{1}{\left(z+n\right)^{1-\alpha}}-\frac{1}{\left(z+n+1\right)^{1-\alpha}}\right) & =\sum_{l=0}^{p-1}\frac{1}{\left(2^{l+1}\right)^{1-\alpha}}\left[\zeta\left(1-\alpha,\frac{1}{2}+\frac{z}{2^{l+1}}\right)-\zeta\left(1-\alpha,1+\frac{z}{2^{l+1}}\right)\right.\\
 & \left.-\zeta\left(1-\alpha,\frac{1}{2}+\frac{z+2^{p}}{2^{l+1}}\right)+\zeta\left(1-\alpha,1+\frac{z+2^{p}}{2^{l+1}}\right)\right].
\end{align*}
Replacing $\alpha$ by $1-\alpha$ produces identity \eqref{eq:zeta} for any $0<\alpha<1.$ 

Taking the derivative with respect to the variable $z$ of the left-hand
side of (\ref{eq:zeta}) replaces $\alpha$ by $\alpha+1$ with a
prefactor $\left(-\alpha\right).$ The representation of the Hurwitz
zeta function \cite[25.11.8]{NIST}
\[
\zeta\left(\alpha,\frac{w+1}{2}\right)-\zeta\left(\alpha,\frac{w}{2}+1\right)=2^{\alpha}\sum_{n\ge1}\frac{\left(-1\right)^{n}}{\left(w+n\right)^{\alpha}}
\]
shows that computing the derivative with respect to $z$ of the right-hand
side of (\ref{eq:zeta}) produces exactly the same effect. Hence $\alpha\in\left(0,1\right)$
can be replaced by $\alpha+1$ so that (\ref{eq:zeta}) holds in fact
for all non-integer $\alpha>0.$ By continuity, it holds for all $\alpha>0$ such that $\alpha \ne 1.$

In the case of an arbitrary base $b,$ the previous result can easily be shown to extend to the following identity:
\begin{align*}
\sum_{n=1}^{b^{p}-1}s_{b}\left(n\right)\left(\frac{1}{\left(z+n\right)^{\alpha}}-\frac{1}{\left(z+n+1\right)^{\alpha}}\right) & =\sum_{l=0}^{p-1}\frac{1}{\left(b^{l+1}\right)^{\alpha}}\sum_{k=1}^{b-1}k\left[\zeta\left(\alpha,\frac{k}{b}+\frac{z}{b^{l+1}}\right)-\zeta\left(\alpha,\frac{k}{b}+\frac{z}{b^{l+1}}+b^{p-l-1}\right)\right.\\
 & -\left.\zeta\left(\alpha,\frac{k+1}{b}+\frac{z}{b^{l+1}}\right)+\zeta\left(\alpha,\frac{k+1}{b}+\frac{z}{b^{l+1}}+b^{p-l-1}\right)\right].
\end{align*}
Then denoting
\[
\alpha_{k}=\zeta\left(\alpha,\frac{k}{b}+\frac{z}{b^{l+1}}\right)-\zeta\left(\alpha,\frac{k}{b}+\frac{z}{b^{l+1}}+b^{p-l-1}\right),
\]
the inner sum in Thm \ref{thm:Thm1-1} reads
\[
\sum_{k=1}^{b-1}k\left[\alpha_{k}-\alpha_{k+1}\right] =\sum_{k=1}^{b}\alpha_{k}-b\alpha_{b}.
\]
Moreover,
\begin{align*}
\sum_{k=1}^{b}\alpha_{k} & =\sum_{k=1}^{b}\zeta\left(\alpha,\frac{k}{b}+\frac{z}{b^{l+1}}\right)-\zeta\left(\alpha,\frac{k}{b}+\frac{z}{b^{l+1}}+b^{p-l-1}\right)\\
 & =\sum_{k=1}^{b}\zeta\left(\alpha,\frac{k+z'}{b}\right)-\zeta\left(\alpha,\frac{k+z''}{b}\right)
\end{align*}
with
\[
z'=\frac{z}{b^{l}}\thinspace\thinspace \text{and} \thinspace\thinspace z''=\frac{z}{b^{l}}+b^{p-l}.
\]
This expression can be simplified using the multiplication formula for the Hurwitz zeta function \cite[25.11.15]{NIST}
\begin{equation}
b^{\alpha}\zeta\left(\alpha,z\right)-\left(\frac{b}{z}\right)^{\alpha}=\sum_{k=1}^{b}\zeta\left(\alpha,\frac{z+k}{b}\right).\label{eq:multiplication}
\end{equation}
By applying this multiplication formula, we obtain
\begin{align*}
\sum_{k=1}^{b}\alpha_{k} & =\left(b^{\alpha}\zeta\left(\alpha,z'\right)-\left(\frac{b}{z'}\right)^{\alpha}\right)-\left(b^{\alpha}\zeta\left(\alpha,z''\right)-\left(\frac{b}{z''}\right)^{\alpha}\right).
\end{align*}
Noticing that
\[
\sum_{n\ge1}\frac{1}{\left(z+n\right)^{\alpha}}=\zeta\left(\alpha,z\right)-\frac{1}{z^{\alpha}}=\zeta\left(\alpha,z+1\right),
\]
this sum reads equivalently
\[
\sum_{k=1}^{b}\alpha_{k}=b^{\alpha}\left[\zeta\left(\alpha,1+\frac{z}{b^{l}}\right)-\zeta\left(\alpha,1+\frac{z+b^{p}}{b^{l}}\right)\right].
\]
We deduce
\begin{align*}
\sum_{k=1}^{b-1}k\left[\alpha_{k}-\alpha_{k+1}\right] & =b^{\alpha}\left[\zeta\left(\alpha,1+\frac{z}{b^{l}}\right)-\zeta\left(\alpha,1+\frac{z+b^{p}}{b^{l}}\right)\right]\\
 & -b\left[\zeta\left(\alpha,1+\frac{z}{b^{l+1}}\right)-\zeta\left(\alpha,1+\frac{z}{b^{l+1}}+b^{p-l-1}\right)\right].
\end{align*}
Finally, the right-hand side sum in Thm \ref{thm:Thm1-1} is
\begin{align*}
\sum_{l=0}^{p-1}\frac{1}{b^{\alpha l}}\left[\zeta\left(\alpha,1+\frac{z}{b^{l}}\right)-\zeta\left(\alpha,1+\frac{z+b^{p}}{b^{l}}\right)\right]\\
-\sum_{l=0}^{p-1}\frac{b}{b^{\left(l+1\right)\alpha}}\left[\zeta\left(\alpha,1+\frac{z}{b^{l+1}}\right)-\zeta\left(\alpha,1+\frac{z}{b^{l+1}}+b^{p-l-1}\right)\right]
\end{align*}
and can be simplified to the final result
%begin{align}
\[
\sum_{l=0}^{p-1}\frac{1}{b^{\alpha l}}\left[\zeta\left(\alpha,1+\frac{z}{b^{l}}\right)-\zeta\left(\alpha,1+\frac{z+b^{p}}{b^{l}}\right)\right]\\
-\sum_{l=1}^{p}\frac{b}{b^{l\alpha}}\left[\zeta\left(\alpha,1+\frac{z}{b^{l}}\right)-\zeta\left(\alpha,1+\frac{z+b^{p}}{b^{l}}\right)\right].
\]
%\\
%=\left(1-b\right)\sum_{l=1}^{p-1}\frac{1}{b^{\alpha l}}\left[\zeta\left(\alpha,1+\frac{z}{b^{l}}\right)-\zeta\left(\alpha,1+\frac{z+b^{p}}{b^{l}}\right)\right]\\
%+\left[\zeta\left(\alpha,1+z\right)-\zeta\left(\alpha,1+z+b^{p}\right)\right]\\
%-\frac{b}{b^{p\alpha}}\left[\zeta\left(\alpha,1+\frac{z}{b^{p}}\right)-\zeta\left(\alpha,2+\frac{z}{b^{p}}\right)\right]
%\end{align*}
The case $b=2$ is equivalent to, but more involved than that in Corollary 2.

In the case $\alpha=1,$ we use the multiplication formula for the
digamma function \cite[5.15.7]{NIST}
\[
\psi\left(bz\right)=\frac{1}{b}\sum_{k=0}^{b-1}\psi\left(z+\frac{k}{b}\right) + \log b.
\]
Similarly to the above computation, we obtain
\[
\sum_{k=1}^{b-1}k\left[\alpha_{k}-\alpha_{k+1}\right]=\sum_{k=1}^{b}\alpha_{k}-b\alpha_{b}
\]
with
\[
\alpha_{k}=\psi\left(\frac{k}{b}+\frac{z+b^{p}}{b^{l+1}}\right)-\psi\left(\frac{k}{b}+\frac{z}{b^{l+1}}\right).
\]
Therefore
\[
\sum_{k=1}^{b}\alpha_{k}=\sum_{k=1}^{b}\psi\left(\frac{k}{b}+z'\right)-\psi\left(\frac{k}{b}+z''\right),
\]
with
\[
z'=\frac{z+b^{p}}{b^{l+1}},\thinspace\thinspace z''=\frac{z}{b^{l+1}}.
\]
Since
\begin{align*}
\sum_{k=1}^{b}\psi\left(z+\frac{k}{b}\right) & =\sum_{k=0}^{b-1}\psi\left(z+\frac{k}{b}\right)+\psi\left(z+1\right)-\psi\left(z\right)\\
 & =\sum_{k=0}^{b-1}\psi\left(z+\frac{k}{b}\right)+\frac{1}{z},
\end{align*}
we deduce
\begin{align*}
\sum_{k=1}^{b}\alpha_{k} & =\frac{1}{z'}-\frac{1}{z''}+b\psi\left(bz'\right)-b\psi\left(bz''\right)\\
 & =\frac{b^{l+1}}{z+b^{p}}-\frac{b^{l+1}}{z}+b\psi\left(\frac{z}{b^{l}}+b^{p-l}\right)-b\psi\left(\frac{z}{b^{l}}\right)\\
 & =b\left[\frac{b^{l}}{z+b^{p}}+\psi\left(\frac{z}{b^{l}}+b^{p-l}\right)-\frac{b^{l}}{z}-b\psi\left(\frac{z}{b^{l}}\right)\right]\\
 & =b\left[\psi\left(1+\frac{z+b^{p}}{b^{l}}\right)-\psi\left(1+\frac{z}{b^{l}}\right)\right].
\end{align*}
Hence
\begin{align*}
\sum_{k=1}^{b-1}k\left[\alpha_{k}-\alpha_{k+1}\right] & =b\left[\psi\left(1+\frac{z+b^{p}}{b^{l}}\right)-\psi\left(1+\frac{z}{b^{l}}\right)-\alpha_{b}\right]\\
 & =b\left[\psi\left(1+\frac{z+b^{p}}{b^{l}}\right)-\psi\left(1+\frac{z}{b^{l}}\right)\right]\\
 & -b\left[\psi\left(1+\frac{z+b^{p}}{b^{l+1}}\right)-\psi\left(1+\frac{z}{b^{l+1}}\right)\right].
\end{align*}
The right-hand side of Thm \ref{thm:Thm1-1} reads then
\[
\sum_{l=0}^{p-1}\frac{1}{b^{l}}\left[\psi\left(1+\frac{z+b^{p}}{b^{l}}\right)-\psi\left(1+\frac{z}{b^{l}}\right)-\psi\left(1+\frac{z+b^{p}}{b^{l+1}}\right)+\psi\left(1+\frac{z}{b^{l+1}}\right)\right]
\]
and can be easily simplified to the final result.

\end{proof}

\section{Asymptotics}

As a consequence of (\ref{eq:JN2p-1}), we deduce several expressions
for $J_{\infty}\left(x\right)$. Notice that a limit such as
\begin{equation}
\lim_{p\to+\infty}\sum_{n=1}^{2^{p}-1}s_{2}\left(n\right)\left(\frac{1}{\left(z+n\right)^{\alpha}}-\frac{1}{\left(z+n+1\right)^{\alpha}}\right)\label{eq:limit power}
\end{equation}
is the limit of a subsequence of the sequence 
\[
\sum_{n=1}^{q}s_{2}\left(n\right)\left(\frac{1}{\left(z+n\right)^{\alpha}}-\frac{1}{\left(z+n+1\right)^{\alpha}}\right).
\]
Since the latter is convergent as $q\to+\infty,$ any subsequence
will converge toward the same limit so that there is no ambiguity
in computing the limit (\ref{eq:limit power}).
\begin{thm}
\label{thm:Thm3}For any $\alpha>0$ such as $\alpha \ne 1$ and for any $z\ge0,$ the series
\[
\sum_{n=1}^{+\infty}s_{b}\left(n\right)\left(\frac{1}{\left(z+n\right)^{\alpha}}-\frac{1}{\left(z+n+1\right)^{\alpha}}\right)
\]
is equal to
\[
b\zeta\left(\alpha,1+z\right) + \left(1-b\right)\sum_{l=1}^{+\infty}\frac{1}{b^{l \alpha}}\zeta\left(\alpha,1+\frac{z}{b^l}\right).
%\sum_{k=1}^{b-1}k\left[\zeta\left(\alpha,\frac{k}{b}+\frac{z}{b^{l+1}}\right)-\zeta\left(\alpha,\frac{k+1}{b}+\frac{z}{b^{l+1}}\right)\right].
\]
%\[
%\sum_{l=0}^{+\infty}\frac{1}{\left(b^{l+1}\right)^{\alpha}}
%\sum_{k=1}^{b-1}k\left[\zeta\left(\alpha,\frac{k}{b}+\frac{z}{b^{l+1}}\right)-\zeta\left(\alpha,\frac{k+1}{b}+\frac{z}{b^{l+1}}\right)\right].
%\]
\end{thm}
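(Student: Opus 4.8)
The plan is to obtain the series as the limit $p\to\infty$ of the finite identity in Theorem \ref{thm:Thm1-1}. By the observation preceding this theorem, the partial sums $\sum_{n=1}^{b^{p}-1}$ form a subsequence of the convergent sequence $\sum_{n=1}^{q}$ (the terms are $O(s_{b}(n)\,n^{-\alpha-1})$, hence summable for every $\alpha>0$), so the left-hand side converges to the stated series and it suffices to evaluate the limit of the right-hand side of Theorem \ref{thm:Thm1-1}. I would split that right-hand side into its \emph{interior} contribution, built from the terms $b^{-\alpha l}\zeta(\alpha,1+z/b^{l})$, and its \emph{boundary} contribution, built from the terms $b^{-\alpha l}\zeta(\alpha,1+(z+b^{p})/b^{l})$ whose argument grows with $p$.

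For the interior part, reindexing the two sums gives the closed form
\[
\zeta(\alpha,1+z)+(1-b)\sum_{l=1}^{p-1}\frac{1}{b^{\alpha l}}\zeta\!\left(\alpha,1+\frac{z}{b^{l}}\right)-\frac{b}{b^{\alpha p}}\zeta\!\left(\alpha,1+\frac{z}{b^{p}}\right),
\]
and since $\zeta(\alpha,1+z/b^{l})$ stays bounded as $l$ varies while $b^{-\alpha l}$ is geometric, the series converges for every $\alpha>0$ and the end term tends to $0$; this produces $\zeta(\alpha,1+z)+(1-b)\sum_{l\ge1}b^{-\alpha l}\zeta(\alpha,1+z/b^{l})$. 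The heart of the argument is the boundary part: here I would first restrict to $\alpha>1$, where each factor $\zeta(\alpha,1+(z+b^{p})/b^{l})\to0$ as its argument tends to infinity, uniformly dominated by the summable sequence $\zeta(\alpha,1)\,b^{-\alpha l}$, so that dominated convergence forces the entire boundary contribution to vanish. This establishes the identity for all real $\alpha>1$.

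To reach the remaining range $0<\alpha<1$ I expect the cleanest route is analytic continuation in $\alpha$ rather than a direct limit: the left-hand series is analytic on $\Re\alpha>0$, while the candidate right-hand side is meromorphic there with its only singularity at the pole $\alpha=1$; two such functions agreeing on the ray $\alpha>1$ must agree on the whole connected region $\{\Re\alpha>0\}\setminus\{1\}$, giving all $\alpha>0$ with $\alpha\ne1$. The main obstacle is precisely why one cannot simply let $p\to\infty$ when $0<\alpha<1$: there each boundary factor behaves like $(b^{p-l})^{1-\alpha}/(\alpha-1)$ and diverges individually, so only the signed combination in Theorem \ref{thm:Thm1-1} cancels these blow-ups, dominated convergence is unavailable, and the analytic-continuation detour (or a careful Euler--Maclaurin bookkeeping of the cancellation) is needed. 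As a final consistency check I would set $z=0$ and $b=2$, so that the series reduces to the left-hand side of \eqref{eq:zeta-2} and the evaluation $\zeta(\alpha,1)\big[1+(1-2)\sum_{l\ge1}2^{-\alpha l}\big]=\frac{1-2^{1-\alpha}}{1-2^{-\alpha}}\zeta(\alpha)$ recovers \eqref{eq:zeta-2}, which is how I would pin down the leading constant in the general formula.
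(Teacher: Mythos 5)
Your argument is correct, and for the delicate range $0<\alpha<1$ it takes a genuinely different route from the paper's. The paper never separates the two Hurwitz zeta values at the shifted argument $\frac{z+b^{p}}{b^{l}}$: working with the $b=2$ form \eqref{eq:zeta}, it keeps the boundary contribution as a sum of \emph{differences} and inserts the integral representation
\[
\zeta\left(\alpha,\tfrac{a}{2}\right)-\zeta\left(\alpha,\tfrac{a+1}{2}\right)=\frac{2^{\alpha}}{\Gamma\left(\alpha\right)}\int_{0}^{+\infty}\frac{x^{\alpha-1}}{1+e^{-x}}e^{-ax}dx,
\]
which is valid and exponentially small in $a$ for \emph{every} $\alpha>0$; an elementary optimization of $2^{-\alpha l}e^{-2^{p-l}x}$ over $l$ then bounds the whole boundary term by $Kp\,2^{-\alpha p}\to0$. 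This disposes of all $\alpha>0$, $\alpha\ne1$, in one stroke, and it rigorously captures exactly the cancellation you correctly identify as the obstruction to a termwise limit when $0<\alpha<1$. Your substitute --- prove the identity for $\alpha>1$ by domination, then invoke the identity theorem in $\alpha$ on $\left\{ \Re\alpha>0\right\}$ --- is legitimate: the left side is a locally uniformly convergent series since its terms are $O\left(s_{b}\left(n\right)n^{-\Re\alpha-1}\right)$, and the right side is a locally uniformly convergent sum of meromorphic functions whose residues at $\alpha=1$ cancel (each $\zeta\left(\alpha,\cdot\right)$ contributes residue $1$, and $1+\left(1-b\right)\sum_{l\ge1}b^{-l}=0$). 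What your route buys is that it avoids any special-function integral; what it costs is the need to verify analyticity of both sides, whereas the paper's bound is entirely real-variable and uniform in $\alpha$.

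One substantive remark on the conclusion: your interior reindexing yields the leading term $\zeta\left(\alpha,1+z\right)$, not $b\,\zeta\left(\alpha,1+z\right)$, i.e.
\[
\zeta\left(\alpha,1+z\right)+\left(1-b\right)\sum_{l\ge1}\frac{1}{b^{\alpha l}}\zeta\left(\alpha,1+\frac{z}{b^{l}}\right),
\]
and your own check at $z=0$, $b=2$ confirms that this, and not the displayed statement, reproduces \eqref{eq:zeta-2}: the displayed formula would give $\frac{2-3\cdot2^{-\alpha}}{1-2^{-\alpha}}\zeta\left(\alpha\right)$ rather than $\frac{1-2^{1-\alpha}}{1-2^{-\alpha}}\zeta\left(\alpha\right)$. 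So you have proved the corrected form of the theorem; the printed statement agrees with yours only if its sum is read as starting at $l=0$.
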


\begin{proof}
We prove the case $b=2$ by showing that the term
\[
\sum_{l=0}^{p-1}\frac{1}{\left(2^{l+1}\right)^{\alpha}}\left[\zeta\left(\alpha,\frac{1}{2}+\frac{z+2^{p}}{2^{l+1}}\right)-\zeta\left(\alpha,1+\frac{z+2^{p}}{2^{l+1}}\right)\right]
\]
in \eqref{eq:zeta} converges to $0$ as $p\to+\infty.$ This sum can be expressed, using
the integral representation \cite[25.11.35]{NIST}
\[
\zeta\left(\alpha,\frac{a}{2}\right)-\zeta\left(\alpha,\frac{a+1}{2}\right)=\frac{2^{\alpha}}{\Gamma\left(\alpha\right)}\int_{0}^{+\infty}\frac{x^{\alpha-1}}{1+e^{-x}}e^{-ax}dx,
\]
as follows:
\[
\frac{1}{\Gamma\left(\alpha\right)}\int_{0}^{+\infty}\frac{x^{\alpha-1}}{1+e^{-x}}\sum_{l=0}^{p-1}2^{-\alpha l}e^{-\frac{1}{2^{l}}\left(z+2^{p}\right)x}dx.
\]
The sum can be upper-bounded as follows
\[
\sum_{l=0}^{p-1}2^{-\alpha l}e^{-\frac{1}{2^{l}}\left(z+2^{p}\right)x}\le\sum_{l=0}^{p-1}2^{-\alpha l}e^{-2^{p-l}x}.
\]
A study of the function
\[
h\left(u\right)=2^{-\alpha u}e^{-x \cdot 2^{p-u}}=e^{-\alpha u\log2-x2^{p-u}}
\]
shows that it satisfies
\[
h'\left(u\right)=h\left(u\right)\left(-\alpha\log2-x2^{p-u}\log2\right).
\]
It is positive on $u\in\left[0,p\right]$ and its unique maximum value
over this interval can be computed as follows:
\[
\max_{u\in\left[0,p\right]}h\left(u\right)=h\left(p-\frac{\log\left(\alpha/x\right)}{\log2}\right)=2^{\alpha\left(\frac{\log\left(\alpha/x\right)}{\log2}-p\right)}e^{-x \cdot 2^{\frac{\log\left(\alpha/x\right)}{\log2}}}\le K \cdot 2^{-\alpha p},
\]
where the positive constant $K$ depends on $x$ and $\alpha$ only.
Therefore, we have the bound
\[
\sum_{l=0}^{p-1}2^{-\alpha l}e^{-\frac{1}{2^{l}}\left(z+2^{p}\right)x}\le Kp2^{-\alpha p}.
\]
Since $\alpha>0,$ this bound goes to $0$ with $p,$ which proves
the result. The case of an arbitrary value of $b$ can be proved accordingly.
\end{proof}
\begin{cor}
The series
\begin{align}
J_{\infty}^{\left(b\right)}\left(x\right) & =\sum_{n=1}^{+\infty}\frac{s_{b}\left(n\right)}{\left(x+n\right)\left(x+n+1\right)}\label{eq:Jinfty(x)}
\end{align}
can be expressed as follows
%or equivalently as follows
%\begin{equation}
%%J_{\infty}^{\left(b\right)}\left(x\right)=\sum_{l\ge0}\frac{1}{2^{l}}\beta\left(\frac{x}{2^{l}}+1\right)
%J_{\infty}^{\left(b\right)}\left(x\right)= \frac{b}{b-1} \log b
%+ \sum_{l\ge0} \frac{1}{b^{l+1}} 
%\left[
%\left(b-1\right)\psi\left(1+\frac{x}{b^{l+1}}\right)+\psi\left(\frac{x}{b^{l+1}}\right)-b\psi\left(\frac{x}{b^l}\right)
%\right]
%\label{eq:Jinftybeta2}
%\end{equation}
\begin{equation}
%J_{\infty}^{\left(b\right)}\left(x\right)=\sum_{l\ge0}\frac{1}{2^{l}}\beta\left(\frac{x}{2^{l}}+1\right)
J_{\infty}^{\left(b\right)}\left(x\right)= \frac{b}{b-1} \log b
+ \sum_{l\ge0} \frac{1}{b^{l}} 
\left[
\psi\left(\frac{x}{b^{l+1}}\right)-\psi\left(\frac{x}{b^{l}}\right)+\frac{b-1}{x}
\right].
\label{eq:Jinftybeta3}
\end{equation}
It has a Taylor expansion at $x=0$ given by
\begin{align}
\label{eq:JinftyTaylor}
%J_{\infty}^{\left(b\right)}\left(x\right) & =2\log2-2\sum_{n\ge1}\frac{1-2^{n}}{1-2^{n+1}}\frac{\psi^{\left(n\right)}\left(1\right)}{n!}x^{n}\\
% & =2\log2+\sum_{n\ge1}\left(-1\right)^{n}\zeta\left(n+1\right)\left(\frac{2^{n+1}-2}{2^{n+1}-1}\right)x^{n}.
J_{\infty}^{\left(b\right)}\left(x\right)&=\frac{b}{b-1}\log b-b\sum_{n\ge1}\frac{1-b^{n}}{1-b^{n+1}}\frac{\psi^{\left(n\right)}\left(1\right)}{n!}x^{n}\\&=\frac{b}{b-1}\log b+\sum_{n\ge1}\left(-1\right)^{n}\zeta\left(n+1\right)\frac{b^{n+1}-b}{b^{n+1}-1}x^{n}.
\end{align}
\end{cor}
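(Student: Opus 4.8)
The plan is to pass to the limit $p\to\infty$ in the $\alpha=1$ identity of Thm \ref{thm:Thm1-1} (with $z$ renamed to $x$). Reindexing its second sum by $l\mapsto l+1$, so that $b/b^{l+1}=1/b^{l}$, lets me merge both sums into a single one over $0\le l\le p-1$ with common prefactor $b^{-l}$, giving $J_{b^{p}-1}^{(b)}(x)=A_{p}+B_{p}$ with
\[
A_{p}=\sum_{l=0}^{p-1}\frac{1}{b^{l}}\left[\psi\left(1+\frac{x+b^{p}}{b^{l}}\right)-\psi\left(1+\frac{x+b^{p}}{b^{l+1}}\right)\right],\qquad
B_{p}=\sum_{l=0}^{p-1}\frac{1}{b^{l}}\left[\psi\left(1+\frac{x}{b^{l+1}}\right)-\psi\left(1+\frac{x}{b^{l}}\right)\right].
\]
Since $\psi(1+x/b^{l+1})-\psi(1+x/b^{l})\to 0$ as $l\to\infty$ while the prefactor decays geometrically, $B_{p}$ converges absolutely to $B_{\infty}=\sum_{l\ge0}b^{-l}[\psi(1+x/b^{l+1})-\psi(1+x/b^{l})]$.

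First I would evaluate $\lim_{p\to\infty}A_{p}$, which is the heart of the argument. Although each $\psi$-value diverges as $p\to\infty$, I claim each bracket tends to $\log b$: for fixed $l$ the two arguments tend to $+\infty$ with ratio tending to $b$, so by $\psi(s)=\log s+O(1/s)$ the difference tends to $\log b$. To interchange this limit with the lengthening sum I would invoke Tannery's theorem; writing $\psi(s)-\psi(t)=\int_{t}^{s}\psi'(u)\,du$ and using $\psi'(u)<u^{-1}+u^{-2}$ for $u\ge1$ yields the uniform bound $b^{-l}(\log b+1)$, which is summable in $l$. Hence
\[
\lim_{p\to\infty}A_{p}=\sum_{l\ge0}\frac{\log b}{b^{l}}=\frac{b}{b-1}\log b,
\]
and combining the two limits gives $J_{\infty}^{(b)}(x)=\frac{b}{b-1}\log b+B_{\infty}$.

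To reach \eqref{eq:Jinftybeta3} I would apply the reflection $\psi(1+t)=\psi(t)+1/t$ to both $\psi$-terms of $B_{\infty}$: this replaces $\psi(1+x/b^{l+1})-\psi(1+x/b^{l})$ by $\psi(x/b^{l+1})-\psi(x/b^{l})+b^{l}(b-1)/x$, so after multiplying by $b^{-l}$ each summand acquires the announced $(b-1)/x$ correction (the summand must be kept grouped as $b^{-l}[\psi(x/b^{l+1})-\psi(x/b^{l})]+(b-1)/x$ for the series to converge, since $b^{-l}[\psi(x/b^{l+1})-\psi(x/b^{l})]\to-(b-1)/x$). For the Taylor expansion \eqref{eq:JinftyTaylor} I would instead telescope $B_{\infty}$ via $m=l+1$ into the closed form $B_{\infty}=(b-1)\sum_{l\ge1}b^{-l}\psi(1+x/b^{l})-\psi(1+x)$, then insert $\psi(1+y)=-\gamma+\sum_{n\ge1}(-1)^{n+1}\zeta(n+1)y^{n}$. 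The $-\gamma$ contributions cancel between the two pieces, the inner geometric sums evaluate as $\sum_{l\ge1}b^{-l(n+1)}=1/(b^{n+1}-1)$, and collecting the coefficient of $x^{n}$ gives $(-1)^{n}\zeta(n+1)(b^{n+1}-b)/(b^{n+1}-1)$; absolute convergence of the double series for $|x|<1$ justifies the rearrangement, and the equivalent form with $\psi^{(n)}(1)/n!$ follows from $\psi^{(n)}(1)=(-1)^{n+1}n!\,\zeta(n+1)$.

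The main obstacle is the evaluation of $\lim_{p\to\infty}A_{p}$: the individual digamma values blow up logarithmically, so the cancellation producing the finite constant $\frac{b}{b-1}\log b$ must be extracted carefully, and the interchange of limit and summation over the growing range $0\le l\le p-1$ requires the uniform Tannery bound rather than a naive term-by-term passage.
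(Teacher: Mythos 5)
Your proposal is correct, and it is a fleshed-out version of one of the two routes the paper merely gestures at ("the limit case $p\to\infty$ followed by elementary simplifications in Thm \ref{thm:Thm1-1}"), but it differs in a useful way from the route the paper actually follows. The paper first establishes the intermediate double-sum representation \eqref{eq:Jinftybeta} and then applies the digamma multiplication theorem to reach \eqref{eq:Jinftybeta3}; you instead start from the already-simplified $\alpha=1$ single-sum form of Thm \ref{thm:Thm1-1} (where the multiplication formula has been absorbed) and pass to the limit directly. What your version buys is precisely the analytic content the paper suppresses: the split $J_{b^p-1}^{(b)}=A_p+B_p$, the observation that each bracket of $A_p$ tends to $\log b$ even though the individual $\psi$-values diverge, and the Tannery bound $b^{-l}(\log b+1)$ justifying the interchange over the growing range $0\le l\le p-1$. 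Your telescoping of $B_\infty$ into $(b-1)\sum_{l\ge1}b^{-l}\psi(1+x/b^l)-\psi(1+x)$ also makes the Taylor expansion a genuinely routine substitution of $\psi(1+y)=-\gamma+\sum_{n\ge1}(-1)^{n+1}\zeta(n+1)y^n$, whereas the paper simply asserts it is "a direct consequence." Finally, your remark about the grouping of the summand is not pedantry but a correction: as printed, \eqref{eq:Jinftybeta3} places the term $\frac{b-1}{x}$ inside the bracket multiplied by $b^{-l}$, and since $b^{-l}\left[\psi\left(x/b^{l+1}\right)-\psi\left(x/b^{l}\right)\right]\to-\frac{b-1}{x}$ as $l\to\infty$, the series with that literal grouping has terms tending to $-\frac{b-1}{x}\neq 0$ and diverges; the convergent (and correct) form is
\begin{equation*}
J_{\infty}^{(b)}(x)=\frac{b}{b-1}\log b+\sum_{l\ge0}\left(\frac{1}{b^{l}}\left[\psi\left(\frac{x}{b^{l+1}}\right)-\psi\left(\frac{x}{b^{l}}\right)\right]+\frac{b-1}{x}\right),
\end{equation*}
which is exactly what your reflection-formula step produces.
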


\begin{proof}
The identity 
\begin{equation}
%J_{\infty}^{\left(b\right)}\left(x\right)=\sum_{l\ge0}\frac{1}{2^{l}}\beta\left(\frac{x}{2^{l}}+1\right)
J_{\infty}^{\left(b\right)}\left(x\right)=\sum_{l\ge0}\frac{1}{b^{l}}\sum_{k=1}^{b-1}\frac{k}{b}\left[\psi\left(\frac{k+1}{b}+\frac{x}{b^{l+1}}\right)-\psi\left(\frac{k}{b}+\frac{x}{b^{l+1}}\right)\right].
\label{eq:Jinftybeta}
\end{equation}
is obtained either as the limit case $\alpha=1$
in Theorem \ref{thm:Thm3}, or as the limit case $p \to \infty$ followed by elementary simplifications in Thm \ref{thm:Thm1-1}.
Using the multiplication theorem for the digamma function in identity \eqref{eq:Jinftybeta} gives the second result. The Taylor expansion is a direct consequence of this identity. 
%Taking the limit $p\to+\infty$ in (\ref{eq:recurrence J2n-1}) gives
%\begin{equation}
%J_{\infty}\left(x\right)-\frac{1}{2}J_{\infty}\left(\frac{x}{2}\right)=\eta_{\infty}\left(x\right)\label{eq:limit recurrence}
%\end{equation}
%with
%\[
%\eta_{\infty}\left(x\right)=\beta\left(x+1\right),
%\]
%since
%\[
%\lim_{N\to+\infty}\beta\left(x+2N+1\right)=0.
%\]
%The Taylor expansion
%\[
%\psi\left(x+1\right)=\sum_{n\ge0}\frac{\psi^{\left(n\right)}\left(1\right)}{n!}x^{n}=-\gamma+\sum_{n\ge1}\left(-1\right)^{n}\zeta\left(n+1\right)z^{n}
%\]
%gives, for Stirling's beta function, 
%\begin{align*}
%\beta\left(x+1\right) & =\frac{1}{2}\psi\left(\frac{x}{2}+1\right)-\frac{1}{2}\psi\left(\frac{x}{2}+\frac{1}{2}\right)=\psi\left(\frac{x}{2}+1\right)-\psi\left(x+1\right)+\log2\\
% & =\log2+\sum_{n\ge0}\frac{\psi^{\left(n\right)}\left(1\right)}{n!}x^{n}\left[\frac{1}{2^{n}}-1\right].
%\end{align*}
%Substituting the Taylor expansion
%\[
%J_{\infty}\left(x\right)=\sum_{n\ge0}\frac{a_{n}}{n!}x^{n}
%\]
%in (\ref{eq:limit recurrence}) gives 
%\[
%\sum_{n\ge0}\frac{a_{n}}{n!}x^{n}\left[1-\frac{1}{2^{n+1}}\right]=\log2+\sum_{n\ge0}\frac{\psi^{\left(n\right)}\left(1\right)}{n!}x^{n}\left[\frac{1}{2^{n}}-1\right],
%\]
%so that
%\[
%a_{0}=2\log2.
%\]
%Similarly, for $n\ge1,$
%\[
%\frac{a_{n}}{n!}=\frac{\psi^{\left(n\right)}\left(1\right)}{n!}\frac{\left(\frac{1}{2}\right)^{n}-1}{1-\left(\frac{1}{2}\right)^{n+1}}=\left(-1\right)^{n+1}\zeta\left(n+1\right)\left(\frac{1}{1-\left(\frac{1}{2}\right)^{n+1}}-2\right).
%\]
% 
\end{proof}
\begin{rem}
The special case $x=0$ in (\ref{eq:JinftyTaylor}) coincides with identity \eqref{logb}.
\end{rem}

Integration of (\ref{eq:Jinftybeta}) yields the following infinite
product:

\begin{thm}
The following identity holds for all $z\in\mathbb{R},$
\begin{equation}
%\prod_{n\ge1}\left(\frac{1+\frac{z}{n}}{1+\frac{z}{n+1}}\right)^{s_{2}\left(n\right)}=\frac{2^{2z}}{\Gamma\left(z+1\right)}\prod_{l\ge1}\Gamma\left(\frac{z}{2^{l}}+1\right). 
\prod_{n\ge1}\left(\frac{1+\frac{z}{n}}{1+\frac{z}{n+1}}\right)^{s_{b}\left(n\right)}=
b^{z\frac{b}{b-1}}\prod_{l\ge0}\frac{\Gamma^{b}\left(1+\frac{z}{b^{l+1}}\right)}{\Gamma\left(1+\frac{z}{b^{l}}\right)}.
%\prod_{l\ge0}\prod_{k=1}^{b-1}\left[\frac{\Gamma\left(\frac{z}{b^{l+1}}+\frac{k+1}{b}\right)}{\Gamma\left(\frac{k+1}{b}\right)}\frac{\Gamma\left(\frac{k}{b}\right)}{\Gamma\left(\frac{z}{b^{l+1}}+\frac{k}{b}\right)}\right]^{k}.
\label{eq:infinite product}
\end{equation}
\end{thm}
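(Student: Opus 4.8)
The plan is to prove \eqref{eq:infinite product} by differentiation (equivalently, by integrating \eqref{eq:Jinftybeta3} as the remark preceding the theorem suggests): I will show that the left-hand side $P(z)$ and the right-hand side $Q(z)$, regarded as functions of $z$, share the logarithmic derivative $J_{\infty}^{(b)}(z)$ and agree at $z=0$. Taking logarithms of the product term by term gives
\[
\log P(z)=\sum_{n\ge1}s_{b}(n)\left[\log\left(1+\frac{z}{n}\right)-\log\left(1+\frac{z}{n+1}\right)\right],
\]
and differentiating under the summation sign yields
\[
\frac{P'(z)}{P(z)}=\sum_{n\ge1}s_{b}(n)\left[\frac{1}{z+n}-\frac{1}{z+n+1}\right]=\sum_{n\ge1}\frac{s_{b}(n)}{(z+n)(z+n+1)}=J_{\infty}^{(b)}(z).
\]
The term-by-term differentiation is legitimate because $s_{b}(n)=O(\log n)$, so the differentiated series is dominated on compact subsets of $z>0$ by a convergent multiple of $\sum (\log n)/n^{2}$.

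For the right-hand side I take logarithms to obtain
\[
\log Q(z)=z\frac{b}{b-1}\log b+\sum_{l\ge0}\left[b\log\Gamma\left(1+\frac{z}{b^{l+1}}\right)-\log\Gamma\left(1+\frac{z}{b^{l}}\right)\right],
\]
and differentiating, using $b\cdot b^{-(l+1)}=b^{-l}$, gives
\[
\frac{Q'(z)}{Q(z)}=\frac{b}{b-1}\log b+\sum_{l\ge0}\frac{1}{b^{l}}\left[\psi\left(1+\frac{z}{b^{l+1}}\right)-\psi\left(1+\frac{z}{b^{l}}\right)\right].
\]
Applying the digamma recurrence $\psi(1+w)=\psi(w)+1/w$ to each bracket converts it into $\psi(z/b^{l+1})-\psi(z/b^{l})+(b-1)b^{l}/z$, so that $\frac{1}{b^{l}}[\,\cdots\,]=\frac{1}{b^{l}}[\psi(z/b^{l+1})-\psi(z/b^{l})]+\frac{b-1}{z}$; summing over $l$ reproduces exactly the closed form \eqref{eq:Jinftybeta3} for $J_{\infty}^{(b)}(z)$. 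Hence $Q'(z)/Q(z)=J_{\infty}^{(b)}(z)$ as well.

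Since $\log P$ and $\log Q$ have the same derivative, they differ by a constant. Evaluating at $z=0$, where $P(0)=1$ because every factor equals $1$, and $Q(0)=1$ because $b^{0}=1$ and $\Gamma(1)=1$, fixes this constant to be $0$, so $P(z)=Q(z)$ for $z\ge0$. To reach all real $z$ as claimed, I note that both sides define meromorphic functions of $z$, the zeros of the left-hand factors at negative integers matching the poles of the $\Gamma$-factors on the right, so the identity propagates by analytic continuation.

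The main obstacle I anticipate is the rigorous justification of interchanging differentiation with the infinite product and the telescoping sum, which requires establishing local uniform convergence on compact subsets of $z>0$ and controlling the $O(\log n)$ growth of $s_{b}(n)$. The algebraic reconciliation of the two digamma representations via $\psi(1+w)=\psi(w)+1/w$ is routine but must be carried out with the $1/z$ contributions tracked carefully, since they recombine across the sum to furnish precisely the $\frac{b-1}{x}$ term appearing in \eqref{eq:Jinftybeta3}.
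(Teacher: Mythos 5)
Your proof is correct, and it rests on the same underlying idea as the paper's: both sides of \eqref{eq:infinite product} are exhibited as the exponential of an antiderivative of $J_{\infty}^{(b)}$. But you run the verification in the opposite direction, and that direction is genuinely leaner. The paper integrates the representation \eqref{eq:Jinftybeta} term by term, which produces a double product of ratios $\Gamma\left(\frac{k}{b}+\frac{z}{b^{l+1}}\right)/\Gamma\left(\frac{k}{b}\right)$, and must then perform an Abel summation on the exponents $k$ followed by the Gauss multiplication formula $\prod_{k=1}^{b-1}B\left(w,\frac{k}{b}\right)=b^{bw-1}\Gamma^{b}(w)/\Gamma(bw)$ to collapse that product into the stated right-hand side. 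You instead differentiate the stated right-hand side logarithmically and match it against the already-simplified form \eqref{eq:Jinftybeta3}, so the multiplication formula has effectively been absorbed at the digamma level and no Gamma-function manipulation is needed; the price is the justification of termwise differentiation of both infinite products, which you supply. Two cautions. First, \eqref{eq:Jinftybeta3} as printed is not literally what your computation produces: in the paper the term $\frac{b-1}{x}$ sits inside the bracket and is therefore weighted by $b^{-l}$, which would make the series diverge (the $l$-th term then tends to $-\frac{b-1}{x}$ as $l\to\infty$); the correct statement, and the one your recurrence $\psi(1+w)=\psi(w)+\frac{1}{w}$ actually yields, has $\frac{b-1}{z}$ contributed once per $l$ without the $b^{-l}$ weight. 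So your claim to ``reproduce exactly'' \eqref{eq:Jinftybeta3} should be read as reproducing its corrected form --- your computation is the right one. Second, your analytic-continuation remark for negative $z$ addresses a point the paper glosses over (integrating $J_{\infty}^{(b)}$ from $0$ to $z<-1$ crosses poles), though at negative integers themselves both sides must be interpreted as limits or as an identity of meromorphic functions rather than a pointwise equality of real numbers.
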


\begin{proof}
Integrating both representations
\[
%J_{\infty}\left(x\right)=\sum_{n=1}^{+\infty}\frac{s_{2}\left(n\right)}{\left(x+n\right)\left(x+n+1\right)}=\sum_{l\ge0}\frac{1}{2^{l}}\beta\left(\frac{x}{2^{l}}+1\right)
J_{\infty}^{\left(b\right)}\left(x\right)=\sum_{n=1}^{+\infty}\frac{s_{b}\left(n\right)}{\left(x+n\right)\left(x+n+1\right)}=\sum_{l\ge0}\frac{1}{b^{l}}\sum_{k=1}^{b-1}\frac{k}{b}\left[\psi\left(\frac{k+1}{b}+\frac{x}{b^{l+1}}\right)-\psi\left(\frac{k}{b}+\frac{x}{b^{l+1}}\right)\right]
\]
gives on one side
\[
\int_{0}^{z}\sum_{n=1}^{+\infty}\frac{s_{b}\left(n\right)}{\left(x+n\right)\left(x+n+1\right)}dx=\log\prod_{n\ge1}\left(\frac{1+\frac{z}{n}}{1+\frac{z}{n+1}}\right)^{s_{b}\left(n\right)}
\]
and on the other side
\begin{align*}
%\int_{0}^{z}\sum_{l\ge0}\frac{1}{2^{l}}\beta\left(\frac{x}{2^{l}}+1\right)dx=\log\prod_{l\ge0}\frac{\Gamma\left(\frac{z}{2^{l+1}}+1\right)}{\frac{\Gamma\left(\frac{z}{2^{l+1}}+\frac{1}{2}\right)}{\Gamma\left(\frac{1}{2}\right)}}.
&\int_{0}^{z}\sum_{l\ge0}\frac{1}{b^{l}}\sum_{k=1}^{b-1}\frac{k}{b}\left[\psi\left(\frac{k+1}{b}+\frac{x}{b^{l+1}}\right)-\psi\left(\frac{k}{b}+\frac{x}{b^{l+1}}\right)\right]dx \\
&= \sum_{l\ge0}\frac{1}{b^{l}}\sum_{k=1}^{b-1}\frac{k}{b} b^{l+1}\log\left(\frac{\Gamma\left(\frac{z}{b^{l+1}}+\frac{k+1}{b}\right)}{\Gamma\left(+\frac{k+1}{b}\right)}\frac{\Gamma\left(\frac{k}{b}\right)}{\Gamma\left(\frac{z}{b^{l+1}}+\frac{k}{b}\right)}\right)\\
&=\log \prod_{l\ge0}\prod_{k=1}^{b-1}\left[\frac{\Gamma\left(\frac{z}{b^{l+1}}+\frac{k+1}{b}\right)}{\Gamma\left(\frac{k+1}{b}\right)}\frac{\Gamma\left(\frac{k}{b}\right)}{\Gamma\left(\frac{z}{b^{l+1}}+\frac{k}{b}\right)}\right]^{k}.
\end{align*}
Taking the exponential of both sides gives
\[
\prod_{n\ge1}\left(\frac{1+\frac{z}{n}}{1+\frac{z}{n+1}}\right)^{s_{b}\left(n\right)}=\prod_{l\ge0}\prod_{k=1}^{b-1}\left[\frac{\Gamma\left(\frac{z}{b^{l+1}}+\frac{k+1}{b}\right)}{\Gamma\left(\frac{k+1}{b}\right)}\frac{\Gamma\left(\frac{k}{b}\right)}{\Gamma\left(\frac{z}{b^{l+1}}+\frac{k}{b}\right)}\right]^{k}.
\]
Denote 
\[
\alpha_{k}=\frac{\Gamma\left(\frac{z}{b^{l+1}}+\frac{k}{b}\right)}{\Gamma\left(\frac{k}{b}\right)}
\]
so that the right-hand side reads
\begin{align*}
\prod_{l\ge0}\prod_{k=1}^{b-1}\left(\frac{\alpha_{k+1}}{\alpha_{k}}\right)^{k} & =\prod_{l\ge0}\frac{\alpha_{2}}{\alpha_{1}}\frac{\alpha_{3}^{2}}{\alpha_{2}^{2}}\cdots\left(\frac{\alpha_{b}}{\alpha_{b-1}}\right)^{b-1}\\
 & =\prod_{l\ge0}\prod_{k=1}^{b-1}\left(\frac{\alpha_{b}}{\alpha_{k}}\right)
\end{align*}
and can be expressed as
\[
\frac{\alpha_{b}}{\alpha_{k}}=\frac{\Gamma\left(\frac{z}{b^{l+1}}+1\right)}{\Gamma\left(1\right)}\frac{\Gamma\left(\frac{k}{b}\right)}{\Gamma\left(\frac{z}{b^{l+1}}+\frac{k}{b}\right)}=\frac{z}{b^{l+1}}B\left(\frac{z}{b^{l+1}},\frac{k}{b}\right)
\]
where $B$ is Euler's beta function. Thus right-hand side is
\[
\prod_{l\ge0}\left(\frac{z}{b^{l+1}}\right)^{b-1}\prod_{k=1}^{b-1}B\left(\frac{z}{b^{l+1}},\frac{k}{b}\right).
\]
Using Euler's duplication formula allows to simplify the product of Beta functions as 
\[
\prod_{k=1}^{b-1}B\left(\frac{z}{b^{l+1}},\frac{k}{b}\right)=b^{b\frac{z}{b^{l+1}}-1}\frac{\Gamma^{b}\left(\frac{z}{b^{l+1}}\right)}{\Gamma\left(b\frac{z}{b^{l+1}}\right)}
\]
so that the right-hand side is
\[
\prod_{l\ge0}\left(\frac{z}{b^{l+1}}\right)^{b-1}b^{\frac{z}{b^{l}}-1}\frac{\Gamma^{b}\left(\frac{z}{b^{l+1}}\right)}{\Gamma\left(\frac{z}{b^{l}}\right)}=\prod_{l\ge0}b^{\frac{z}{b^{l}}}\frac{\Gamma^{b}\left(1+\frac{z}{b^{l+1}}\right)}{\Gamma\left(1+\frac{z}{b^{l}}\right)}=b^{z\frac{b}{b-1}}\prod_{l\ge0}\frac{\Gamma^{b}\left(1+\frac{z}{b^{l+1}}\right)}{\Gamma\left(1+\frac{z}{b^{l}}\right)},
\]
which is the desired result.
%Using the duplication formula
%\[
%\prod_{l\ge0}\frac{\Gamma\left(\frac{z}{2^{l+1}}+\frac{1}{2}\right)}{\Gamma\left(\frac{1}{2}\right)}=\frac{\Gamma\left(z+1\right)}{2^{2z}}
%\]
%allows us to deduce the result.
\end{proof}
Using Legendre's duplication formula for the Gamma function and Knar's formula
\[
\Gamma\left(1+z\right)=2^{2z}\prod_{k\ge1}\frac{\Gamma\left(\frac{1}{2}+\frac{z}{2^{k}}\right)}{\Gamma\left(\frac{1}{2}\right)},
\]
the binary case $b=2$ simplifies as follows:
\begin{cor}
We have
\begin{equation}
\label{eq:infinite product base 2}
\prod_{n\ge1}\left(\frac{1+\frac{z}{n}}{1+\frac{z}{n+1}}\right)^{s_{2}\left(n\right)}=\frac{2^{2z}}{\Gamma\left(z+1\right)}\prod_{l\ge1}\Gamma\left(\frac{z}{2^{l}}+1\right). 
\end{equation}
\end{cor}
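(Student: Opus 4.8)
The plan is to start from the general infinite product \eqref{eq:infinite product} specialized to $b=2$, namely
\[
\prod_{n\ge1}\left(\frac{1+\frac{z}{n}}{1+\frac{z}{n+1}}\right)^{s_{2}\left(n\right)}=2^{2z}\prod_{l\ge0}\frac{\Gamma^{2}\left(1+\frac{z}{2^{l+1}}\right)}{\Gamma\left(1+\frac{z}{2^{l}}\right)},
\]
so that the whole task reduces to showing that $P(z):=\prod_{l\ge0}\Gamma^{2}\left(1+\frac{z}{2^{l+1}}\right)/\Gamma\left(1+\frac{z}{2^{l}}\right)$ equals $\frac{1}{\Gamma\left(z+1\right)}\prod_{l\ge1}\Gamma\left(1+\frac{z}{2^{l}}\right)$. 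Since $\log\Gamma(1+w)=-\gamma w+O(w^{2})$ near $w=0$, the logarithm of each factor appearing below is $O(2^{-l})$, so every product converges absolutely and may be freely reindexed and regrouped; I would record this remark once at the start to license all subsequent rearrangements.

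The route suggested by the preamble is to eliminate the denominator with Legendre's duplication formula. Writing $1+\frac{z}{2^{l}}=2w$ with $w=\frac{1}{2}+\frac{z}{2^{l+1}}$ gives $\Gamma\left(1+\frac{z}{2^{l}}\right)=\pi^{-1/2}\,2^{z/2^{l}}\,\Gamma\left(\frac{1}{2}+\frac{z}{2^{l+1}}\right)\Gamma\left(1+\frac{z}{2^{l+1}}\right)$, so that each factor of $P(z)$ collapses to $\sqrt{\pi}\,2^{-z/2^{l}}\,\Gamma\left(1+\frac{z}{2^{l+1}}\right)/\Gamma\left(\frac{1}{2}+\frac{z}{2^{l+1}}\right)$. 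I would then split the product into three pieces: the powers of two give $\prod_{l\ge0}2^{-z/2^{l}}=2^{-2z}$ via the geometric series $\sum_{l\ge0}2^{-l}=2$; after the shift $m=l+1$ the remaining part is $\prod_{m\ge1}\Gamma\!\left(\frac{1}{2}\right)\Gamma\left(1+\frac{z}{2^{m}}\right)/\Gamma\left(\frac{1}{2}+\frac{z}{2^{m}}\right)$, and Knar's formula, read in the form $\prod_{m\ge1}\Gamma\!\left(\frac{1}{2}\right)/\Gamma\left(\frac{1}{2}+\frac{z}{2^{m}}\right)=2^{2z}/\Gamma(1+z)$, turns this into $\frac{2^{2z}}{\Gamma(1+z)}\prod_{m\ge1}\Gamma\left(1+\frac{z}{2^{m}}\right)$. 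Collecting $2^{-2z}\cdot 2^{2z}=1$ then yields $P(z)=\frac{1}{\Gamma(1+z)}\prod_{m\ge1}\Gamma\left(1+\frac{z}{2^{m}}\right)$, and multiplying back the surviving prefactor $2^{2z}$ produces the claim.

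The main obstacle is really the bookkeeping of the powers of two, since three separate sources of $2^{\pm 2z}$ must be tracked: the factor $2^{-2z}$ created by the geometric series has to cancel exactly the $2^{2z}$ reintroduced by Knar's formula, while the free $2^{2z}$ inherited from \eqref{eq:infinite product} is the one that survives into the final answer. As a cleaner alternative that bypasses both special formulas, I note that $P(z)$ telescopes directly: setting $c_{l}=\Gamma\left(1+\frac{z}{2^{l}}\right)$, the partial product satisfies $\prod_{l=0}^{L}c_{l+1}^{2}/c_{l}=c_{L+1}^{2}\,c_{0}^{-1}\prod_{m=1}^{L}c_{m}$, and since $c_{L+1}=\Gamma\left(1+\frac{z}{2^{L+1}}\right)\to\Gamma(1)=1$ as $L\to\infty$, one obtains $P(z)=\frac{1}{\Gamma(1+z)}\prod_{m\ge1}\Gamma\left(1+\frac{z}{2^{m}}\right)$ at once, in agreement with the Legendre--Knar computation.
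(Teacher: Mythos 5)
Your proposal is correct. Your first route --- specializing \eqref{eq:infinite product} to $b=2$, collapsing each factor $\Gamma^{2}\left(1+\frac{z}{2^{l+1}}\right)/\Gamma\left(1+\frac{z}{2^{l}}\right)$ with Legendre's duplication formula, and then invoking Knar's formula in the form $\prod_{m\ge1}\Gamma\left(\tfrac12\right)/\Gamma\left(\tfrac12+\frac{z}{2^{m}}\right)=2^{2z}/\Gamma(1+z)$ --- is exactly the derivation the paper intends: the text merely asserts that the binary case ``simplifies'' via these two formulas, without carrying out the bookkeeping of the three $2^{\pm 2z}$ factors that you track explicitly. Your telescoping alternative is a genuine improvement: writing the partial product as $c_{L+1}^{2}\,c_{0}^{-1}\prod_{m=1}^{L}c_{m}$ with $c_{l}=\Gamma\left(1+\frac{z}{2^{l}}\right)$ and letting $c_{L+1}\to\Gamma(1)=1$ gives the identity at once, needs neither the duplication formula nor Knar's formula, and makes the convergence and regrouping issues transparent (the estimate $\log\Gamma(1+w)=-\gamma w+O(w^{2})$ does all the work); in effect it shows that for $b=2$ the Knar-type machinery is a detour. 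The only caveat, which applies equally to the paper's statement, is that $z$ should be restricted so that no $1+\frac{z}{2^{l}}$ is a nonpositive integer (for instance $z>-1$), so that none of the Gamma factors degenerates.
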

Considering this identity with special values of $z$ gives the following result.
\begin{cor}
We have %
\begin{comment}
Mathematica Checked
\end{comment}
\begin{equation}
\prod_{n\ge1}\left(\frac{1+\frac{1}{n}}{1+\frac{1}{n+1}}\frac{1+\frac{1}{2n+2}}{1+\frac{1}{2n}}\right)^{s_{2}\left(n\right)}=\frac{\pi}{2}.\label{eq:p=00003D0}
\end{equation}
\end{cor}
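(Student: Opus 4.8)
The plan is to recognize the target product as a ratio of two instances of the base-$2$ infinite product \eqref{eq:infinite product base 2}, evaluated at $z=1$ and $z=1/2$. Writing $P(z)$ for the left-hand side of \eqref{eq:infinite product base 2}, I first observe that the two bracketed factors inside the target product are precisely the building blocks of $P$ at these two values of $z$. Indeed, the first factor $\frac{1+1/n}{1+1/(n+1)}$ is the $n$-th factor of $P(1)$, while the second factor can be rewritten, using $\frac{1}{2n}=\frac{1/2}{n}$ and $\frac{1}{2n+2}=\frac{1/2}{n+1}$, as
\[
\frac{1+\frac{1}{2n+2}}{1+\frac{1}{2n}}=\left(\frac{1+\frac{1/2}{n}}{1+\frac{1/2}{n+1}}\right)^{-1},
\]
which is the reciprocal of the $n$-th factor of $P(1/2)$. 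Raising to the power $s_2(n)$ and multiplying over $n$ then identifies the target product with the quotient $P(1)/P(1/2)$.

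The second step is to evaluate each factor using the closed form in \eqref{eq:infinite product base 2}. I would substitute $z=1$ to obtain $P(1)=4\prod_{l\ge1}\Gamma(1+2^{-l})$, using $\Gamma(2)=1$, and $z=1/2$ to obtain $P(1/2)=\frac{4}{\sqrt{\pi}}\prod_{l\ge1}\Gamma(1+2^{-(l+1)})$, using $\Gamma(3/2)=\sqrt{\pi}/2$. The key simplification is that the two infinite products differ only by a shift of the summation index: reindexing $m=l+1$ in the product for $P(1/2)$ turns it into $\prod_{m\ge2}\Gamma(1+2^{-m})$, which coincides with the tail of the product for $P(1)$ once its $l=1$ term $\Gamma(3/2)$ is split off.

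Forming the quotient, the entire infinite product cancels and only the finitely many prefactors survive:
\[
\frac{P(1)}{P(1/2)}=\frac{4\,\Gamma(3/2)}{4/\sqrt{\pi}}=\Gamma(3/2)\sqrt{\pi}=\frac{\sqrt{\pi}}{2}\,\sqrt{\pi}=\frac{\pi}{2},
\]
which is the claimed value. I do not anticipate a genuine obstacle: once the product is recognized as $P(1)/P(1/2)$, convergence of each factor is guaranteed by the previous corollary, and the cancellation of the infinite products is a purely mechanical index shift. The only point requiring a little care is the algebraic rewriting of the second factor as the reciprocal of a $P(1/2)$-factor, which is where the whole identity originates.
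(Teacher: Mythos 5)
Your proposal is correct and follows essentially the same route as the paper: evaluating \eqref{eq:infinite product base 2} at $z=1$ and $z=\frac{1}{2}$ and dividing the two identities, with the infinite products of Gamma factors cancelling after an index shift. The computations of the prefactors $4\Gamma(3/2)$ and $4/\sqrt{\pi}$ and the final value $\pi/2$ all check out.
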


\begin{proof}
Evaluating (\ref{eq:infinite product base 2}) at $z=1$ gives %
\begin{comment}
Mathematica Checked
\end{comment}
\[
\prod_{n\ge1}\left(\frac{1+\frac{1}{n}}{1+\frac{1}{n+1}}\right)^{s_{2}\left(n\right)}=4\prod_{l\ge1}\Gamma\left(\frac{1}{2^{l}}+1\right),
\]
while at $z=\frac{1}{2}$ we obtain %
\begin{comment}
Mathematica Checked
\end{comment}
\[
\prod_{n\ge1}\left(\frac{1+\frac{1}{2n}}{1+\frac{1}{2n+2}}\right)^{s_{2}\left(n\right)}=\frac{4}{\sqrt{\pi}}\prod_{l\ge1}\Gamma\left(\frac{1}{2^{l+1}}+1\right).
\]
Dividing the former identity by the latter gives the result.
\end{proof}
More generally, evaluating (\ref{eq:infinite product}) at $z=\frac{1}{2^{p}}$
with $p$ a positive integer gives %
\begin{comment}
Mathematica Checked
\end{comment}
\[
\prod_{n\ge1}\left(\frac{1+\frac{1}{n \cdot 2^{p}}}{1+\frac{1}{\left(n+1\right) \cdot 2^{p}}}\right)^{s_{2}\left(n\right)}=\frac{2^{2^{1-p}}}{\Gamma\left(\frac{1}{2^{p}}+1\right)}\prod_{l\ge1}\Gamma\left(\frac{1}{2^{p+l}}+1\right),
\]
from which we deduce %
\begin{comment}
Mathematica Checked
\end{comment}
\begin{align*}
\prod_{n\ge1}\left(\frac{1+\frac{1}{n}}{1+\frac{1}{n+1}}\frac{1+\frac{1}{\left(n+1\right)2^{p}}}{1+\frac{1}{n \cdot 2^{p}}}\right)^{s_{2}\left(n\right)} & =2^{2-2^{1-p}}\Gamma\left(\frac{1}{2^{p}}+1\right)\prod_{1\le l\le p}\Gamma\left(\frac{1}{2^{l}}+1\right)
\end{align*}
and %
\begin{comment}
Mathematica Checked
\end{comment}
\[
\prod_{n\ge1}\left(\frac{1+\frac{1}{n \cdot 2^{p}}}{1+\frac{1}{\left(n+1\right)2^{p}}}\frac{1+\frac{1}{\left(n+1\right)2^{p+1}}}{1+\frac{1}{n \cdot 2^{p+1}}}\right)^{s_{2}\left(n\right)}=2^{2^{-p}}\frac{\Gamma^{2}\left(1+\frac{1}{2^{p+1}}\right)}{\Gamma\left(1+\frac{1}{2^{p}}\right)}.
\]
The case $p=0$ is (\ref{eq:p=00003D0}) and the case $p=1$ is %
\begin{comment}
Mathematica Checked
\end{comment}
\begin{align*}
\prod_{n\ge1}\left(\frac{1+\frac{1}{2n}}{1+\frac{1}{2n+2}}\frac{1+\frac{1}{4n+4}}{1+\frac{1}{4n}}\right)^{s_{2}\left(n\right)} & =2\sqrt{\frac{2}{\pi}}\Gamma^{2}\left(\frac{5}{4}\right)=\frac{\pi}{2}\prod_{k\ge1}\tanh^{2}\left(k\frac{\pi}{2}\right),
\end{align*}
where the infinite product representation is obtained as a consequence
of the infinite product representation
\[
\prod_{k\ge1}\tanh\left(k\frac{\pi}{2}\right)=\left(\frac{\pi}{2}\right)^{\frac{1}{4}}\frac{1}{\Gamma\left(\frac{3}{4}\right)}
\]
and the duplication formula which links $\Gamma\left(\frac{1}{4}\right),\thinspace\thinspace\Gamma\left(\frac{3}{4}\right)$
and $\Gamma\left(\frac{1}{2}\right).$ 

As a consequence of the identity \cite[Table 3, (iii)]{Borwein Zucker}
\[
\Gamma\left(\frac{1}{4}\right)=2\pi^{\frac{1}{4}}\sqrt{K\left(\frac{1}{\sqrt{2}}\right)},
%%=2\pi^{\frac{1}{4}}\sqrt{K\left(\frac{\theta_{2}^{2}\left(e^{-\pi}\right)}{\theta_{3}^{2}\left(e^{-\pi}\right)}\right)},
\]
where $K\left(k\right)$ is the complete elliptic integral of the
first kind
\[
K\left(k\right) = \int_{0}^{\frac{\pi}{2}}\frac{dt}{\sqrt{1-k^2 \sin^2 t}},
\]
we also have
\[
\prod_{n\ge1}\left(\frac{1+\frac{1}{2n}}{1+\frac{1}{2n+2}}\frac{1+\frac{1}{4n+4}}{1+\frac{1}{4n}}\right)^{s_{2}\left(n\right)}=\frac{1}{\sqrt{2}}K\left(\frac{1}{\sqrt{2}}\right).
\]
The case $p=2$ is
\[
\prod_{n\ge1}\left(\frac{1+\frac{1}{4n}}{1+\frac{1}{4n+4}}\frac{1+\frac{1}{8n+8}}{1+\frac{1}{8n}}\right)^{s_{2}\left(n\right)}=2^{\frac{1}{4}}\frac{\Gamma^{2}\left(1+\frac{1}{8}\right)}{\Gamma\left(1+\frac{1}{4}\right)}.
\]
\begin{rem}
Taking the $p-$th derivative at $x=0$ in (\ref{eq:Jinfty(x)})
gives
\[
J_{\infty}^{\left(p\right)}\left(0\right)=\left(-1\right)^{p}p!\sum_{n\ge1}s_{2}\left(n\right)\left[\frac{1}{n^{p+1}}-\frac{1}{\left(n+1\right)^{p+1}}\right],
\]
which is also equal, by (\ref{eq:JinftyTaylor}), to 
\[
J_{\infty}^{\left(p\right)}\left(0\right)=-2\frac{1-2^{p}}{1-2^{p+1}}\psi^{\left(p\right)}\left(1\right).
\]
Therefore, with
\[
\frac{\psi^{\left(p\right)}\left(1\right)}{p!}=\left(-1\right)^{p+1}\zeta\left(p+1\right),
\]
and after replacing $p+1$ with $p,$ we recover the well-known result 
\cite{Allouche Shallit-1}
\[
\sum_{n\ge1}s_{2}\left(n\right)\left[\frac{1}{n^{p}}-\frac{1}{\left(n+1\right)^{p}}\right]=\frac{2^{1-p}-1}{2^{-p}-1}\sum_{n\ge1}\frac{1}{n^{p}}.
\]
\end{rem}

\section{Lambert series}

\subsection{Inverse Laplace transform}

The inverse Laplace transform provides variations of the previous
identities. We use the inverse Laplace formulas
\[
\mathcal{L}^{-1}\left[\frac{1}{p+a}\right]=e^{-au},
\]
and \cite[Entry 3.1.4.2]{Prudnikov Laplace}
\[
\mathcal{L}^{-1}\left[\psi\left(ap+b\right)-\psi\left(ap+c\right)\right]=\frac{1}{a}\frac{e^{-\frac{c}{a}u}-e^{-\frac{b}{a}u}}{1-e^{-\frac{u}{a}}}.
\]
Taking the inverse Laplace transform of (\ref{eq:Jinftybeta}) gives
\[
%2\sinh\left(\frac{u}{2}\right)\sum_{n\ge1}s_{2}\left(n\right)e^{-\left(n+\frac{1}{2}\right)u}=\sum_{l\ge0}\frac{e^{-u \cdot 2^{l-1}}}{2\cosh\left(u \cdot 2^{l-1}\right)},\thinspace\thinspace u\ge1.
\sum_{n=1}^{+\infty}s_{b}\left(n\right)\left[e^{-nu}-e^{-\left(n+1\right)u}\right] = \sum_{l\ge0}\frac{1}{b^{l}}\sum_{k=1}^{b-1}\frac{k}{b}b^{l+1}\frac{e^{-kb^{l}u}-e^{-\left(k+1\right)b^{l}u}}{1-e^{-ub^{l+1}}}
.\]
Denoting $z=e^{-u}$ and simplifying gives the following result:
\begin{thm}
\label{thm:A-Lambert-series}A generating function for the sequence $s_{b}\left(n\right)$ is
\begin{comment}
Mathematica Checked
\end{comment}
\begin{align}
%\sum_{n\ge1}s_{2}\left(n\right)z^{n}=\frac{1}{1-z}\sum_{l\ge0}\frac{z^{2^{l}}}{z^{2^{l}}+1}.
\sum_{n\ge1}s_{b}\left(n\right)z^{n} & =\frac{1}{1-z}\sum_{l\ge0}\frac{z^{b^{l}}+2z^{2b^{l}}+\cdots+\left(b-1\right)z^{\left(b-1\right)b^{l}}}{1+z^{b^{l}}+z^{2b^{l}}+\cdots+z^{\left(b-1\right)b^{l}}}\\
 & =\frac{1}{1-z}\sum_{l\ge0}\frac{z^{b^{l}}-bz^{b^{l+1}}+\left(b-1\right)z^{\left(b+1\right)b^{l}}}{\left(1-z^{b^{l}}\right)\left(1-z^{b^{l+1}}\right)}. \nonumber
\end{align}
\end{thm}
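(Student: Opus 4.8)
The plan is to take the inverse-Laplace identity displayed immediately above the theorem as the starting point and carry out the substitution $z=e^{-u}$ together with the promised simplification. First I would record the two consequences of setting $z=e^{-u}$. On the left-hand side, the pair of exponentials collapses to
\[
\sum_{n\ge1}s_{b}\left(n\right)\left(z^{n}-z^{n+1}\right)=\left(1-z\right)\sum_{n\ge1}s_{b}\left(n\right)z^{n},
\]
while on the right each $e^{-mb^{l}u}$ becomes $z^{mb^{l}}$. The numerical prefactor there is $\frac{1}{b^{l}}\cdot\frac{k}{b}\cdot b^{l+1}=k$, so the $l$-dependence of the coefficient disappears entirely and the right-hand side reads
\[
\sum_{l\ge0}\frac{1}{1-z^{b^{l+1}}}\sum_{k=1}^{b-1}k\left(z^{kb^{l}}-z^{\left(k+1\right)b^{l}}\right).
\]

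Next I would evaluate the inner sum over $k$ by writing $w=z^{b^{l}}$. Since each term $kw^{k+1}$ is exactly $w$ times $kw^{k}$, one gets the telescoping-type identity $\sum_{k=1}^{b-1}k\left(w^{k}-w^{k+1}\right)=\left(1-w\right)\sum_{k=1}^{b-1}kw^{k}$, and the factorization $1-w^{b}=\left(1-w\right)\sum_{k=0}^{b-1}w^{k}$ lets me cancel $1-w$ against the denominator $1-z^{b^{l+1}}=1-w^{b}$. Recombining with the factor $\left(1-z\right)^{-1}$ from the left-hand side produces the first displayed form of the theorem, with numerator $\sum_{k=1}^{b-1}kw^{k}$ and denominator $\sum_{k=0}^{b-1}w^{k}$. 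To reach the more compact second form I would substitute the closed evaluation
\[
\sum_{k=1}^{b-1}kw^{k}=\frac{w\left(1-bw^{b-1}+\left(b-1\right)w^{b}\right)}{\left(1-w\right)^{2}},
\]
obtained by differentiating the finite geometric series $\sum_{k=0}^{b-1}w^{k}=\frac{1-w^{b}}{1-w}$; clearing denominators turns the $l$-th summand into $\frac{w-bw^{b}+\left(b-1\right)w^{b+1}}{\left(1-w\right)\left(1-w^{b}\right)}$, which in the variable $z$ is precisely the second form since $w^{b}=z^{b^{l+1}}$ and $w^{b+1}=z^{\left(b+1\right)b^{l}}$.

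The one step requiring genuine care, as opposed to bookkeeping, is the justification that $\mathcal{L}^{-1}$ commutes with the infinite sum over $n$ (and with the double sum over $l$ and $k$) used to pass from \eqref{eq:Jinftybeta} to the exponential identity. For $u>0$ we have $\left|z\right|=e^{-u}<1$, so both $\sum_{n}s_{b}\left(n\right)z^{n}$ and the double series on the right converge absolutely and uniformly on $\left[u_{0},\infty\right)$ for any $u_{0}>0$; this uniform convergence legitimizes the term-by-term application of the two transform formulas quoted before the theorem. Everything after that interchange is the purely algebraic reduction described above, so the main obstacle is confined to this analytic point, while the remaining work is routine telescoping together with a single differentiation of a geometric series.
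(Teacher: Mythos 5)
Your proposal is correct and follows essentially the same route as the paper, which likewise obtains the theorem by setting $z=e^{-u}$ in the inverse-Laplace identity preceding the statement and "simplifying"; you have merely written out the telescoping over $k$, the cancellation of $1-w$ against $1-w^{b}$, and the differentiated geometric series that the paper leaves implicit. Your remark on justifying the term-by-term transform is a point the paper does not address, but it does not change the argument.
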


%it appears as Pb. 41, Section 7.1.3 in \cite[page 187]{Knuth}
This result is not new \cite[Thm 1]{Adams watters}. The binary case is interesting since it involves a Lambert series:
\begin{equation}
\label{eq:Lambert infinite}
\sum_{n\ge1}s_{2}\left(n\right)z^{n}=\frac{1}{1-z}\sum_{l\ge0}\frac{z^{2^{l}}}{z^{2^{l}}+1}.
\end{equation}
%\begin{align*}
%\sum_{n\ge1}s_{b}\left(n\right)z^{n} & =\frac{1}{1-z}\sum_{l\ge0}\frac{z^{b^{l}}+2z^{2b^{l}}+\cdots+\left(b-1\right)z^{\left(b-1\right)b^{l}}}{1+z^{b^{l}}+z^{2b^{l}}+\cdots+z^{\left(b-1\right)b^{l}}}\\
% & =\frac{1}{1-z}\sum_{l\ge0}\frac{z^{b^{l}}-bz^{b^{l+1}}+\left(b-1\right)z^{\left(b+1\right)b^{l}}}{\left(1-z^{b^{l}}\right)\left(1-z^{b^{l+1}}\right)}.
%\end{align*}

Analogously, taking the inverse Laplace of the finite generating function
(\ref{eq:zeta}) gives a finite version of the previous result.
\begin{thm}
The finite generating function of the sequence $s_{2}\left(n\right)$
can be expressed as the finite Lambert series
\begin{equation}
%\sum_{n=1}^{2^{p}-1}s_{2}\left(n\right)z^{n}=\frac{1-z^{2^{p}}}{1-z}%\sum_{l=0}^{p-1}\frac{z^{2^{l}}}{1+z^{2^{l}}}.
\label{eq:Lambert finite}
\sum_{n\ge1}^{b^p-1}s_{b}\left(n\right)z^{n} 
%& =\frac{1}{1-z}\sum_{l\ge0}\frac{z^{b^{l}}+2z^{2b^{l}}+\cdots+\left(b-1\right)z^{\left(b-1\right)b^{l}}}{1+z^{b^{l}}+z^{2b^{l}}+\cdots+z^{\left(b-1\right)b^{l}}}\\
 =\frac{1-z^{b^{p}}}{1-z}\sum_{l\ge0}\frac{z^{b^{l}}-bz^{b^{l+1}}+\left(b-1\right)z^{\left(b+1\right)b^{l}}}{\left(1-z^{b^{l}}\right)\left(1-z^{b^{l+1}}\right)}.
\end{equation}
The binary case is
\[
\sum_{n\ge1}^{2^p-1}s_{2}\left(n\right)z^{n} 
=\frac{1-z^{2^{p}}}{1-z}\sum_{l=0}^{p-1}\frac{z^{2^{l}}}{1+z^{2^{l}}}.
\]
\end{thm}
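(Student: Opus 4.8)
The plan is to mimic the derivation of Theorem \ref{thm:A-Lambert-series}, but to start from the \emph{finite} identity of Theorem \ref{thm:Thm1-1} in the case $\alpha=1$ rather than from its asymptotic counterpart \eqref{eq:Jinftybeta}; this is the only $\alpha$ for which the summand is amenable to the two inverse-Laplace formulas recorded in the paper. Treating the variable of that identity as the Laplace-domain variable $s$, I would apply $\mathcal{L}^{-1}$ term by term. On the left-hand side, the partial fraction decomposition $\frac{1}{(s+n)(s+n+1)}=\frac{1}{s+n}-\frac{1}{s+n+1}$ together with $\mathcal{L}^{-1}[1/(s+a)]=e^{-au}$ gives $\sum_{n=1}^{b^{p}-1}s_b(n)\bigl(e^{-nu}-e^{-(n+1)u}\bigr)$, which after the substitution $z=e^{-u}$ becomes $(1-z)\sum_{n=1}^{b^{p}-1}s_b(n)z^{n}$.

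For the right-hand side I would invoke $\mathcal{L}^{-1}[\psi(as+\beta)-\psi(as+\gamma)]=\frac1a\frac{e^{-\gamma u/a}-e^{-\beta u/a}}{1-e^{-u/a}}$. Each bracket of Theorem \ref{thm:Thm1-1} has the shape $\psi\bigl(1+b^{p-l}+\tfrac{s}{b^{l}}\bigr)-\psi\bigl(1+\tfrac{s}{b^{l}}\bigr)$, so taking $a=1/b^{l}$, $\gamma=1$, $\beta=1+b^{p-l}$, and using the prefactors $1/b^{l}$ (resp.\ $b/b^{l}$) to cancel the factor $1/a=b^{l}$, every such term collapses after $z=e^{-u}$ to $\frac{z^{b^{l}}-z^{b^{l}+b^{p}}}{1-z^{b^{l}}}=(1-z^{b^{p}})\frac{z^{b^{l}}}{1-z^{b^{l}}}$ (resp.\ $b$ times this). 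Factoring out the common $1-z^{b^{p}}$, the identity reads
\[
(1-z)\sum_{n=1}^{b^{p}-1}s_b(n)z^{n}=(1-z^{b^{p}})\left[\sum_{l=0}^{p-1}\frac{z^{b^{l}}}{1-z^{b^{l}}}-b\sum_{l=1}^{p}\frac{z^{b^{l}}}{1-z^{b^{l}}}\right].
\]

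It remains to recombine the two finite sums. Shifting $l\mapsto l+1$ in the second sum aligns both over $l=0,\dots,p-1$, producing $\sum_{l=0}^{p-1}\bigl[\frac{z^{b^{l}}}{1-z^{b^{l}}}-b\frac{z^{b^{l+1}}}{1-z^{b^{l+1}}}\bigr]$; placing each summand over the common denominator $(1-z^{b^{l}})(1-z^{b^{l+1}})$ gives the numerator $z^{b^{l}}-bz^{b^{l+1}}+(b-1)z^{b^{l}+b^{l+1}}$, and since $b^{l}+b^{l+1}=(b+1)b^{l}$ this is exactly the numerator of \eqref{eq:Lambert finite}. Dividing by $1-z$ finishes the general case, and the binary case follows by writing $w=z^{2^{l}}$ and simplifying $\frac{w(1-w)^{2}}{(1-w)^{2}(1+w)}=\frac{w}{1+w}$. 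I expect the only delicate points to be organizational rather than substantive: the term-by-term inversion is immediate because each summand is a genuine Laplace transform and the sums are finite, and one must record that the $l$-summation runs only to $l=p-1$, a restriction forced by the fact that the left-hand side is a polynomial of degree $b^{p}-1$.
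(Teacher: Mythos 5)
Your proposal is correct and is essentially the paper's own proof: the paper likewise obtains this theorem by applying the inverse Laplace transform (using the two formulas recorded at the start of Section 4, for $1/(p+a)$ and for differences of digamma functions) to the finite $\alpha=1$ identity of Theorem \ref{thm:Thm1-1}, substituting $z=e^{-u}$, and recombining the two $l$-sums over a common denominator exactly as you do. Your remark that the $l$-summation must stop at $l=p-1$ is also right --- the $\sum_{l\ge0}$ in the displayed general-base formula is evidently a typo, as the binary case confirms.
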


Looking at the function
\[
\frac{1-z^{2^{p}}}{1-z}\sum_{l=0}^{p-1}\frac{z^{2^{l}}}{1+z^{2^{l}}}
\]
in the case $p=3$ produces
\begin{align*}
\frac{1-z^{8}}{1-z}\left[\frac{z}{1+z}+\frac{z^{2}}{1+z^{2}}+\frac{z^{4}}{1+z^{4}}\right] & =\frac{1-z^{8}}{1-z}\left[z\frac{1-z}{1-z^{2}}+z^{2}\frac{1-z^{2}}{1-z^{4}}+z^{4}\frac{1-z^{4}}{1-z^{8}}\right]\\
 & =z\left(1+z^{2}+z^{4}+z^{6}\right)\\
 & +z^{2}\left(1+z\right)\left(1+z^{4}\right)\\
 & +z^{4}\left(1+z+z^{2}+z^{3}\right).
\end{align*}

The first term $z\left(1+z^{2}+z^{4}+z^{6}\right)$ counts all the
occurrences of 1's as the rank $0$ (least significant) bit in the
binary representations of the numbers from $0$ to $7$. The second
term $z^{2}\left(1+z\right)\left(1+z^{4}\right)$ counts all the occurrences
of 1's as the rank $1$ bit in these numbers, and so on. Hence formula
(\ref{eq:Lambert finite}) has a simple combinatorial interpretation:
it counts the ones in the binary expansions of the integers between $0$ and $2^{p}-1$ {\it rank-wise}.  This interpretation extends directly to the case of an arbitrary base.

\subsection{The sequence $\Delta s_{2}\left(n\right)$}

The sequence 
\[
\Delta s_{2}\left(n\right)=s_{2}\left(n+1\right)-s_{2}\left(n\right)
\]
appears naturally in the generating function (\ref{eq:Lambert infinite}).
In this section we exhibit some links between this sequence and other
sequences that appear in number theory.

First, write (\ref{eq:Lambert infinite}) in the equivalent form
\[
\sum_{n\ge1}\Delta s_{2}\left(n-1\right)z^{n}=\sum_{l\ge0}\frac{z^{2^{l}}}{z^{2^{l}}+1}.
\]
Together with
\[
\sum_{n\ge1}s_{2}\left(n\right)\left[\frac{1}{n^{p}}-\frac{1}{\left(n+1\right)^{p}}\right]=\frac{2^{1-p}-1}{2^{-p}-1}\zeta\left(p\right),
\]
both identities are recognized as instances of a more general equivalence
that has appeared before \cite[Pb 11 page 300]{Borwein}.
First, denote the Dirichlet eta function by
\begin{equation}
\eta\left(s\right)=\sum_{n\ge1}\frac{\left(-1\right)^{n-1}}{n^{s}}=\left(1-2^{1-s}\right)\zeta\left(s\right).
\label{eq:eta}
\end{equation}
Given two sequences $\left\{ a_{n}\right\} $ and $\left\{ b_{n}\right\} $, 
\[
\eta\left(s\right)\sum_{n\ge1}\frac{a_{n}}{n^{s}}=\sum_{n\ge1}\frac{b_{n}}{n^{s}}
\]
if and only if
\[
\sum_{n\ge1}a_{n}\frac{x^{n}}{1+x^{n}}=\sum_{n\ge1}b_{n}x^{n}.
\]
The special case obtained here corresponds to $b_{n}=s_{2}\left(n\right)-s_{2}\left(n-1\right)$
and $a_{n}=\begin{cases}
1, & n=2^{p};\\
0, & \text{otherwise.}
\end{cases}$ This is a consequence of the way both Lambert and Dirichlet series encode \textit{divisors sums}, a viewpoint which we now will explore. 

Let us denote the $2-$adic valuation of $n,$ i.e.
the exponent of the highest power of $2$ that divides $n,$ by $\nu_{2}\left(n\right).$ 
We have the following result due to Legendre.
\begin{thm}
\label{thm:2-adic}
The $2-$adic valuation sequence and the $\Delta s_{2}\left(n-1\right)$
sequence are related as follows:
\begin{equation}
\Delta s_{2}\left(n-1\right)+\nu_{2}\left(n\right)=1,\thinspace\thinspace n\ge1.\label{eq:2-adic}
\end{equation}
As a consequence, 
\begin{equation}
s_{2}\left(n\right) = n- \sum_{k=1}^{n} \nu_{2}\left(k\right),\thinspace\thinspace n\ge1.\label{eq:2-adic-2}
\end{equation}
\end{thm}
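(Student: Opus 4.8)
The plan is to establish the pointwise relation \eqref{eq:2-adic} first, and then to obtain \eqref{eq:2-adic-2} by a telescoping summation. For \eqref{eq:2-adic} I would argue by strong induction on $n$, driven by the two defining recurrences $s_{2}\left(2m\right)=s_{2}\left(m\right)$ and $s_{2}\left(2m+1\right)=s_{2}\left(m\right)+1$, together with the companion relations for the valuation, namely $\nu_{2}\left(2m+1\right)=0$ and $\nu_{2}\left(2m\right)=1+\nu_{2}\left(m\right)$.

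The base case $n=1$ is immediate, since $s_{2}\left(1\right)-s_{2}\left(0\right)=1$ and $\nu_{2}\left(1\right)=0$. For the inductive step I split on the parity of $n$. If $n=2m+1$ is odd, then $n-1=2m$, so $s_{2}\left(n\right)-s_{2}\left(n-1\right)=\left(s_{2}\left(m\right)+1\right)-s_{2}\left(m\right)=1=1-\nu_{2}\left(n\right)$, using $\nu_{2}\left(2m+1\right)=0$. If $n=2m$ is even with $m\ge1$, then $n-1=2\left(m-1\right)+1$, whence $s_{2}\left(n\right)-s_{2}\left(n-1\right)=s_{2}\left(m\right)-s_{2}\left(m-1\right)-1$; applying the induction hypothesis at $m<n$ gives $s_{2}\left(m\right)-s_{2}\left(m-1\right)=1-\nu_{2}\left(m\right)$, so the difference equals $-\nu_{2}\left(m\right)=1-\left(1+\nu_{2}\left(m\right)\right)=1-\nu_{2}\left(2m\right)=1-\nu_{2}\left(n\right)$. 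This closes the induction.

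A cleaner alternative, which I would at least remark upon, is the purely combinatorial observation that if $\nu_{2}\left(n\right)=v$ then the binary expansion of $n$ terminates in a $1$ followed by exactly $v$ zeros, so that $n-1$ terminates in a $0$ followed by $v$ ones, while all higher bits are unchanged. Passing from $n-1$ to $n$ therefore destroys $v$ ones and creates a single one, giving $s_{2}\left(n\right)-s_{2}\left(n-1\right)=1-v=1-\nu_{2}\left(n\right)$ directly.

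Finally, for \eqref{eq:2-adic-2} I would sum \eqref{eq:2-adic} over $k=1,\dots,n$. The left-hand side telescopes, $\sum_{k=1}^{n}\Delta s_{2}\left(k-1\right)=\sum_{k=1}^{n}\left(s_{2}\left(k\right)-s_{2}\left(k-1\right)\right)=s_{2}\left(n\right)-s_{2}\left(0\right)=s_{2}\left(n\right)$, while $\sum_{k=1}^{n}1=n$, yielding $s_{2}\left(n\right)+\sum_{k=1}^{n}\nu_{2}\left(k\right)=n$, which is the claim (and is exactly Legendre's formula $\nu_{2}\left(n!\right)=n-s_{2}\left(n\right)$). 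No step here poses a genuine obstacle; the only point requiring care is the valuation recursion $\nu_{2}\left(2m\right)=1+\nu_{2}\left(m\right)$ in the even case, which must be invoked precisely so as to match the induction hypothesis.
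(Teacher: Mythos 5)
Your proof is correct, but it takes a genuinely different route from the paper's. You establish $\Delta s_{2}\left(n-1\right)+\nu_{2}\left(n\right)=1$ by strong induction on the parity of $n$ (or, in your remark, by directly inspecting how the trailing block $1\,0^{v}$ of $n$ becomes $0\,1^{v}$ in $n-1$), whereas the paper deliberately derives it from the Lambert series representation \eqref{eq:Lambert infinite}: writing $\sum_{n\ge1}\Delta s_{2}\left(n-1\right)z^{n}=\sum_{n\ge1}\delta_{n}^{\left(2\right)}\frac{z^{n}}{1+z^{n}}$, expanding the right-hand side to get the divisor sum $\Delta s_{2}\left(n-1\right)=\sum_{d\mid n}\left(-1\right)^{n/d+1}\delta_{d}^{\left(2\right)}$, and then counting the contributions of the power-of-two divisors ($\nu_{2}\left(n\right)$ terms equal to $-1$ and one term equal to $+1$). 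The theorem is acknowledged in the paper as well known, and the authors' point is precisely to exhibit it as a consequence of their generating-function machinery; your argument is shorter, entirely elementary, and self-contained, but it does not illustrate that connection. The passage from \eqref{eq:2-adic} to \eqref{eq:2-adic-2} by telescoping is identical in both treatments, and your closing observation that the result is Legendre's formula $\nu_{2}\left(n!\right)=n-s_{2}\left(n\right)$ matches the remark the paper attributes to V.~H.~Moll.
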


Although this result is well-known, we propose a proof that is a consequence of the Lambert series representation \eqref{eq:Lambert infinite} obtained for the generating function of the sequence $s_2\left(n\right).$

\begin{proof}
Let $\left\{ \delta_{n}^{\left(2\right)}\right\}$ denote the indicator
of a power of 2 sequence defined by
\begin{equation}
\delta_{n}^{\left(2\right)}=\begin{cases}
1, & \text{ if $n=2^{l}$}; \\
0, & \text{ otherwise.}
\end{cases}\label{eq:a_n}
\end{equation}
The Lambert generating function (\ref{eq:Lambert infinite}) can be
expressed equivalently as follows:
\[
\sum_{n\ge1}\Delta s_{2}\left(n-1\right)z^{n}=\sum_{n\ge1}\delta_{n}^{\left(2\right)}\frac{z^{n}}{1+z^{n}}.
\]
Expansion of the right-hand side gives
\[
\sum_{n\ge1}\delta_{n}^{\left(2\right)}\frac{z^{n}}{1+z^{n}}=\sum_{n\ge1}\delta_{n}^{\left(2\right)}\sum_{j\ge1}z^{nj}\left(-1\right)^{j+1}=\sum_{j\ge1}z^{j}\sum_{d\vert j}\delta_{d}^{\left(2\right)}\left(-1\right)^{\frac{j}{d}+1},
\]
so that
\begin{equation}
\Delta s_{2}\left(n-1\right)=\sum_{d\vert n}\left(-1\right)^{\frac{n}{d}+1}\delta_{d}^{\left(2\right)}.\label{eq:Deltas2(n-1)}
\end{equation}
Since $\left\{ \delta_{n}^{\left(2\right)}\right\} $ is the indicator
sequence for the powers of $2,$ only those divisors $d$ that are
powers of $2$ contribute to this sum. More precisely, each of the
$\nu_{2}\left(n\right)$ values of $d\in\left\{ 1,2,\ldots,2^{\nu_{2}\left(n\right)-1}\right\} $
contributes a $\left(-1\right)$ term since, for all these values,
$\frac{n}{d}$ is a non-zero power of $2$ so that $\frac{n}{d}+1$
is odd. Only the highest value $d=2^{\nu_{2}\left(n\right)}$ contributes
a $(+1)$ term, so that the sum of all these contributions is $1-\nu_{2}\left(n\right)$.
Identity \eqref{eq:2-adic-2} is obtained by replacing $n$ by $k$ in \eqref{eq:2-adic} and summing over $k$ in the interval $\left[1,n\right].$
\end{proof}
We thank V. H. Moll for suggesting that this result is also in fact equivalent
to Legendre's formula
\begin{equation}
\nu_{2}\left(n!\right)+s_{2}\left(n\right)=n.\label{eq:Legendre}
\end{equation}

Indeed,
\[
\nu_{2}\left(n\right)=\nu_{2}\left(n!\right)-\nu_{2}\left(\left(n-1\right)!\right)
\]
by the multiplicative property of the valuation function; then
using (\ref{eq:Legendre}) gives (\ref{eq:2-adic}), and proving the other
way is equally simple.

The next result is an equivalent form of (\ref{eq:Deltas2(n-1)}).
\begin{thm}
The sequence $\left\{ \delta_{n}^{\left(2\right)}\right\} $ is related
to the sequence $\left\{ \Delta s_{2}\left(n\right)\right\} $ by

\[
\delta_{n}^{\left(2\right)}=\sum_{d\vert n}c_{\frac{n}{d}}\Delta s_{2}\left(d-1\right),
\]
where the sequence $\left\{ c_{n}\right\} $ is defined in terms of
the M\"obius function $\left\{ \mu\left(n\right)\right\} $ by
\begin{equation}
c_{n}=\begin{cases}
\mu\left(n\right), & n\thinspace\thinspace\text{odd};\\
2^{\nu_{2}\left(n\right)-1}\mu\left(\frac{n}{2^{\nu_{2}\left(n\right)}}\right), & n\thinspace\thinspace\text{even.}
\end{cases}\label{eq:cn}
\end{equation}
\end{thm}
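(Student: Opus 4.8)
The starting point is to read \eqref{eq:Deltas2(n-1)} as a Dirichlet convolution. Writing $a_{n}:=\left(-1\right)^{n+1}$, the identity \eqref{eq:Deltas2(n-1)} says precisely that $\Delta s_{2}\left(n-1\right)=\sum_{d\mid n}a_{n/d}\,\delta_{d}^{\left(2\right)}$, i.e. that the sequence $\left\{\Delta s_{2}\left(n-1\right)\right\}$ is the Dirichlet product $a\ast\delta^{\left(2\right)}$. The asserted identity is exactly the inverse relation $\delta^{\left(2\right)}=c\ast\Delta s_{2}\left(\cdot-1\right)$, so once we know that $\left\{c_{n}\right\}$ is the Dirichlet inverse of $\left\{a_{n}\right\}$ --- that is, $\sum_{d\mid n}a_{n/d}\,c_{d}$ equals $1$ for $n=1$ and $0$ otherwise --- the claim will follow by convolving both sides of \eqref{eq:Deltas2(n-1)} with $c$ and using the associativity of the Dirichlet product.

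It therefore remains to compute the Dirichlet inverse of $a$, and the plan is to exploit multiplicativity. Although $a$ is not completely multiplicative (the relation $a_{mn}=a_{m}a_{n}$ fails when both $m$ and $n$ are even), it \emph{is} multiplicative: if $\gcd\left(m,n\right)=1$ then $m$ and $n$ are not both even, and one checks at once that $a_{mn}=a_{m}a_{n}$. Consequently its Dirichlet inverse $c$ is multiplicative as well, hence determined by its values on prime powers. At an odd prime $p$ one has $a_{p^{k}}=1$ for every $k\ge0$, so locally $a$ coincides with the constant sequence $\mathbf{1}$, whose local inverse is the M\"obius function; this already forces $c_{p^{k}}=\mu\left(p^{k}\right)$ at all odd primes. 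At $p=2$ we have $a_{1}=1$ and $a_{2^{k}}=-1$ for $k\ge1$.

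The only genuine computation is the local inverse at $p=2$. Encoding the local values in a formal power series gives $\sum_{k\ge0}a_{2^{k}}x^{k}=1-\frac{x}{1-x}=\frac{1-2x}{1-x}$, whose reciprocal is $\frac{1-x}{1-2x}=\sum_{k\ge0}2^{k}x^{k}-\sum_{k\ge1}2^{k-1}x^{k}$; reading off coefficients yields $c_{1}=1$ and $c_{2^{k}}=2^{k-1}$ for $k\ge1$. One then assembles the global inverse by multiplicativity: writing $n=2^{\nu_{2}\left(n\right)}m$ with $m$ odd, we have $c_{n}=c_{2^{\nu_{2}\left(n\right)}}\,c_{m}$, and since $c_{m}=\mu\left(m\right)=\mu\!\left(n/2^{\nu_{2}\left(n\right)}\right)$ is the M\"obius value of the odd part, this reproduces $c_{n}=\mu\left(n\right)$ when $n$ is odd and $c_{n}=2^{\nu_{2}\left(n\right)-1}\mu\!\left(n/2^{\nu_{2}\left(n\right)}\right)$ when $n$ is even, which is exactly \eqref{eq:cn}.

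The main obstacle is the $p=2$ local inversion together with the correct recombination of the odd and even parts: the factor $2^{\nu_{2}\left(n\right)-1}$ is precisely the signature of the geometric series $\frac{1-x}{1-2x}$, while the M\"obius factor is inherited from the odd primes, where $a$ reduces to the constant $\mathbf{1}$. Everything else is routine bookkeeping with the Dirichlet product. Alternatively, one could bypass multiplicativity and verify directly that $\sum_{d\mid n}a_{n/d}c_{d}$ vanishes for $n>1$ and equals $1$ for $n=1$ from the piecewise definition of $c$, but the multiplicative computation is cleaner and explains the simultaneous appearance of $\mu$ and of the power of $2$.
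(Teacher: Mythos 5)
Your proof is correct, and it carries out the route the paper only gestures at (``this result can be obtained by M\"obius-inverting identity \eqref{eq:Deltas2(n-1)}'') rather than the route the paper actually writes down. The paper's proof passes to Dirichlet series: it rewrites the Lambert series as $\sum_{n}\delta_{n}^{\left(2\right)}n^{-s}=\eta\left(s\right)^{-1}\sum_{n}\Delta s_{2}\left(n-1\right)n^{-s}$ (its \eqref{eq:Lambert 2}), factors $\eta\left(s\right)^{-1}=\left(1-2^{1-s}\right)^{-1}\zeta\left(s\right)^{-1}$ using \eqref{eq:eta}, and identifies coefficients in the product of $\sum_{k\ge0}2^{k}\left(2^{k}\right)^{-s}$ with $\sum_{m\ge1}\mu\left(m\right)m^{-s}$ to obtain \eqref{eq:cn}. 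You instead stay in the ring of arithmetic functions under Dirichlet convolution: you read \eqref{eq:Deltas2(n-1)} as $a\ast\delta^{\left(2\right)}$ with $a_{n}=\left(-1\right)^{n+1}$ (whose Dirichlet series is precisely $\eta\left(s\right)$), observe that $a$ is multiplicative though not completely multiplicative, and compute its Dirichlet inverse locally at each prime, the only nontrivial Euler factor being $\left(1-x\right)/\left(1-2x\right)$ at $p=2$. The two computations are arithmetically the same --- the paper's ``identifying'' step is exactly your convolution of the $2^{k}$-data on powers of two against $\mu$, since for $n=2^{\nu}m$ with $m$ odd only the divisors $m$ and $2m$ contribute, giving $2^{\nu}\mu\left(m\right)-2^{\nu-1}\mu\left(m\right)=2^{\nu-1}\mu\left(m\right)$ --- but your version is somewhat more self-contained, as it relies only on the already-established divisor-sum identity \eqref{eq:Deltas2(n-1)} and not on the Lambert-to-Dirichlet transfer principle, whereas the paper's version makes the roles of $\eta$ and $\zeta$ explicit and ties the result to the surrounding discussion of Dirichlet series.
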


\begin{proof}
This result can be obtained by M\"obius-inverting identity (\ref{eq:Deltas2(n-1)}).
However, we give a second proof by rewriting the generating function
(\ref{eq:Lambert infinite}) in the form
\begin{equation}
\sum_{n\ge1}\frac{\delta_{n}^{\left(2\right)}}{n^{s}}=\frac{1}{\eta\left(s\right)}\sum\frac{\Delta s_{2}\left(n-1\right)}{n^{s}}\label{eq:Lambert 2}
\end{equation}
where, as above, $\left\{ \delta_{n}^{\left(2\right)}\right\} $ is
the indicator function (\ref{eq:a_n}). The inverse of Dirichlet's
eta function can be computed from (\ref{eq:eta}) so that 
\[
\frac{1}{\eta\left(s\right)}=\frac{1}{1-2^{1-s}}\frac{1}{\zeta\left(s\right)}=\frac{1}{1-2^{1-s}}\sum_{n\ge1}\frac{\mu\left(n\right)}{n^{s}}
\]
where $\mu\left(n\right)$ is the M\"obius function. Identifying gives
\[
\frac{1}{\eta\left(s\right)}=\sum_{n\ge1}\frac{c_{n}}{n^{s}}
\]
with $\left\{ c_{n}\right\} $ as in (\ref{eq:cn}). We deduce the desired result from
(\ref{eq:Lambert 2}).
\end{proof}
Our last result in this section is a combinatorial interpretation
of the sequence $\left\{ \delta_{n}^{\left(2\right)}\right\} .$
\begin{thm}
Denote the number
of partitions of $n$ into an even and odd number of parts by $p_{o}\left(n\right)$ and $p_{e}\left(n\right)$ respectively,
and  the number of powers of $2$ in all partitions
of $n$ into distinct parts by $P_{2}\left(n\right)$. 
The sequences $\left\{ \delta_{n}^{\left(2\right)}\right\} ,\thinspace$
$\left\{ P_{2}\left(n\right)\right\} \thinspace,$ $p_{o}\left(n\right)$
and $p_{e}\left(n\right)$ are related by the convolution formula
\begin{align*}
\delta_{n}^{\left(2\right)} & =P_{2}*\left(p_{e}-p_{o}\right)\\
 & =\sum_{k=1}^{n}P_{2}\left(k\right)\left(p_{e}\left(n-k\right)-p_{o}\left(n-k\right)\right).
\end{align*}
\end{thm}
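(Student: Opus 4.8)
The plan is to prove the identity at the level of ordinary generating functions and then read off the coefficient of $x^n$. Writing $A(x)=\sum_{n\ge1}P_2(n)x^n$ and $B(x)=\sum_{n\ge0}\bigl(p_e(n)-p_o(n)\bigr)x^n$, the Cauchy product $A(x)B(x)$ has $x^n$-coefficient equal to $\sum_{k=1}^{n}P_2(k)\bigl(p_e(n-k)-p_o(n-k)\bigr)$, so it suffices to identify $A(x)B(x)$ with the generating series attached to $\{\delta_n^{(2)}\}$. The natural target is the Lambert form of \eqref{eq:Lambert infinite}, namely $\sum_{l\ge0}\frac{x^{2^l}}{1+x^{2^l}}$, which was shown in the proof of Thm \ref{thm:2-adic} to encode the power-of-two indicator sequence.

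First I would compute $B(x)$. Marking the number of parts of an ordinary partition by a formal variable $t$ gives $\prod_{k\ge1}(1-tx^k)^{-1}$; specializing $t=-1$ weights each partition by $(-1)$ to the number of its parts, so that $\sum_n\bigl(p_e(n)-p_o(n)\bigr)x^n=\prod_{k\ge1}(1+x^k)^{-1}$. Next I would compute $A(x)$ by a mark-and-differentiate argument applied to the distinct-part product $\prod_{k\ge1}(1+x^k)$: attaching a marker $t$ to each part that is a power of two, i.e. forming $F(t,x)=\prod_{l\ge0}(1+tx^{2^l})\prod_{k\ \mathrm{not\ a\ power\ of\ }2}(1+x^k)$, one has $P_2(n)=[x^n]\,\partial_t F(t,x)\big|_{t=1}$, since the $t$-derivative evaluated at $t=1$ tallies, with multiplicity, the power-of-two parts occurring across all distinct-part partitions of $n$. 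Logarithmic differentiation then yields
\[
A(x)=\prod_{k\ge1}(1+x^k)\sum_{l\ge0}\frac{x^{2^l}}{1+x^{2^l}}.
\]

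The final step is a telescoping cancellation: in the product $A(x)B(x)$ the factors $\prod_{k\ge1}(1+x^k)$ and $\prod_{k\ge1}(1+x^k)^{-1}$ annihilate, leaving $A(x)B(x)=\sum_{l\ge0}\frac{x^{2^l}}{1+x^{2^l}}$, which is precisely the generating series of $\{\delta_n^{(2)}\}$ coming from \eqref{eq:Lambert infinite}. Extracting the coefficient of $x^n$ through the identification established in the proof of Thm \ref{thm:2-adic} then gives the stated convolution formula. I expect the main obstacle to be the second step: one must justify carefully that the mark-and-differentiate device enumerates $P_2(n)$ correctly — that each power-of-two part is weighted by its multiplicity while the non-power parts remain inert — and then carry out the logarithmic-derivative simplification; once $A(x)$ is secured, the evaluation of $B(x)$ and the Euler-product cancellation are routine.
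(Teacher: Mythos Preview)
Your approach is essentially the paper's: both identify $B(x)=\prod_{k\ge1}(1+x^k)^{-1}=1/(-x;x)_\infty$ and arrive at the generating-function identity $A(x)B(x)=\sum_{l\ge0}\frac{x^{2^l}}{1+x^{2^l}}$. The only real difference is how $A(x)$ is obtained. The paper invokes the Merca--Schmidt Lambert factorization and specializes $a_k=\delta_k^{(2)}$, so that $\sum_{k}(s_o(n,k)+s_e(n,k))a_k$ collapses to $P_2(n)$; you instead compute $A(x)=\prod_{k\ge1}(1+x^k)\sum_{l\ge0}\frac{x^{2^l}}{1+x^{2^l}}$ directly by marking the power-of-two parts and taking a logarithmic derivative. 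Your route is self-contained and transparent for this particular sequence; the paper's citation gives the same formula as a special case of a more general machinery.

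There is, however, a genuine gap in your final step---and the paper's proof shares it. The series $\sum_{l\ge0}\frac{x^{2^l}}{1+x^{2^l}}=\sum_{n\ge1}\delta_n^{(2)}\frac{x^n}{1+x^n}$ is the \emph{Lambert} generating function of $\{\delta_n^{(2)}\}$, not its ordinary generating function $\sum_{n\ge1}\delta_n^{(2)}x^n=\sum_{l\ge0}x^{2^l}$. As worked out in the proof of Thm~\ref{thm:2-adic}, the coefficient of $x^n$ in the Lambert form is $\sum_{d\mid n}(-1)^{n/d+1}\delta_d^{(2)}=\Delta s_2(n-1)=1-\nu_2(n)$, not $\delta_n^{(2)}$. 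Hence ``comparing coefficients'' in $A(x)B(x)=\sum_l\frac{x^{2^l}}{1+x^{2^l}}$ actually yields
\[
\Delta s_2(n-1)=\sum_{k=1}^n P_2(k)\bigl(p_e(n-k)-p_o(n-k)\bigr),
\]
not the stated formula for $\delta_n^{(2)}$. A single check confirms this: for $n=2$ the convolution is $P_2(1)\bigl(p_e(1)-p_o(1)\bigr)+P_2(2)\bigl(p_e(0)-p_o(0)\bigr)=1\cdot(-1)+1\cdot1=0$, which equals $\Delta s_2(1)=0$ but not $\delta_2^{(2)}=1$. So your argument (and the paper's) correctly establishes the $\Delta s_2(n-1)$ identity; the theorem as stated appears to be a misstatement.
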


\begin{proof}
Denote the $q-$Pochhammer symbol by
\[
\left(a;q\right)_{\infty}=\prod_{k\ge0}\left(1-aq^{k}\right).
\]
Merca and Schmidt
have recently shown \cite{Schmidt} the Lambert series factorization
\[
\sum_{n\ge1}a_{n}\frac{q^{n}}{1+q^{n}}=\frac{1}{\left(-q;q\right)_{\infty}}\sum_{n\ge1}q^{n}\sum_{k=1}^{n}\left(s_{o}\left(n,k\right)+s_{e}\left(n,k\right)\right)a_{k},
\]
where $s_{o}\left(n,k\right)$ and $s_{e}\left(n,k\right)$ are the
number of $k's$ in all partitions of $n$ into an odd and even number
of distinct parts respectively. We first notice that $\frac{1}{\left(-q;q\right)_{\infty}}$
is the generating function for the difference of the number of partitions
of $n$ into an even and an odd number of parts, so that 
\[
\frac{1}{\left(-q;q\right)_{\infty}}=\sum_{n\ge0}q^{n}\left(p_{e}\left(n\right)-p_{o}\left(n\right)\right).
\]
Moreover, $\left\{ \delta_{n}^{\left(2\right)}\right\} $ being the
indicator function for powers of two, we have
\[
\sum_{n\ge1}q^{n}\sum_{k=1}^{n}\left(s_{o}\left(n,k\right)+s_{e}\left(n,k\right)\right)a_{k}=\sum_{n\ge1}q^{n}P_{2}\left(n\right),
\]
where $P_{2}\left(n\right)$ counts the number of powers of $2$ in
all partitions of $n$ into distinct parts, so that 
\[
\sum_{n\ge1}\delta_{n}^{\left(2\right)}\frac{q^{n}}{1+q^{n}}=\sum_{n\ge0}q^{n}\left(p_{e}\left(n\right)-p_{o}\left(n\right)\right)\sum_{n\ge1}q^{n}P_{2}\left(n\right).
\]
Comparing coefficients gives the result.
\end{proof}

\section{About sums of the form $S_{N}\left(x\right)=\sum_{n=0}^{2^{N}-1}\left(-1\right)^{s_{2}\left(n\right)}f\left(x+n\right)$ }

\subsection{A general identity}

Here we give the first values of $S_{N}\left(x\right)$: 

with $N=1,$

\[
S_{1}\left(x\right)=f\left(x\right)-f\left(x+1\right)=-\Delta_{1}f\left(x\right),
\]
$N=2,$
\begin{align*}
S_{2}\left(x\right) & =f\left(x\right)-f\left(x+1\right)-f\left(x+2\right)+f\left(x+3\right)\\
 & =-\Delta_{1}f\left(x\right)+\Delta_{1}f\left(x+2\right)=\Delta_{1}\Delta_{2}f\left(x\right),
\end{align*}
and $N=3,$
\begin{align*}
S_{3}\left(x\right) & =f\left(x\right)-f\left(x+1\right)-f\left(x+2\right)+f\left(x+3\right)\\
 & -f\left(x+4\right)+f\left(x+5\right)+f\left(x+6\right)-f\left(x+7\right)\\
 & =S_{2}\left(x\right)-S_{2}\left(x+4\right)=-\Delta_{1}\Delta_{2}\Delta_{4}f\left(x\right).
\end{align*}
Here $\Delta_{i}$ is the finite forward difference operator
\[
\Delta_{i}f\left(x\right)=f\left(x+i\right)-f\left(x\right).
\]
These first cases suggest the general identity
\begin{equation}
\label{eq:Delta}
S_{N}\left(x\right)=\left(-1\right)^{N}\Delta_{1}\Delta_{2}\Delta_{4}\ldots\Delta_{2^{N-1}}f\left(x\right),
\end{equation}
which can be easily proved by induction on $N.$
In this formula, the operators $\Delta_{k}$ commute, and the right-hand
side can be expanded using
\begin{equation}
\Delta_{k}f\left(x\right)=f\left(x+k\right)-f\left(x\right)=kf'\left(x+kU\right)\label{eq:Delta}
\end{equation}
with the symbolic notation
\begin{equation}
\label{f(x+u)}
f\left(x+U\right)=\int_{0}^{1}f\left(x+u\right)du.
\end{equation}
We can then rewrite
\[
S_{N}\left(x\right)=\left(-1\right)^{N}2^{\frac{\left(N-1\right)N}{2}}f^{\left(N\right)}\left(x+U_{0}+2U_{1}+\cdots+2^{N-1}U_{N-1}\right).
\]
This formula can be simplified since, for example,
\begin{align*}
f\left(x+2U_{1}\right) & =\int_{0}^{1}f\left(x+2u\right)du=\frac{1}{2}\int_{0}^{2}f\left(x+v\right)dv\\
 & =\frac{1}{2}\int_{0}^{1}f\left(x+v\right)dv+\frac{1}{2}\int_{1}^{2}f\left(x+v\right)dv\\
 & =\frac{1}{2}f\left(x+U\right)+\frac{1}{2}f\left(x+U+1\right).
\end{align*}
More generally
\[
f\left(x+2^{i}U_{i}\right)=\frac{1}{2^{i}}\sum_{k_{i}=0}^{2^{i}-1}f\left(x+U_{i}+k_{i}\right),
\]
so that, with\footnote{Note that the symbol $V_N$ is defined by its action on $f$ as follows 
\[
f\left(x+V_N\right) = \int_{0}^{1}\ldots \int_{0}^{1} f\left(x+u_{0}+\cdots+u_{N-1}\right)du_{0}\ldots du_{N-1}
\]} $V_{N}=U_{0}+\cdots+U_{N-1},$
\begin{equation}
S_{N}\left(x\right)=\left(-1\right)^{N}\sum_{k_{1}=0}^{1}\ldots\sum_{k_{N-1}=0}^{2^{N-1}-1}f^{\left(N\right)}\left(x+V_{N}+k_{1}+\cdots+k_{N-1}\right).\label{eq:SN(x) 1}
\end{equation}
Now consider an arbitrary function $g$ and the multiple sums
\[
T_{N}\left(x\right)=\sum_{k_{1}=0}^{1}\ldots\sum_{k_{N}=0}^{2^{N}-1}g\left(x+k_{1}+\cdots+k_{N}\right).
\]
For example,
\[
T_{1}\left(x\right)=\sum_{k_{1}=0}^{1}g\left(x+k_{1}\right)=g\left(x\right)+g\left(x+1\right),
\]
\[
T_{2}\left(x\right)=\sum_{k_{1}=0}^{1}\sum_{k_{2}=0}^{3}g\left(x+k_{1}+k_{2}\right)=g\left(x\right)+2g\left(x+1\right)+2g\left(x+2\right)+2g\left(x+3\right)+g\left(x+4\right),
\]
and 
\begin{align*}
T_{3}\left(x\right) & =\sum_{k_{1}=0}^{1}\sum_{k_{2}=0}^{3}\sum_{k_{3}=0}^{7}g\left(x+k_{1}+k_{2}+k_{3}\right)=g\left(x\right)+3g\left(x+1\right)+5g\left(x+2\right)\\
 & +7g\left(x+3\right)+8g\left(x+4\right)+8g\left(x+5\right)+8g\left(x+6\right)+8g\left(x+7\right)+7g\left(x+8\right)\\
 & +5g\left(x+9\right)+3g\left(x+10\right)+g\left(x+11\right).
\end{align*}
The sequences of $2^{N+1}-N-2$ coefficients $\alpha_{k}^{\left(N\right)}$
that appear in the sum
\begin{equation}
T_{N}\left(x\right)=\sum_{k=0}^{2^{N}-N-1}\alpha_{k}^{\left(N\right)}g\left(x+k\right)\label{eq:alpha}
\end{equation}
are, for $N=1$ to $3$ and $0 \le k \le 2^{N+1}-N-2,$
\[
1,1
\]
\[
1,2,2,1
\]
\[
1,3,5,7,8,8,8,8,7,5,3,1
\]
%\begin{align*}
%\alpha_{0}^{\left(1\right)} & =\alpha_{1}^{\left(1\right)}=1
%\end{align*}
%\[
%\alpha_{0}^{\left(2\right)}=\alpha_{4}^{\left(2\right)}=1,\alpha_{1}^{\left(2\right)}=\alpha_{2}^{\left(2\right)}=\alpha_{3}^{\left(2\right)}=2,
%\]
%\[
%\alpha_{0}^{\left(3\right)}=\alpha_{11}^{\left(3\right)}=1,\alpha_{1}^{\left(3\right)}=\alpha_{10}^{\left(3\right)}=3,\alpha_{2}^{\left(3\right)}=\alpha_{9}^{\left(3\right)}=5
%\]
%\[
%\alpha_{3}^{\left(3\right)}=\alpha_{8}^{\left(3\right)}=7,\alpha_{4}^{\left(3\right)}=\alpha_{5}^{\left(3\right)}=\alpha_{6}^{\left(3\right)}=\alpha_{7}^{\left(3\right)}=8.
%\]
The sequence
\[
1,1,1,2,2,2,1,1,3,5,7,8,8,8,8,7,5,3,1,\dots
\]
appears as sequence A131823 in the Online Encyclopedia of Integer Sequences (OEIS), along with the generating function
%is given for the $2^{N+1}-N-1$ terms $\alpha_{k}^{\left(N\right)}$
\begin{equation}
\sum_{k=0}^{2^{N+1}-N-2}\alpha_{k}^{\left(N\right)}x^{k}=\prod_{i=0}^{N-1}\left(1+x^{2^{i}}\right)^{N-i}.\label{eq:gf}
\end{equation}
For example, the case $N=2$ is
\[
\left(1+x\right)^{2}\left(1+x^{2}\right)^{1}=\left(1+2x+x^{2}\right)\left(1+x^{2}\right)=1+2x+2x^{2}+2x^{3}+x^{4}.
\]
Although this result is not mentioned in its OEIS entry, this sequence has
a combinatorial interpretation as a restricted partition function,
namely
\[
\alpha_{k}^{\left(N\right)}=\#\left\{ 0\le k_{1}\le1,\ldots,0\le k_{N}\le2^{N}-1\thinspace\vert\thinspace k_{1}+\cdots+k_{N}=k\right\} .
\]
Also, it can be checked by induction on $N$ that
\begin{equation}
f^{\left(N\right)}\left(x+V_{N}\right)=\sum_{l=0}^{N}\binom{N}{l}\left(-1\right)^{N-l}f\left(x+l\right)=\Delta^{N}f\left(x\right).\label{eq:VN}
\end{equation}
For example, using \eqref{f(x+u)},
\[
f'\left(x+V_{1}\right)=f'\left(x+U_{0}\right)=f\left(x+1\right)-f\left(x\right)
\]
and
\begin{align*}
f''\left(x+V_{2}\right)=f''\left(x+U_{0}+U_{1}\right) & =f'\left(x+1+U_{1}\right)-f'\left(x+U_{1}\right)\\
 & =f\left(x+2\right)-2f\left(x+1\right)+f\left(x\right).
\end{align*}
Using (\ref{eq:VN}) and (\ref{eq:alpha}), we finally obtain the following result:
\begin{thm}
\label{thm:Thm1}For integer $N$ and an arbitrary function $f$ 
\begin{equation}
\sum_{n=0}^{2^{N}-1}\left(-1\right)^{s_{2}\left(n\right)}f\left(x+n\right)=\left(-1\right)^{N}\thinspace\sum_{k=0}^{2^{N}-N-1}\alpha_{k}^{\left(N-1\right)}\Delta^{N}f\left(x+k\right).
\label{eq:Sn Thm}
\end{equation}
\end{thm}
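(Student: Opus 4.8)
The plan is to read the result off directly from three facts already assembled in this section: the symbolic representation (\ref{eq:SN(x) 1}) of $S_N(x)$, the definition (\ref{eq:alpha}) of the coefficients $\alpha_k^{(N-1)}$ through the multiple sum $T_{N-1}$, and the finite-difference identity (\ref{eq:VN}). No new analytic input is required; the entire content of the theorem is a matching of index ranges, so I would present it as a short chain of substitutions.

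First I would start from (\ref{eq:SN(x) 1}), which writes
\[
S_N(x) = (-1)^N \sum_{k_1=0}^{1}\cdots\sum_{k_{N-1}=0}^{2^{N-1}-1} f^{(N)}\!\left(x + V_N + k_1 + \cdots + k_{N-1}\right).
\]
Introducing the auxiliary function $g(y) := f^{(N)}(y + V_N)$, the $(N-1)$-fold sum on the right is exactly the nested sum $T_{N-1}(x)$ that defines the coefficients $\alpha^{(N-1)}$, now applied to this particular $g$, since both carry the same index bounds $0\le k_1\le 1,\ldots,0\le k_{N-1}\le 2^{N-1}-1$. Hence $S_N(x) = (-1)^N T_{N-1}(x)$ for the choice $g = f^{(N)}(\,\cdot\, + V_N)$. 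Next I would invoke (\ref{eq:alpha}) with $N$ replaced by $N-1$ to collapse this nested sum into the single sum $\sum_{k=0}^{2^N-N-1}\alpha_k^{(N-1)} g(x+k)$; the upper limit $2^N-N-1$ is precisely the largest value $2^{(N-1)+1}-(N-1)-2$ attained by $k_1+\cdots+k_{N-1}$, so the full range of the $\alpha^{(N-1)}$ coefficients appears. This yields
\[
S_N(x) = (-1)^N \sum_{k=0}^{2^N-N-1}\alpha_k^{(N-1)}\, f^{(N)}\!\left(x + k + V_N\right).
\]

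Finally I would apply (\ref{eq:VN}) with $x$ shifted to $x+k$ --- legitimate because $V_N$ stands for translation-invariant iterated integration --- to replace each $f^{(N)}(x+k+V_N)$ by $\Delta^N f(x+k)$, which gives the stated formula (\ref{eq:Sn Thm}) with no further computation.

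The one step that deserves care, and which I expect to be the only genuine obstacle, is the index bookkeeping between (\ref{eq:SN(x) 1}) and the definition of $T_{N-1}$. One must track that the $i=0$ factor in the earlier expansion of $f^{(N)}(x+U_0+2U_1+\cdots+2^{N-1}U_{N-1})$ contributes only the trivial summand $k_0=0$, so that exactly $N-1$ nontrivial summation indices survive and the prefactor $2^{(N-1)N/2}$ is cancelled by the product $\prod_{i=0}^{N-1}2^{-i}$. Any off-by-one in the number of surviving indices would shift the superscript on $\alpha$ and the upper summation limit, so I would verify this alignment explicitly (e.g. by checking the agreement against the tabulated cases $N=1,2,3$) before concluding.
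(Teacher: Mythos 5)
Your proposal is correct and follows essentially the same route as the paper: the paper's own derivation of Theorem \ref{thm:Thm1} is precisely the chain \eqref{eq:SN(x) 1} $\to$ \eqref{eq:alpha} (with $N-1$ in place of $N$, applied to $g(y)=f^{(N)}(y+V_N)$) $\to$ \eqref{eq:VN}, including the cancellation of the prefactor $2^{N(N-1)/2}$ against $\prod_{i}2^{-i}$ and the observation that the $i=0$ index is trivial. Your explicit attention to the index bookkeeping is a sound precaution but introduces nothing beyond what the paper already does.
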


\begin{rem}
Note that, by substituting $x=1$ in (\ref{eq:gf}), we have the sum 
\[
\sum_{k=0}^{2^{N}-N-1}\alpha_{k}^{\left(N-1\right)}=2^{\frac{N\left(N-1\right)}{2}}
\]
so that the $2^N-N-1$ positive coefficients
\begin{equation}
p_{k}^{\left(N-1\right)}=\frac{1}{2^{\frac{N\left(N-1\right)}{2}}}\alpha_{k}^{\left(N-1\right)}\label{eq:pkN}
\end{equation}
sum to $1$ and can be interpreted as probability weights; we can
then rewrite identity (\ref{eq:Sn Thm}) as follows:
\begin{equation}
\sum_{n=0}^{2^{N}-1}\left(-1\right)^{s_{2}\left(n\right)}f\left(x+n\right)=\left(-1\right)^{N}2^{\frac{N\left(N-1\right)}{2}}\thinspace\sum_{k=0}^{2^{N}-N-1}p_{k}^{\left(N-1\right)}\Delta^{N}f\left(x+k\right).\label{eq:Sn Thm 2}
\end{equation}
\end{rem}

\begin{example}
Assuming that $f$ is a polynomial of degree $\le N-1,$ we have $\Delta^{N}f=0$
so that the right-hand side in (\ref{eq:Sn Thm}) equals zero. We then recover the fact that the Prouhet-Thue-Morse sequence $\left(-1\right)^{s_2\left(n\right)}$ %the theorem by Prouhet,
satisfies the following property \cite[Section 5.1]{Allouche ubiquitous}:
for all $x,$
\[
\sum_{n=0}^{2^{N}-1}\left(-1\right)^{s_{2}\left(n\right)}P\left(x+n\right)=0
\]
for any polynomial $P$ with $\deg P\le N-1.$
\end{example}

\begin{example}
The following identity, for $k\ge1,$
\begin{equation}
\sum_{n=0}^{2^{N}-1}\left(-1\right)^{s_{2}\left(n\right)}\left(x+n\right)^{N}=\left(-1\right)^{N}2^{\frac{N\left(N-1\right)}{2}}N!\label{eq:AS1}
\end{equation}
appears in Allouche and Shallit's book concerning automatic sequences \cite[page 116]{Allouche Shallit book}. This can be verified
using (\ref{eq:Sn Thm}) by choosing $f\left(x\right)=x^{N}$
so that $\Delta^{N}f\left(x\right)=N!$. This hinges on the fact that through (\ref{eq:Delta}),
\[
\Delta x^{N}=N\left(x+U\right)^{N-1}
\]
and more generally
\[
\Delta^{k}x^{N}=\frac{N!}{\left(N-k\right)!}\left(x+U_{1}+\cdots+U_{k}\right)^{N-k}.
\]
Therefore,
\[
\Delta^{N}x^{N}=N!
\]
and by noticing that the right-hand side of (\ref{eq:Sn Thm 2})
reduces to
\[
\left(-1\right)^{N}2^{\frac{N\left(N-1\right)}{2}}N!\thinspace\sum_{k=0}^{2^{N}-N-1}p_{k}^{\left(N-1\right)}
\]
with the sum equal to $1$, we recover (\ref{eq:AS1}).
\end{example}
\begin{example}
Still in Allouche and Shallit's book on automatic sequences \cite[Pb 39 page 115]{Allouche Shallit book}, one can find
\begin{equation}
\sum_{n=0}^{2^{N}-1}\left(-1\right)^{s_{2}\left(n\right)}\left(x+n\right)^{N+1}=\left(-1\right)^{N}\left(N+1\right)!2^{\frac{N\left(N-1\right)}{2}}\left(x+\frac{2^{N}-1}{2}\right).\label{eq:AS2}
\end{equation}
This can be verified by computing 
\[
\Delta^{N}x^{N+1}=\frac{\left(N+1\right)!}{1!}\left(x+U_{1}+\cdots+U_{N}\right)=\left(N+1\right)!\left(x+\frac{N}{2}\right);
\]
next we obtain for the right-hand side of (\ref{eq:Sn Thm 2})
\begin{equation}
\left(-1\right)^{N}\left(N+1\right)!2^{\frac{N\left(N-1\right)}{2}}\thinspace\sum_{k=0}^{2^{N}-N-1}p_{k}^{\left(N-1\right)}\left(x+k+\frac{N}{2}\right)=\left(-1\right)^{N}\left(N+1\right)!\thinspace\left[x+\frac{N}{2}+\sum_{k=0}^{2^{N}-N-1}kp_{k}^{\left(N-1\right)}\right].\label{eq:moment}
\end{equation}
The moment of order $1$, $\sum_{k=0}^{2^{N}-N-1}kp_{k}^{\left(N-1\right)}$,
can be deduced from the generating function (\ref{eq:gf}) by computing
its logarithmic derivative
\[
\frac{\frac{d}{dx}\prod_{i=0}^{N-1}\left(1+x^{2^{i}}\right)^{N-i}}{\prod_{i=0}^{N-1}\left(1+x^{2^{i}}\right)^{N-i}}=\sum_{i=0}^{N-1}\frac{\left(N-i\right)\left(1+x^{2^{i}}\right)^{N-i-2}2^{i}x^{2^{i}-1}}{\left(1+x^{2^{i}}\right)^{N-i}}
\]
and evaluating at $x=1$ to obtain
\[
\sum_{k=0}^{2^{N}-N-1}kp_{k}^{\left(N-1\right)}=\frac{1}{2}2^{\frac{N\left(N-1\right)}{2}}\left(2^{N}-N-1\right).
\]
Inserting in (\ref{eq:moment}) gives (\ref{eq:AS2}).

We notice that two other proofs of \eqref{eq:AS2} appear in \cite{Bateman}, one by D. Callan based on purely combinatorial arguments, and the other by R. Stong that uses the representation \eqref{eq:Delta}.
\end{example}
%\begin{example}
%The Bernoulli Barnes polynomials $B_{m,N}\left(x;a_{1},\dots,a_{N}\right)$
%are defined by the generating functon
%\[
%\sum_{m\ge0}\frac{B_{m,N}\left(x;a_{1},\dots,a_{N}\right)}{m!}z^{m}=e^{zx}\prod_{k=1}^{N}\frac{a_{k}z}{e^{a_{k}z}-1}.
%\]
%Take the Bernoulli Barnes polynomial
%\[
%f\left(x\right)=B_{m,N}\left(x;1,2,4,\dots,2^{N-1}\right)=\left(x+B_{0}+2B_{1}+\dots+2^{N-1}B_{N-1}\right)^{m}
%\]
%with degree $m>N$ so that, using (\ref{eq:SN(x) 1}),
%\begin{align*}
%\sum_{n=0}^{2^{N}-1}\left(-1\right)^{s_{2}\left(n\right)}B_{m,N}\left(x+n;1,2,\dots,2^{N-1}\right) & =\left(-1\right)^{N}\frac{d^{N}}{dx^{N}}B_{m,N}\left(x+U_{0}+2U_{1}+\dots+2^{N-1}U_{N-1}\right)\\
% & =\left(-1\right)^{N}\frac{d^{N}}{dx^{N}}\left(x+U_{0}+\dots+2^{N-1}U_{N-1}+B_{0}+\dots+2^{N-1}B_{N-1}\right)^{m}\\
% & =\left(-1\right)^{N}\frac{m!}{\left(m-N\right)!}\left(x+U_{0}+\dots+2^{N-1}U_{N-1}+B_{0}+\dots+2^{N-1}B_{N-1}\right)^{m-N}\\
% & =\left(-1\right)^{N}\frac{m!}{\left(m-N\right)!}x^{m-N}.
%\end{align*}
%If the degree $m<N$ then the sum $S_{N}\left(x\right)$ is equal
%to $0;$ if $m=N,$ it is equal to $\left(-1\right)^{N}N!$
%\end{example}

\section{A general formula}\label{section6}
In this last section, we derive a general formula for sums of the form $\sum_{n} s_{2}\left(n\right)f\left(n\right).$
\subsection{The base $b=2$ case}\label{recursivesec}

Consider an arbitrary sequence $f\left(n\right)$ and compute, as
usual, 
\begin{align*}
\sum_{n\ge1}s_{2}\left(n\right)f\left(n\right) & =\sum_{n\ge1}s_{2}\left(2n\right)f\left(2n\right)+\sum_{n\ge0}s_{2}\left(2n+1\right)f\left(2n+1\right)\\
 & =\sum_{n\ge1}s_{2}\left(n\right)f\left(2n\right)+\sum_{n\ge0}s_{2}\left(n\right)f\left(2n+1\right)+\sum_{n\ge0}f\left(2n+1\right),
\end{align*}
so that
\[
\sum_{n\ge0}s_{2}\left(n\right)\left[f\left(n\right)-f\left(2n\right)-f\left(2n+1\right)\right]=\sum_{n\ge0}f\left(2n+1\right).
\]
Hence, given an arbitrary function $g\left(n\right),$ solving for
$f\left(n\right)$ in the implicit equation
\begin{equation}
g\left(n\right)=f\left(n\right)-f\left(2n\right)-f\left(2n+1\right)\label{eq:g(n)}
\end{equation}
would allow for the computation of the sum $\sum_{n\ge0}s_{2}\left(n\right)g\left(n\right)$
as follows:
\[
\sum_{n\ge0}s_{2}\left(n\right)g\left(n\right)=\sum_{n\ge0}f\left(2n+1\right).
\]
The next theorem gives the solution to equation (\ref{eq:g(n)}).
\begin{thm}
\label{thm:Sum base 2}The equation
\begin{equation}
g\left(n\right)=f\left(n\right)-f\left(2n\right)-f\left(2n+1\right)\label{eq:equation}
\end{equation}
has formal solution
\begin{equation}
f\left(n\right)=\sum_{k\ge0}\sum_{l=0}^{2^{k}-1}g\left(2^{k}n+l\right).\label{eq:f(n)}
\end{equation}
As a consequence,
\[
\sum_{n\ge1}s_{2}\left(n\right)g\left(n\right)=\sum_{n\ge0}f\left(2n+1\right)=\sum_{n,k\ge0}\sum_{l=0}^{2^{k}-1}g\left(2^{k+1}n+2^{k}+l\right).
\]
\end{thm}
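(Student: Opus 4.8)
The plan is to exhibit the closed form and verify it, but to make the formula transparent I would first \emph{derive} it by unfolding the recursion $f(n)=g(n)+f(2n)+f(2n+1)$, which is just \eqref{eq:equation} solved for $f(n)$. Applying this once replaces $f(n)$ by $g(n)+f(2n)+f(2n+1)$; applying it again to the two new $f$-terms produces $g(2n)+g(2n+1)$ together with $f(4n),f(4n+1),f(4n+2),f(4n+3)$. The key structural observation is that one layer of the recursion sends the block of arguments $\{2^{j}n,\dots,2^{j}n+2^{j}-1\}$ to the block $\{2^{j+1}n,\dots,2^{j+1}n+2^{j+1}-1\}$: each argument $2^{j}n+m$ splits into its two children $2(2^{j}n+m)$ and $2(2^{j}n+m)+1$, and as $m$ runs over a full residue block of length $2^{j}$ the children sweep out the full block of length $2^{j+1}$ without gaps or overlaps.

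Making this precise, I would prove by induction on $k$ that
\[
f(n)=\sum_{j=0}^{k-1}\sum_{l=0}^{2^{j}-1}g\left(2^{j}n+l\right)+\sum_{l=0}^{2^{k}-1}f\left(2^{k}n+l\right),
\]
the inductive step being exactly the block-doubling observation above applied to each term of the remainder sum. Since we seek only a formal solution, letting $k\to\infty$ and discarding the remainder sum yields \eqref{eq:f(n)}. Equivalently, and perhaps cleaner to present, I would check \eqref{eq:f(n)} directly: isolating the $k=0$ term gives $f(n)=g(n)+\sum_{k\ge1}\sum_{l=0}^{2^{k}-1}g(2^{k}n+l)$, while $f(2n)+f(2n+1)=\sum_{k\ge0}\big[\sum_{l=0}^{2^{k}-1}g(2^{k+1}n+l)+\sum_{l=0}^{2^{k}-1}g(2^{k+1}n+2^{k}+l)\big]$; the bracket merges into $\sum_{l=0}^{2^{k+1}-1}g(2^{k+1}n+l)$, and after reindexing $k\mapsto k+1$ this cancels the tail of $f(n)$, leaving precisely $g(n)$.

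The consequence then follows immediately. The computation preceding the theorem already establishes $\sum_{n\ge1}s_{2}(n)g(n)=\sum_{n\ge0}f(2n+1)$, so it only remains to substitute \eqref{eq:f(n)} at the argument $2n+1$: since $2^{k}(2n+1)+l=2^{k+1}n+2^{k}+l$, we get $f(2n+1)=\sum_{k\ge0}\sum_{l=0}^{2^{k}-1}g(2^{k+1}n+2^{k}+l)$, and summing over $n\ge0$ gives the stated triple sum.

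I do not expect a genuine obstacle, since the statement concerns a \emph{formal} solution and convergence questions are set aside. The only point requiring care is the bookkeeping in the block-merging step: one must verify that the even children $\{2^{k+1}n+l:0\le l\le 2^{k}-1\}$ and the odd children $\{2^{k+1}n+2^{k}+l:0\le l\le 2^{k}-1\}$ partition $\{2^{k+1}n,\dots,2^{k+1}n+2^{k+1}-1\}$ exactly, which is the base-$2$ statement that every integer in a dyadic block has a unique parity and halves correctly. All remaining manipulations are routine reindexing.
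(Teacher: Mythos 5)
Your proposal is correct and is in substance the same as the paper's proof: the paper packages the unfolding as a formal Neumann series $f=\sum_{k\ge0}(\eta+\eta\delta)^{k}g$ for the operators $\eta f(n)=f(2n)$, $\delta f(n)=f(n+\tfrac12)$, and then proves $(\eta+\eta\delta)^{k}g(n)=\sum_{l=0}^{2^{k}-1}g(2^{k}n+l)$ by exactly your block-doubling induction, also noting the direct verification by substitution. Your explicit partial-sum-plus-remainder formulation is a slightly more careful rendering of the same idea, not a different route.
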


\begin{proof}
We introduce the dilation operator 
\[
\eta f\left(n\right)=f\left(2n\right)
\]
and the shift operator
\[
\delta f\left(n\right)=f\left(n+\frac{1}{2}\right).
\]
We then rewrite equation (\ref{eq:equation}) symbolically as follows \footnote{We borrow this elegant operational technique from G. E. Andrews \cite{Andrews}.}
\[
g\left(n\right)=\left(1-\eta-\eta\delta\right)f\left(n\right),
\]
so that we formally have
\[
f\left(n\right)=\frac{1}{1-\eta-\eta\delta}g\left(n\right)=\sum_{k\ge0}\left(\eta+\eta\delta\right)^{k}g\left(n\right).
\]
Note that the two operators $\eta$ and $\delta$ do not commute.
However, we have
\[
\left(\eta+\eta\delta\right)g\left(n\right)=g\left(2n\right)+g\left(2n+1\right),
\]
\begin{align*}
\left(\eta+\eta\delta\right)^{2}g\left(n\right) & =\left(\eta+\eta\delta\right)\left(g\left(2n\right)+g\left(2n+1\right)\right)\\
 & =g\left(4n\right)+g\left(4n+2\right)+g\left(4n+1\right)+g\left(4n+3\right),
\end{align*}
and more generally, by induction,
\[
\left(\eta+\eta\delta\right)^{k}g\left(n\right)=\sum_{l=0}^{2^{k}-1}g\left(2^{k}n+l\right).
\]
Therefore, we have
\[
f\left(n\right)=\sum_{k\ge0}\sum_{l=0}^{2^{k}-1}g\left(2^{k}n+l\right).
\]
It can be verified by substitution that
\begin{align*}
f\left(n\right) & =g\left(n\right)\\
 & +g\left(2n\right)+g\left(2n+1\right)\\
 & +g\left(4n\right)+g\left(4n+1\right)+g\left(4n+2\right)+g\left(4n+3\right)\\
 & +\cdots
\end{align*}
indeed satisfies (\ref{eq:equation}).
\end{proof}
\begin{rem}
The triple summation is formidable to solve. It is often easiest to deal with $g(n)$ of a form which allows for a high degree of telescoping. For example, in the simple case 
\[
g\left(n\right)=\frac{1}{n^{s}}-\frac{1}{\left(2n\right)^{s}},
\]
the sequence $f\left(n\right)$ is evaluated as the telescoping sum
\begin{align*}
f\left(n\right) & =\sum_{k\ge0}\sum_{l=0}^{2^{k}-1}\frac{1}{\left(2^{k}n+l\right)^{s}}-\frac{1}{\left(2^{k+1}n+l\right)^{s}}=\frac{1}{n^{s}}-\frac{1}{\left(n+1\right)^{s}}.
\end{align*}
Note that Allouche and Shallit \cite{Allouche Shallit-1} previously looked for all eigenfunctions of the operator
\[
f\left(x\right)\mapsto f\left(2x\right)+f\left(2x+1\right).
\]
They show that, under some technical conditions, this same sequence $\frac{1}{n^{s}}-\frac{1}{\left(n+1\right)^{s}}$
is the only solution to
\[
f\left(2x\right)+f\left(2x+1\right)=\lambda f\left(x\right).
\]
\end{rem}

\begin{cor}
More generally, the equation 
\begin{equation}
g\left(x+n\right)=f\left(x+n\right)-f\left(x+2n\right)-f\left(x+2n+1\right)\label{eq:equation-1}
\end{equation}
has formal solution
\[
f\left(x+n\right)=\sum_{k\ge0}\sum_{l=0}^{2^{k}-1}g\left(x+2^{k}n+l\right).
\]
As a consequence,
\begin{align*}
\sum_{n\ge1}s_{2}\left(n\right)g\left(x+n\right) & =\sum_{n\ge0}f\left(x+2n+1\right)\\
 & =\sum_{n\ge0}\sum_{k\ge0}\sum_{l=0}^{2^{k}-1}g\left(x+2^{k+1}n+2^{k}+l\right).
\end{align*}
\end{cor}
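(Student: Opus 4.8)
The plan is to reduce the statement to Theorem \ref{thm:Sum base 2} by freezing $x$ and absorbing it into the unknown function. For a fixed $x$, I would introduce the shifted sequences $\tilde{f}\left(n\right):=f\left(x+n\right)$ and $\tilde{g}\left(n\right):=g\left(x+n\right)$. With this notation, equation \eqref{eq:equation-1} becomes
\[
\tilde{g}\left(n\right)=\tilde{f}\left(n\right)-\tilde{f}\left(2n\right)-\tilde{f}\left(2n+1\right),
\]
which is precisely equation \eqref{eq:equation} for the pair $\left(\tilde{f},\tilde{g}\right)$. Theorem \ref{thm:Sum base 2} then supplies the formal solution $\tilde{f}\left(n\right)=\sum_{k\ge0}\sum_{l=0}^{2^{k}-1}\tilde{g}\left(2^{k}n+l\right)$, and translating back through $\tilde{f}\left(n\right)=f\left(x+n\right)$, $\tilde{g}\left(m\right)=g\left(x+m\right)$ gives exactly the claimed expression for $f\left(x+n\right)$.

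For the summation identity, I would apply the same bookkeeping that precedes Theorem \ref{thm:Sum base 2}. The recursion derived there from $s_{2}\left(2n\right)=s_{2}\left(n\right)$ and $s_{2}\left(2n+1\right)=s_{2}\left(n\right)+1$ reads, for the shifted sequences,
\[
\sum_{n\ge0}s_{2}\left(n\right)\left[\tilde{f}\left(n\right)-\tilde{f}\left(2n\right)-\tilde{f}\left(2n+1\right)\right]=\sum_{n\ge0}\tilde{f}\left(2n+1\right),
\]
so that $\sum_{n\ge1}s_{2}\left(n\right)g\left(x+n\right)=\sum_{n\ge0}f\left(x+2n+1\right)$ (the lower limit may be taken as $1$ since $s_{2}\left(0\right)=0$). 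Substituting the solution for $f\left(x+m\right)$ with $m=2n+1$ and simplifying $2^{k}\left(2n+1\right)=2^{k+1}n+2^{k}$ yields the triple sum $\sum_{n,k\ge0}\sum_{l=0}^{2^{k}-1}g\left(x+2^{k+1}n+2^{k}+l\right)$.

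The only point requiring care -- and what I expect to be the main subtlety rather than a genuine obstacle -- is that the dilation operator $\eta$ used in the proof of Theorem \ref{thm:Sum base 2} must act on the summation index alone and must not touch the frozen shift $x$. Applying $\eta$ naively to $f\left(x+n\right)$ would produce $f\left(2x+2n\right)$, which is not what \eqref{eq:equation-1} involves. Passing to $\tilde{f}\left(n\right)=f\left(x+n\right)$ resolves this cleanly: the variable $x$ becomes a passive parameter baked into $\tilde{f}$, the operator identities $\left(\eta+\eta\delta\right)^{k}\tilde{g}\left(n\right)=\sum_{l=0}^{2^{k}-1}\tilde{g}\left(2^{k}n+l\right)$ hold verbatim, and the formal-manipulation questions are inherited unchanged from the $x=0$ case.
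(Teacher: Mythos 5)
Your proposal is correct and matches the paper's (implicit) argument: the paper states this corollary without a separate proof precisely because it follows from Theorem \ref{thm:Sum base 2} by treating $x$ as a frozen parameter, which is exactly the reduction you carry out via $\tilde{f}\left(n\right)=f\left(x+n\right)$ and $\tilde{g}\left(n\right)=g\left(x+n\right)$. Your remark that the dilation operator must act only on the index $n$ and not on $x$ is a sensible clarification of why the reduction is the right formalization.
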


\begin{rem}
Choosing $x=0$ and $g\left(n\right)=z^{n}$ in this identity yields
\[
\sum_{n\ge1}s_{2}\left(n\right)z^{n}=\sum_{n\ge0}\sum_{k\ge0}\sum_{l=0}^{2^{k}-1}z^{2^{k+1}n+2^{k}+l};
\]
after summation over $l$ first, then over $n,$ we recover 
\[
\sum_{n\ge1}s_{2}\left(n\right)z^{n}=\frac{1}{1-z}\sum_{k\ge0}\frac{z^{2^{k}}}{1+z^{2^{k}}}
\]
which is the Lambert series (\ref{eq:Lambert infinite}).
\end{rem}

%This is the most general framework we have to turn a summation with $s_2(n)$ into one without $s_2(n)$. If we only allow $g(n)$ to be nonzero for finitely many terms, we then also have a result for finite summations.

\subsection{Recovering $J_{\infty}$}

Our goal is to recover, from result \eqref{eq:f(n)}, the expression
\[
J_{\infty}\left(x\right)=\sum_{n\geq1}\frac{s_{2}(n)}{(x+n)(x+n+1)}=\sum_{l\geq0}\frac{1}{2^{l}}\beta\left(\frac{x}{2^{l}}+1\right).
\]
Using the expansion of the digamma function 
\[
\psi(x+1)=-\gamma+\sum_{k\geq1}\left(\frac{1}{k}-\frac{1}{x+k}\right),
\]
we can express the Stirling $\beta$ function as follows:
\[
\beta(2x+1)=\frac{1}{2}\psi(x+1)-\frac{1}{2}\psi\left(x+\frac{1}{2}\right)=\frac{1}{2}\sum_{k\geq1}\frac{1}{(x+k)(x-\frac{1}{2}+k)}.
\]
Therefore, 
\[
J_{\infty}\left(x\right)=\sum_{l\geq0}\frac{1}{2^{l+2}}\sum_{k\geq1}\frac{1}{(\frac{x}{2^{l+1}}+k)(\frac{x}{2^{l+1}}-\frac{1}{2}+k)}.
\]
Now using the general formula \eqref{eq:f(n)} with
\[
g(n)=\frac{1}{(x+n)(x+n+1)}=\frac{1}{x+n}-\frac{1}{x+n+1}
\]
gives
\[
f(n)=\sum_{k\geq0}\sum_{l=0}^{2^{k}-1}g(2^{k}n+l)=\sum_{k\geq0}\frac{1}{x+2^{k}n}-\frac{1}{x+2^{k}(n+1)},
\]
since the inner sum telescopes. Now we have 
\begin{align*}
J_{\infty}\left(x\right)=\sum_{n\geq1}f(2n-1) & =\sum_{n\geq1}\sum_{k\geq0}\frac{2^{k}}{(x+2^{k}(2n-1))(x+2^{k}(2n))}\\
 & =\sum_{k\geq0}\frac{1}{2^{k+2}}\sum_{n\geq1}\frac{1}{(\frac{x}{2^{k+1}}+n-\frac{1}{2})(\frac{x}{2^{k+1}}+n)},
\end{align*}
which, after relabelling variables, gives the desired result.

\subsection{Extension to arbitrary base $b$}

The extension of the results of the previous section to an arbitrary
base is given next. We denote the sum of digits of $n$ in the base $b$ 
%$a_{j,b}\left(n\right)$ the number of occurrences of the symbol
%$0\le j\le b-1$ in the base$-b$ expansion of $n,$ while 
by $s_{b}\left(n\right)$.
\begin{thm}
The sequence $s_{b}\left(n\right)$ satisfies
\[
\sum_{n\ge1}s_{b}\left(n\right)\left(f\left(n\right)-\sum_{j=0}^{b-1}f\left(bn+j\right)\right)=\sum_{j=1}^{b-1}j\sum_{n\ge0}f\left(bn+j\right).
\]
\end{thm}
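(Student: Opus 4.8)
The plan is to mirror the base-$2$ computation carried out at the opening of Section~\ref{section6}, replacing the two recurrences $s_2(2n)=s_2(n)$ and $s_2(2n+1)=s_2(n)+1$ by their base-$b$ analogue. Writing any nonnegative integer as $bn+j$ with $0\le j\le b-1$, the base-$b$ digits of $bn+j$ are precisely those of $n$ followed by the single digit $j$, so that
\[
s_{b}\left(bn+j\right)=s_{b}\left(n\right)+j,\qquad 0\le j\le b-1.
\]
This is the only structural fact needed, and it specializes to the binary recurrences when $b=2$.

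First I would start from $\sum_{n\ge0}s_{b}\left(n\right)f\left(n\right)$, which agrees with the sum over $n\ge1$ since $s_{b}\left(0\right)=0$, and split the summation index according to its residue modulo $b$, writing $n=bm+j$ with $0\le j\le b-1$ and $m\ge0$. Inserting the recurrence gives
\[
\sum_{n\ge0}s_{b}\left(n\right)f\left(n\right)=\sum_{j=0}^{b-1}\sum_{m\ge0}\left(s_{b}\left(m\right)+j\right)f\left(bm+j\right),
\]
which I would then separate into two pieces: a term $\sum_{m\ge0}s_{b}\left(m\right)\sum_{j=0}^{b-1}f\left(bm+j\right)$ carrying the factor $s_{b}\left(m\right)$, and a term $\sum_{j=0}^{b-1}j\sum_{m\ge0}f\left(bm+j\right)$ in which the $j=0$ summand vanishes, reducing its range to $1\le j\le b-1$. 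Moving the first piece to the left-hand side produces exactly the claimed identity.

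I do not expect a genuine obstacle here: the argument is a formal rearrangement entirely parallel to the binary case, and the only points requiring care are bookkeeping rather than analysis. Specifically, one must use $s_{b}\left(0\right)=0$ to reconcile the summation starting at $n\ge1$ in the statement with the more symmetric range $n\ge0$ used in the computation, and one must justify interchanging the order of summation when regrouping by residue class. As in the rest of the section, this identity is best read as a formal relation between the data of $f$ and $s_{b}$; under absolute convergence of $\sum_{n}s_{b}\left(n\right)f\left(n\right)$ and of the rearranged sums the interchange is legitimate, and for telescoping choices of $f$ (as in the remark following Theorem~\ref{thm:Sum base 2}) the formal identity becomes an honest numerical one.
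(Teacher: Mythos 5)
Your proposal is correct and follows essentially the same route as the paper: split the index as $n=bm+j$, apply $s_b(bn+j)=s_b(n)+j$, separate the two resulting pieces, and use $s_b(0)=0$ to reconcile the summation ranges. If anything, your version is slightly cleaner, since the paper's displayed intermediate step writes the inner sum over $n\ge1$ (omitting the residues $1,\dots,b-1$ themselves) where $n\ge0$ is what is actually needed to match the final right-hand side.
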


\begin{proof}
We thank J.-P. Allouche for notifying us that this result can be deduced directly from the Lemma p. 21 of \cite{Allouche Shallit-1}. We provide an alternate proof which could lead to future generalization. The only useful fact we have is that $s_b(bn+j)=s_b(n)+j$ for $0 \le j \le b-1$, so we attempt to get  something to telescope using this recurrence. Also note that $s_b(0)=0$, so that many of the summations can start at either $n=0$ or $n=1$. Therefore, consider $$ \sum_{n\ge1}s_b(n) f(n) =\sum_{n\ge1} \sum_{j=0}^{b-1} s_b(bn+j)f(bn+j) = \sum_{n\ge1} \sum_{j=0}^{b-1} s_b(n)f(bn+j) + \sum_{n\ge1} \sum_{j=0}^{b-1}j f(bn+j). $$
Subtracting the right terms from both sides gives $$\sum_{n\geq 1}s_b(n) \left( f(n) - \sum_{j=0}^{b-1} f(bn+j)  \right) = \sum_{j=1}^{b-1}j \sum_{n \geq 0} f\left(  bn+j \right).$$
\end{proof}
This suggests a natural extension: consider higher order recurrences for $s_b$, such as $s_b(b^2n+j)=s_b(n)+s_b(j)$ (in this case $j$ only has two digits so we can even give an explicit formula like $s_b(j)=j - (b-1)  \left \lfloor{\frac{j}{b}}\right \rfloor  $). However, nothing along these lines appears to yield a nice closed form expression, and the most natural recursive formula appears to be the one given above.

We can give an explicit solution to this implicit equation. However, in practice this formula is \textit{extremely} difficult to apply, due to the presence of a quadruple summation.
\begin{thm}
\label{thm:Sum base b}The equation
\[
g\left(n\right)=f\left(n\right)-\sum_{j=0}^{b-1}f(bn+j)
\]
has formal solution
\[
f\left(n\right)=\sum_{k\ge0}\sum_{l=0}^{b^{k}-1}g\left(b^{k}n+l\right).
\]
As a consequence,
\[
\sum_{n\ge1}s_{b}\left(n\right)g\left(n\right)=\sum_{j=1}^{b-1}j\sum_{n,k\ge0}\sum_{l=0}^{b^{k}-1}g\left(b^{k+1}n+b^{k}j+l\right).
\]
\end{thm}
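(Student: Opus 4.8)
The plan is to follow verbatim the operator argument used for Theorem~\ref{thm:Sum base 2}, replacing the pair of maps $\eta,\delta$ by a single averaging operator adapted to base $b$. First I would introduce the operator $\Lambda$ defined by $\Lambda f(n)=\sum_{j=0}^{b-1}f(bn+j)$, which is the base-$b$ analogue of $\eta(1+\delta+\cdots+\delta^{b-1})$ from the binary case (with $\eta f(n)=f(bn)$ and $\delta f(n)=f(n+\tfrac1b)$, so that $\eta\delta^{j}f(n)=f(bn+j)$). With this notation the defining equation reads $g=(1-\Lambda)f$, so that formally $f=(1-\Lambda)^{-1}g=\sum_{k\ge0}\Lambda^{k}g$, exactly as before.

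The core of the argument is to compute the powers $\Lambda^{k}$ explicitly, by induction on $k$. The claim is that $\Lambda^{k}g(n)=\sum_{l=0}^{b^{k}-1}g(b^{k}n+l)$. The base case $k=0$ is immediate, and for the inductive step I would write
\[
\Lambda^{k+1}g(n)=\sum_{j=0}^{b-1}\bigl(\Lambda^{k}g\bigr)(bn+j)=\sum_{j=0}^{b-1}\sum_{l=0}^{b^{k}-1}g\bigl(b^{k+1}n+b^{k}j+l\bigr).
\]
The key step is the observation that as $j$ runs over $\{0,\dots,b-1\}$ and $l$ over $\{0,\dots,b^{k}-1\}$, the index $b^{k}j+l$ runs exactly once over $\{0,\dots,b^{k+1}-1\}$; this is precisely the uniqueness of the base-$b$ representation, and it collapses the double sum into $\sum_{m=0}^{b^{k+1}-1}g(b^{k+1}n+m)$, closing the induction. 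Summing over $k$ yields the asserted formal solution $f(n)=\sum_{k\ge0}\sum_{l=0}^{b^{k}-1}g(b^{k}n+l)$.

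For the consequence I would invoke the recurrence established just above, namely $\sum_{n\ge1}s_{b}(n)\bigl(f(n)-\sum_{j=0}^{b-1}f(bn+j)\bigr)=\sum_{j=1}^{b-1}j\sum_{n\ge0}f(bn+j)$, whose left-hand side is precisely $\sum_{n\ge1}s_{b}(n)g(n)$. It then remains to substitute the solution $f$ evaluated at the argument $bn+j$, using
\[
f(bn+j)=\sum_{k\ge0}\sum_{l=0}^{b^{k}-1}g\bigl(b^{k}(bn+j)+l\bigr)=\sum_{k\ge0}\sum_{l=0}^{b^{k}-1}g\bigl(b^{k+1}n+b^{k}j+l\bigr),
\]
which directly produces the quadruple sum in the statement.

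I expect no genuine obstacle, since this is a direct transcription of the binary case; the two points requiring care are the combinatorial collapse in the inductive step (the base-$b$ digit bijection), and the fact that all manipulations are formal --- the identity $f=\sum_{k}\Lambda^{k}g$ and the reordering of the resulting multiple sums are to be read in the sense of formal series, exactly as the phrase \emph{formal solution} in Theorem~\ref{thm:Sum base 2} already signals.
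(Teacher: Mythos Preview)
Your proposal is correct and follows essentially the same approach as the paper: formal inversion of $1-\Lambda$ via a geometric series and an inductive computation of the powers $\Lambda^{k}$, with the base-$b$ digit bijection collapsing the double sum. The only cosmetic difference is that you package $\eta+\eta\delta+\cdots+\eta\delta^{b-1}$ into a single operator $\Lambda$ from the start, whereas the paper keeps the decomposition into $\eta$ and $\delta$ explicit throughout; your derivation of the final consequence via the preceding identity for $\sum_{n}s_{b}(n)g(n)$ is also exactly what the paper intends.
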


\begin{proof}
Introduce the operators 
\[
\eta f\left(n\right)=f\left(bn\right)
\]
and 
\[
\delta f\left(n\right)=f\left(n+\frac{1}{b}\right).
\]
Then rewrite
\[
g\left(n\right)=\left(1-\eta-\eta\delta-\eta\delta^{2}-\cdots-\eta\delta^{b-1}\right)f\left(n\right)
\]
so that 
\[
f\left(n\right)=\frac{1}{1-\eta-\eta\delta-\eta\delta^{2}-\cdots-\eta\delta^{b-1}}g\left(n\right)=\sum_{k\ge0}\left(\eta+\eta\delta+\cdots+\eta\delta^{b-1}\right)^{k}g\left(n\right).
\]
Note again that the two operators do not commute; however, we have 
\[
\left(\eta+\eta\delta+\cdots+\eta\delta^{b-1}\right)g\left(n\right)=\sum_{l=0}^{b-1}g(bn+l),
\]
\[
\left(\eta+\eta\delta+\cdots+\eta\delta^{b-1}\right)^{2}g\left(n\right)=\sum_{j=0}^{b-1}\sum_{l=0}^{b-1}g(b^{2}n+bl+j)=\sum_{l=0}^{b^{2}-1}g(b^{2}n+l),
\]
and by induction 
\[
\left(\eta+\eta\delta+\cdots+\eta\delta^{b-1}\right)^{k}g\left(n\right)=\sum_{l=0}^{b^{k}-1}g\left(b^{k}n+l\right).
\]
Therefore, 
\[
f\left(n\right)=\sum_{k\ge0}\sum_{l=0}^{b^{k}-1}g\left(b^{k}n+l\right).
\]
\end{proof}

\subsection{A finite version}

This corresponds to taking $g(n)=0$ for $n\ge 2^p-1$ in the results of Section \ref{recursivesec}. In this finite case, paying special attention to the upper indices of all of our summations, we write
\begin{align*}
\sum_{n=1}^{2^{p}-1}s_{2}\left(n\right)f\left(n\right) & =\sum_{n=0}^{2^{p-1}-1}s_{2}\left(2n\right)f\left(2n\right)+\sum_{n=0}^{2^{p-1}-1}s_{2}\left(2n+1\right)f\left(2n+1\right)\\
 & =\sum_{n=1}^{2^{p-1}-1}s_{2}\left(n\right)\left(f\left(2n\right)+f\left(2n+1\right)\right)+\sum_{n=0}^{2^{p-1}-1}f\left(2n+1\right).
\end{align*}
We define the sequence $g\left(n\right)$ as follows:
\begin{equation}
g\left(n\right)=\begin{cases}
f\left(n\right)-f\left(2n\right)-f\left(2n+1\right), & 0\le n\le2^{p-1}-1;\\
f\left(n\right), & 2^{p-1}\le n\le2^{p}-1,
\end{cases}\label{eq:g}
\end{equation}
so that we have
\begin{equation}
\sum_{n=1}^{2^{p}-1}s_{2}\left(n\right)g\left(n\right)=\sum_{n=0}^{2^{p-1}-1}f\left(2n+1\right).\label{eq:sumg}
\end{equation}
Given a sequence $\left\{ g\left(n\right)\right\} ,$ solving equation
(\ref{eq:g}) for $f\left(n\right)$ allows for the computation of the sum $\sum_{n=1}^{2^{p}-1}s_{2}\left(n\right)g\left(n\right)$
using (\ref{eq:sumg}).

Looking at the simple case $p=4$ will help us understand how to solve
(\ref{eq:g}). This system of equations is made of the trivial
\[
g\left(n\right)=f\left(n\right),\thinspace\thinspace8\le n\le15
\]
and 
\begin{align*}
g\left(7\right) & =f\left(7\right)-f\left(14\right)-f\left(15\right)\\
g\left(6\right) & =f\left(6\right)-f\left(12\right)-f\left(13\right)\\
\vdots\\
g\left(1\right) & =f\left(1\right)-f\left(2\right)-f\left(3\right).
\end{align*}
The solution is given by
\[
f\left(n\right)=g\left(n\right),\thinspace\thinspace8\le n\le15
\]
and
\begin{align*}
f\left(7\right) & =g\left(7\right)+g\left(14\right)+g\left(15\right)\\
\vdots\\
f\left(4\right) & =g\left(4\right)+g\left(8\right)+g\left(9\right).
\end{align*}
Then
\begin{align*}
f\left(3\right) & =g\left(3\right)+f\left(6\right)+f\left(7\right)\\
 & =g\left(3\right)+g\left(6\right)+g\left(12\right)+g\left(13\right)+g\left(7\right)+g\left(14\right)+g\left(15\right),
\end{align*}
\begin{align*}
f\left(2\right) & =g\left(2\right)+f\left(4\right)+f\left(5\right)\\
 & =g\left(2\right)+g\left(4\right)+g\left(8\right)+g\left(9\right)+g\left(5\right)+g\left(10\right)+g\left(11\right),
\end{align*}
and
\begin{align*}
f\left(1\right) & =g\left(1\right)+f\left(2\right)+f\left(3\right)\\
 & =g\left(1\right)\\
 & +g\left(2\right)+g\left(4\right)+g\left(8\right)+g\left(9\right)+g\left(5\right)+g\left(10\right)+g\left(11\right)\\
 & +g\left(3\right)+g\left(6\right)+g\left(12\right)+g\left(13\right)+g\left(7\right)+g\left(14\right)+g\left(15\right).
\end{align*}

The following result can be proved by induction on $p.$
\begin{thm}
\label{thm:Finite sum}For $p$ a fixed integer, the solution to the
system of equations (\ref{eq:g}) is given by
\[
f\left(n\right)=\sum_{k=0}^{p-l_{n}}\sum_{l=0}^{2^{k}-1}g\left(2^{k}n+l\right)
\]
where $l_n =  \left \lfloor{ \log_2 n  }\right \rfloor$ is the number of bits in the binary representation
of $n.$

As a consequence, 
\begin{align}
\sum_{n=1}^{2^{p}-1}s_{2}\left(n\right)g\left(n\right)=\sum_{n=1}^{2^{p-1}-1}\sum_{k=0}^{p-l_{2n+1}}\sum_{l=0}^{2^{k}-1}g\left(2^{k+1}n+2^{k}+l\right).
\end{align}
\end{thm}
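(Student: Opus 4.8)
The plan is to exploit the triangular structure of the system (\ref{eq:g}): reading it as equations for the unknown sequence $f$, the top block $2^{p-1}\le n\le 2^{p}-1$ fixes $f(n)=g(n)$ outright, while for $1\le n\le 2^{p-1}-1$ the value $f(n)=g(n)+f(2n)+f(2n+1)$ is expressed through $f(2n)$ and $f(2n+1)$, both of which carry one more binary digit than $n$. Since doubling strictly increases the bit-length, this is a finite back-substitution and the solution is unique; it therefore suffices to check that the proposed closed form satisfies both branches, which I would organize as an induction on the depth $d:=p-l_n$ (exactly what the back-substitution traverses). First I would record the bookkeeping fact that the closed form is well-posed: for $n$ of bit-length $l_n$, the indices $2^{k}n+l$ at level $k\le p-l_n$ fill precisely the dyadic block $[2^{k+l_n-1},2^{k+l_n}-1]$, so they never exceed $2^{p}-1$ and $g$ is only ever evaluated where (\ref{eq:g}) defines it.

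For the base case $d=0$ (the top block) we have $l_n=p$, hence $p-l_n=0$, and the double sum collapses to its single term $g(n)$, matching $f(n)=g(n)$. For the inductive step take $1\le n\le 2^{p-1}-1$ and use $l_{2n}=l_{2n+1}=l_n+1$. Writing out the closed form for $f(2n)$ and $f(2n+1)$ (each with top level $p-l_n-1$) and pairing them level by level, the key step is the merge of two adjacent half-blocks,
\[
\sum_{l=0}^{2^{k}-1}g\!\left(2^{k+1}n+l\right)+\sum_{l=0}^{2^{k}-1}g\!\left(2^{k+1}n+2^{k}+l\right)=\sum_{l=0}^{2^{k+1}-1}g\!\left(2^{k+1}n+l\right).
\]
Reindexing $k\mapsto k+1$ turns $f(2n)+f(2n+1)$ into $\sum_{k=1}^{p-l_n}\sum_{l=0}^{2^{k}-1}g(2^{k}n+l)$, and adjoining $g(n)$ as the missing $k=0$ term reconstitutes the full closed form for $f(n)$. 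This verifies $f(n)=g(n)+f(2n)+f(2n+1)$ and completes the induction.

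The consequence then follows by substitution into (\ref{eq:sumg}): since $\sum_{n=1}^{2^{p}-1}s_{2}(n)g(n)=\sum_{n}f(2n+1)$, I would insert the closed form for $f(2n+1)$, use $2^{k}(2n+1)+l=2^{k+1}n+2^{k}+l$ to rewrite each summand, and record the top level as $p-l_{2n+1}$, yielding the stated triple sum. The whole argument is index bookkeeping rather than an idea; the only points requiring care are tracking the summation limits—confirming $l_{2n}=l_{2n+1}=l_n+1$, that the two half-blocks glue into a single block of length $2^{k+1}$, and that the top level collapses exactly to $g(n)$—together with the range check that keeps every $g(2^{k}n+l)$ inside $[0,2^{p}-1]$.
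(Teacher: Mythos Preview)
Your proof is correct and supplies the details that the paper omits entirely---the paper's own argument is the single sentence ``The following result can be proved by induction on $p$.'' Your organization as a back-substitution (induction on the depth $d=p-l_n$ with $p$ fixed) is slightly different from the paper's stated induction on $p$, but the content is the same: verify the closed form against the two branches of (\ref{eq:g}) using $l_{2n}=l_{2n+1}=l_n+1$ and the merge of adjacent half-blocks. One minor wording quibble: for a single fixed $n$ the indices $2^k n+l$ lie \emph{in} the dyadic block $[2^{k+l_n-1},2^{k+l_n}-1]$ rather than ``fill precisely'' it, but your range conclusion that they never exceed $2^{p}-1$ is unaffected.
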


\subsection{A last result}
To state our last result, we introduce the Barnes zeta functions - see for example \cite{Ruijsenaars} - defined by
\[
\zeta_{p}\left(\alpha;x;\left(a_{1},\dots,a_{p}\right)\right)=\sum_{m_{1},\dots,m_{p} \ge 0}\left(x+a_{1}m_{1}+\dots+a_{p}m_{p}\right)^{-\alpha}
\]
%with the vector $\mathbf{a}=\left(a_{1},\dots,a_{p}\right).$
where the variables $x$ and $a_i$ are positive, and $\alpha > p.$
\begin{thm}
\label{thm29}
The following Hurwitz-type generating function can be computed explicitly: for $\alpha>2$ and $z \ge 0,$
\begin{align}
\label{finite zeta}
\sum_{n=1}^{b^{p}-1}s_{b}\left(n\right)\frac{1}{\left(n+z\right)^{\alpha}} & =\sum_{l=0}^{p-1}\left[\zeta_{2}\left(\alpha,z+b^{l},\left(1,b^{l}\right)\right)-\zeta_{2}\left(\alpha,z+b^{l}+b^{p},\left(1,b^{l}\right)\right)\right]\\
 & -b\sum_{l=1}^{p}\left[\zeta_{2}\left(\alpha,z+b^{l},\left(1,b^{l}\right)\right)-\zeta_{2}\left(\alpha,z+b^{l}+b^{p},\left(1,b^{l}\right)\right)\right].\nonumber
\end{align}
The case $p \to \infty$ is 
\begin{align}
\label{infinite zeta}
\sum_{n=1}^{+\infty}s_{b}\left(n\right)\frac{1}{\left(n+z\right)^{\alpha}} 
%& =
%\sum_{l=0}^{+\infty}\zeta_{2}\left(\alpha,z+b^{l},\left(1,b^{l}\right)\right)-b\sum_{l=1}^{+\infty}\zeta_{2}\left(\alpha,z+b^{l},\left(1,b^{l}\right)\right)\\
 & =-z\zeta\left(\alpha,z+1\right)+\zeta\left(\alpha-1,z+1\right)+\left(1-b\right)\sum_{l=1}^{+\infty}\zeta_{2}\left(\alpha,z+b^{l},\left(1,b^{l}\right)\right).
\end{align}
\end{thm}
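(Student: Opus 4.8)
The plan is to convert the ``difference'' identity of Theorem~\ref{thm:Thm1-1} into the ``power'' sum by telescoping in an auxiliary shift variable. Since $\alpha>0$, for every fixed $n$ one has the telescoping expansion
\[
\frac{1}{\left(z+n\right)^{\alpha}}=\sum_{j\ge0}\left(\frac{1}{\left(z+j+n\right)^{\alpha}}-\frac{1}{\left(z+j+n+1\right)^{\alpha}}\right).
\]
Multiplying by $s_{b}\left(n\right)$ and summing over the \emph{finite} range $1\le n\le b^{p}-1$, the finite $n$-sum may be exchanged with the sum over $j$, so that the left-hand side of \eqref{finite zeta} equals $\sum_{j\ge0}F\left(z+j\right)$, where $F\left(w\right)$ denotes the left-hand side of Theorem~\ref{thm:Thm1-1} with $z$ replaced by $w$. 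It therefore suffices to sum over $j\ge0$ the closed form supplied by Theorem~\ref{thm:Thm1-1}.

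The computation hinges on a single identity expressing a shifted, scaled Hurwitz zeta sum as a Barnes zeta function. Writing out $\zeta\left(\alpha,1+\tfrac{w+j}{b^{l}}\right)=\sum_{m\ge0}\left(1+\tfrac{w+j}{b^{l}}+m\right)^{-\alpha}$ and clearing the factor $b^{-\alpha l}$ gives
\[
\sum_{j\ge0}\frac{1}{b^{\alpha l}}\zeta\left(\alpha,1+\frac{w+j}{b^{l}}\right)=\sum_{j,m\ge0}\frac{1}{\left(w+b^{l}+j+b^{l}m\right)^{\alpha}}=\zeta_{2}\left(\alpha,w+b^{l},\left(1,b^{l}\right)\right).
\]
Applying this with $w=z$ and with $w=z+b^{p}$ to each of the two sums in Theorem~\ref{thm:Thm1-1} converts every $\zeta\left(\alpha,\cdot\right)$ into the corresponding $\zeta_{2}$ term of \eqref{finite zeta}, the prefactors $1$ and $b$ being carried through unchanged; this yields \eqref{finite zeta} directly.

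For the case $p\to\infty$ I would pass to the limit in the finite formula \eqref{finite zeta}. The Barnes terms carrying the shift $b^{p}$ all tend to zero (see below), so the right-hand side converges to the $p=\infty$ values of the two $b^{p}$-free sums. Separating the $l=0$ term of the first sum, which is absent from the second, leaves the coefficient $\left(1-b\right)$ on every $l\ge1$ term and isolates $\zeta_{2}\left(\alpha,z+1,\left(1,1\right)\right)$. This leading Barnes zeta is evaluated in closed form by counting the lattice points on each antidiagonal: $\zeta_{2}\left(\alpha,z+1,\left(1,1\right)\right)=\sum_{k\ge0}\left(k+1\right)\left(z+1+k\right)^{-\alpha}$, and the splitting $k+1=\left(z+1+k\right)-z$ turns this into $\zeta\left(\alpha-1,z+1\right)-z\zeta\left(\alpha,z+1\right)$, which are precisely the first two terms of \eqref{infinite zeta}. (Equivalently, one may sum the asymptotic difference identity of Theorem~\ref{thm:Thm3} over the telescoping index $j$.)

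The main obstacle is analytic bookkeeping rather than algebra: one must justify exchanging the $j$-summation with the internal sums of the zeta functions, which is where the hypothesis $\alpha>2$ enters, since the resulting double series $\zeta_{2}\left(\alpha,\cdot,\left(1,b^{l}\right)\right)$ converge only for $\alpha>2$. When taking $p\to\infty$ in \eqref{finite zeta}, the delicate point is showing that the block of Barnes terms shifted by $b^{p}$ vanishes; this follows from the decay estimate $\zeta_{2}\left(\alpha,x,\left(1,b^{l}\right)\right)=O\!\left(x^{1-\alpha}+b^{-l}x^{2-\alpha}\right)$, which after summing over $0\le l\le p$ with $x\asymp b^{p}$ produces a bound of order $p\,b^{p\left(1-\alpha\right)}+b^{p\left(2-\alpha\right)}\to0$ for $\alpha>2$. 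The same estimate guarantees convergence of $\sum_{l\ge1}\zeta_{2}\left(\alpha,z+b^{l},\left(1,b^{l}\right)\right)$, so that every series appearing in \eqref{infinite zeta} is absolutely convergent.
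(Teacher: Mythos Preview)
Your proof is correct, but it takes a genuinely different route from the paper. The paper proves \eqref{finite zeta} by induction on $p$ (written out for $b=2$): the base case $p=1$ is checked by direct two-dimensional telescoping of the Barnes sums, and the inductive step uses $s_{2}(n+2^{p})=s_{2}(n)+1$ for $0\le n\le 2^{p}-1$ to relate the sum over $[1,2^{p+1}-1]$ to two shifted copies of the sum over $[1,2^{p}-1]$ plus an explicit remainder, then matches this with the corresponding recursion on the Barnes side. The $p\to\infty$ passage is handled ``as in Theorem~\ref{thm:Thm3}'', and the closed form for $\zeta_{2}(\alpha,z+1,(1,1))$ is quoted as an easily checked fact.

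Your approach instead derives \eqref{finite zeta} directly from Theorem~\ref{thm:Thm1-1} by summing the difference identity over a telescoping shift variable $j$, converting each Hurwitz term $b^{-\alpha l}\zeta(\alpha,1+\tfrac{w}{b^{l}})$ into the Barnes value $\zeta_{2}(\alpha,w+b^{l},(1,b^{l}))$. This is cleaner and more conceptual: it explains \emph{why} Barnes zeta functions appear (they are exactly the $j$-summed Hurwitz terms) and avoids the somewhat heavy induction bookkeeping. The paper in fact remarks after the proof that one could have obtained the result from the difference identity via Ruijsenaars' theory of difference equations; your telescoping argument is a concrete, elementary realisation of that alternative. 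The trade-off is that the paper's induction is self-contained and does not rely on Theorem~\ref{thm:Thm1-1}, whereas your proof imports that earlier (and longer) result. Your treatment of the $p\to\infty$ limit, including the decay estimate for $\zeta_{2}(\alpha,x,(1,b^{l}))$ and the antidiagonal evaluation of $\zeta_{2}(\alpha,z+1,(1,1))$, is correct and slightly more explicit than the paper's.
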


\begin{proof}
This identity can be obtained from Thm \ref{thm:Finite sum}; however, this would be a non-rigorous derivation. Therefore, we provide a rigorous proof by induction on the value of $p.$ We  consider here the case $b=2$ since the proof is identical for other values of $b.$  

The base case $p=1$ is as follows: the left-hand side in \eqref{finite zeta} is simply
$
\frac{1}{\left(z+1\right)^{2}},
$
while the right-hand side is
\begin{align*}
\zeta_{2}\left(\alpha,z+1,\left(1,1\right)\right)-\zeta_{2}\left(\alpha,z+1+2,\left(1,1\right)\right)
-2\left(\zeta_{2}\left(\alpha,z+2,\left(1,2\right)\right)-\zeta_{2}\left(\alpha,z+4,\left(1,2\right)\right)\right).
\end{align*}
The first difference is
\[
\zeta_{2}\left(\alpha,z+1,\left(1,1\right)\right)-\zeta_{2}\left(\alpha,z+1+2,\left(1,1\right)\right)
=\sum_{n,p\ge0}\frac{1}{\left(z+1+n+p\right)^{\alpha}}-\frac{1}{\left(z+1+\left(n+1\right)+\left(p+1\right)\right)^{\alpha}}.
\]
Through $2$-dimensional telescoping, each $\left(n,p\right)$
term cancels the next $\left(n+1,p+1\right)$ term so that only the
boundary terms
\[
\sum_{n,p\ge0}\frac{1}{\left(z+1+n+p\right)^{\alpha}}-\frac{1}{\left(z+1+\left(n+1\right)+\left(p+1\right)\right)^{\alpha}}=2\zeta\left(\alpha,z+1\right)-\frac{1}{\left(z+1\right)^{\alpha}}
\]
remain. Similarly, the second difference
\begin{align*}
&\zeta_{2}\left(\alpha,z+2,\left(1,2\right)\right)-\zeta_{2}\left(\alpha,z+4,\left(1,2\right)\right)=\sum_{n,p\ge0}\frac{1}{\left(z+2+n+2p\right)^{\alpha}}-\frac{1}{\left(z+2+\left(n+1\right)+2\left(p+1\right)\right)^{\alpha}}
\end{align*}
telescopes to give $
-2\zeta\left(\alpha,z+2\right).$
The sum of the two terms gives
\[
2\zeta\left(\alpha,z+1\right)-\frac{1}{\left(z+1\right)^{\alpha}}-2\zeta\left(\alpha,z+2\right)=\frac{2}{\left(z+1\right)^{\alpha}}-\frac{1}{\left(z+1\right)^{\alpha}}=\frac{1}{\left(z+1\right)^{\alpha}},
\]
which proves the case $p=1.$ 

As $p$ is replaced by $p+1,$ the left hand side in \eqref{finite zeta} becomes
\[
\sum_{n=0}^{2^{p+1}-1}\frac{s_{2}\left(n\right)}{\left(z+n\right)^{\alpha}}  =\sum_{n=0}^{2^{p}-1}\frac{s_{2}\left(n\right)}{\left(z+n\right)^{\alpha}}+\sum_{n=2^{p}}^{2^{p+1}-1}\frac{s_{2}\left(n\right)}{\left(z+n\right)^{\alpha}}
=\sum_{n=0}^{2^{p}-1}\frac{s_{2}\left(n\right)}{\left(z+n\right)^{\alpha}}+\frac{s_{2}\left(n+2^{p}\right)}{\left(z+n+2^{p}\right)^{\alpha}}.
\]
Since, for $0\le n\le2^{p}-1,$
\[
s_{2}\left(n+2^{p}\right)=s_{2}\left(n\right)+1,
\]
we deduce, using the induction hypothesis, that
\begin{align*}
\sum_{n=0}^{2^{p+1}-1}\frac{s_{2}\left(n\right)}{\left(z+n\right)^{\alpha}} & =\sum_{n=0}^{2^{p}-1}\left(\frac{s_{2}\left(n\right)}{\left(z+n\right)^{\alpha}}+\frac{s_{2}\left(n\right)+1}{\left(z+n+2^{p}\right)^{\alpha}}\right)\\
 & =f_{p}\left(z\right)+f_{p}\left(z+2^{p}\right)+\sum_{n=0}^{2^{p}-1}\frac{1}{\left(z+n+2^{p}\right)^{\alpha}}.
\end{align*}
We use the shorthand notation 
\[
f_{p}\left(z\right)=\sum_{l=0}^{p-1}h_{l}\left(z+2^{l}\right)-h_{l}\left(z+2^{l}+2^{p}\right)-2\sum_{l=1}^{p}h_{l}\left(z+2^{l}\right)-h_{l}\left(z+2^{l}+2^{p}\right)
\]
to denote the right-hand side of \eqref{finite zeta}, with 
\[
h_{l}\left(z\right)=\zeta_{2}\left(\alpha,z,\left(1,2^{l}\right)\right).
\]
However,
\begin{align*}
f_{p}\left(z\right)+f_{p}\left(z+2^{p}\right) & =\sum_{l=0}^{p-1}h_{l}\left(z+2^{l}\right)-h_{l}\left(z+2^{l}+2^{p}\right)-2\sum_{l=1}^{p}h_{l}\left(z+2^{l}\right)-h_{l}\left(z+2^{l}+2^{p}\right)\\
 & +\sum_{l=0}^{p-1}h_{l}\left(z+2^{l}+2^{p}\right)-h_{l}\left(z+2^{l}+2^{p+1}\right)-2\sum_{l=1}^{p}h_{l}\left(z+2^{l}+2^{p}\right)-h_{l}\left(z+2^{l}+2^{p+1}\right)\\
 & =\sum_{l=0}^{p-1}h_{l}\left(z+2^{l}\right)-h_{l}\left(z+2^{l}+2^{p+1}\right)-2\sum_{l=1}^{p}h_{l}\left(z+2^{l}\right)-h_{l}\left(z+2^{l}+2^{p+1}\right)
\end{align*}
These two sums are  extended to sums over $\left[0,p \right]$ and $\left[0,p+1 \right]$ respectively by adding one extra term to obtain
\begin{align*} 
 & \sum_{l=0}^{p}h_{l}\left(z+2^{l}\right)-h_{l}\left(z+2^{l}+2^{p+1}\right)-2\sum_{l=1}^{p+1}h_{l}\left(z+2^{l}\right)-h_{l}\left(z+2^{l}+2^{p+1}\right)\\
 & -\left(h_{p}\left(z+2^{p}\right)-h_{p}\left(z+2^{p}+2^{p+1}\right)\right)+2\left(h_{p+1}\left(z+2^{p+1}\right)-h_{p+1}\left(z+2^{p+2}\right)\right)\\
 & =f_{p+1}\left(z\right)
 -\left(h_{p}\left(z+2^{p}\right)-h_{p}\left(z+2^{p}+2^{p+1}\right)\right)+2\left(h_{p+1}\left(z+2^{p+1}\right)-h_{p+1}\left(z+2^{p+2}\right)\right).
\end{align*}
We thus only need to prove that
\[
f_{p+1}\left(z\right)=f_{p}\left(z\right)+f_{p}\left(z+2^{p}\right)+\sum_{n=0}^{2^{p}-1}\frac{1}{\left(z+n+2^{p}\right)^{\alpha}}
\]
or, equivalently, that
\begin{align*}
\sum_{n=0}^{2^{p}-1}\frac{1}{\left(z+n+2^{p}\right)^{\alpha}} 
%& =
%h_{p}\left(z+2^{p}\right)-h_{p}\left(z+2^{p}+2^{p+1}\right)-2h_{p+1}\left(z+2^{p+1}\right)+2h_{p+1}\left(z+2^{p+2}\right)\\
 & =\sum_{r,s\ge0}\left(z+2^{p}+r+2^{p}s\right)^{-\alpha}-\left(z+2^{p}+2^{p+1}+r+2^{p}s\right)^{-\alpha}\\
 & -2\left(z+2^{p+1}+r+2^{p+1}s\right)^{-\alpha}+2\left(z+2^{p+2}+r+2^{p+1}s\right)^{-\alpha}.
\end{align*}
The first two terms are
\begin{align*}
&\sum_{r,s\ge0}\left(z+2^{p}+r+2^{p}s\right)^{-\alpha}-\left(z+2^{p}+2^{p+1}+r+2^{p}s\right)^{-\alpha}\\
& =
\sum_{r\ge0,s\ge0}\left(z+2^{p}+r+2^{p}s\right)^{-\alpha}-\sum_{r\ge0,s\ge0}\left(z+2^{p}+r+2^{p}\left(s+2\right)\right)^{-\alpha}\\
& =
\sum_{r\ge0,s\ge0}\left(z+2^{p}+r+2^{p}s\right)^{-\alpha}-\sum_{r\ge0,s\ge2}\left(z+2^{p}+r+2^{p}s\right)^{-\alpha} \\
& =\sum_{r\ge0}\left(z+2^{p}+r\right)^{-\alpha}+\sum_{r\ge0}\left(z+2^{p+1}+r\right)^{-\alpha},
\end{align*}
while the last two terms are
\begin{align*}
&-2\sum_{r,s\ge0}\left(z+2^{p+1}+r+2^{p+1}s\right)^{-\alpha}-\left(z+2^{p+1}+2^{p+1}+r+2^{p+1}s\right)^{-\alpha}\\ 
& =
-2\sum_{r,s\ge0}\left(z+2^{p+1}+r+2^{p+1}s\right)^{-\alpha}-\left(z+2^{p+1}+r+2^{p+1}\left(s+1\right)\right)^{-\alpha}
\\ & =
-2\sum_{r,s\ge0}\left(z+2^{p+1}+r+2^{p+1}s\right)^{-\alpha}+2\sum_{r\ge0,s\ge1}\left(z+2^{p+1}+r+2^{p+1}s\right)^{-\alpha}\\
& = -2\sum_{r\ge0}\left(z+2^{p+1}+r\right)^{-\alpha}.
\end{align*}
We deduce
\begin{align*}
& h_{p}\left(z+2^{p}\right)-h_{p}\left(z+2^{p}+2^{p+1}\right)-2h_{p+1}\left(z+2^{p+1}\right)+2h_{p+1}\left(z+2^{p+2}\right)\\ 
& =
\sum_{r\ge0}\left(z+2^{p}+r\right)^{-\alpha}+\sum_{r\ge0}\left(z+2^{p+1}+r\right)^{-\alpha}-2\sum_{r\ge0}\left(z+2^{p+1}+r\right)^{-\alpha} \\
& =
\sum_{r\ge0}\left(z+2^{p}+r\right)^{-\alpha}-\sum_{r\ge0}\left(z+2^{p+1}+r\right)^{-\alpha} \\
& =
\sum_{r\ge0}\left(z+2^{p}+r\right)^{-\alpha}-\sum_{r\ge0}\left(z+2^{p}+r+2^{p}\right)^{-\alpha} \\
& =
\sum_{r\ge0}\left(z+2^{p}+r\right)^{-\alpha}-\sum_{r\ge2^{p}}\left(z+2^{p}+r\right)^{-\alpha} \\
& =
\sum_{r\ge0}^{2^{p}-1}\left(z+2^{p}+r\right)^{-\alpha},
\end{align*}
which is the desired result.
The asymptotic case $p \to \infty$ is deduced from \eqref{finite zeta} using the same approach as in Thm \ref{thm:Thm3} and the easily checked specific value of the Barnes zeta function
\begin{align*}
\zeta_{2}\left(\alpha,z+1,\left(1,1\right)\right) 
%& =\frac{\left(-1\right)^{\alpha-1}}{\left(\alpha-1\right)!}\left[\left(\alpha-1\right)\psi^{\left(\alpha-2\right)}\left(z\right)-\psi^{\left(\alpha-1\right)}\left(z\right)+z\psi^{\left(\alpha-1\right)}\left(z\right)\right]\\
=-z\zeta\left(\alpha,z+1\right)+\zeta\left(\alpha-1,z+1\right).
\end{align*}
\end{proof}

The limit case $\alpha=2$ in Thm \ref{thm29} can be obtained in terms of Barnes' double
Gamma function $\Gamma_{2}\left(z,\left(\omega_{1},\omega_{2}\right)\right)$, which is defined
as the derivative
\[
\Gamma_{2}\left(z,\left(\omega_{1},\omega_{2}\right)\right)=\frac{\partial}{\partial\alpha}\zeta_{2}\left(\alpha,z,\left(\omega_{1},\omega_{2}\right)\right)_{\vert\alpha=0}.
\]
Barnes shows \cite[p. 286]{barnes} that the third order logarithmic derivative of the double log-Gamma function satisties 
\[
\psi_{2}^{\left(3\right)}\left(z,\left(\omega_{1}\omega_{2}\right)\right)=\frac{d^{3}}{dz^{3}}\log\Gamma_{2}\left(z,\left(\omega_{1},\omega_{2}\right)\right)=-2\sum_{m_{1},m_{2}\ge0}\frac{1}{\left(z+m_{1}\omega_{1}+m_{2}\omega_{2}\right)^{3}},
\]
so that 
\[
\lim_{\alpha\to2}\sum_{m_{1},m_{2}\ge0}\frac{1}{\left(z+m_{1}\omega_{1}+m_{2}\omega_{2}\right)^{\alpha}}=\psi_{2}^{\left(2\right)}\left(z,\left(\omega_{1},\omega_{2}\right)\right).
\]
As a consequence, 
%\begin{align*}
%\sum_{n \ge 1}\frac{s_{b}\left(n\right)}{\left(n+z\right)^{2}} 
%%& =\sum_{l=0}^{p}\psi_{2}^{\left(2\right)}\left(z+b^{l},\left(1,b^{l}\right)\right)\\
%% & -b\sum_{l=1}^{+\infty}\left[\psi_{2}^{\left(2\right)}\left(z+b^{l},\left(1,b^{l}\right)\right)-\psi_{2}^{\left(2\right)}\left(z+b^{l}+b^{p},\left(1,b^{l}\right)\right)\right]\\
% & =\psi_{2}^{\left(2\right)}\left(z,\left(1,1\right)\right)+\left(1-b\right)\sum_{l=1}^{+\infty}\psi_{2}^{\left(2\right)}\left(z,\left(1,b^{l}\right)\right).
%\end{align*}
%and the asymptotic case is
\begin{align*}
\sum_{n\ge1}\frac{s_{b}\left(n\right)}{\left(n+z\right)^{2}} 
%& =\sum_{l=0}^{+\infty}\psi_{2}^{\left(2\right)}\left(z,\left(1,b^{l}\right)\right)-b\sum_{l=1}^{+\infty}\psi_{2}^{\left(2\right)}\left(z,\left(1,b^{l}\right)\right).\\
 & =\psi_{2}^{\left(2\right)}\left(z,\left(1,1\right)\right)+\left(1-b\right)\sum_{l=1}^{+\infty}\psi_{2}^{\left(2\right)}\left(z,\left(1,b^{l}\right)\right).
\end{align*}
An explicit computation of the $\psi_{2}^{\left(2\right)}$ function yields the following result.
\begin{cor}\label{cor30}
The following identity holds for $z>0$:
\begin{align*}
\sum_{n\ge1}\frac{s_{b}\left(n\right)}{\left(n+z\right)^{2}}  =
%\psi_{2}^{\left(2\right)}\left(z,\left(1,1\right)\right)+\left(1-b\right)\sum_{l=1}^{+\infty}\psi_{2}^{\left(2\right)}\left(z,%\left(1,b^{l}\right)\right)\\&=
 -\psi\left(z\right)+ \left(1-z\right)\psi'\left(z\right)
 %\sum_{m\ge0}\left[\zeta\left(2,m+z\right)-\frac{1}{m+z}\right]\\
+\left(1-b\right)\sum_{l\ge1}\left( -1-\psi\left(z\right)+\frac{1}{b^{2l}}\sum_{m\ge0}\tilde{\psi'}\left(mb^{l}+z\right)\right) 
\end{align*}
with
\[
\tilde{\psi'}\left(z\right) = \psi'\left(z\right)-\frac{1}{z}.
\]
\end{cor}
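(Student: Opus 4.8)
The plan is to start from the displayed consequence of Theorem \ref{thm29} obtained by letting $\alpha\to2$ in the asymptotic identity \eqref{infinite zeta}, namely
\[
\sum_{n\ge1}\frac{s_b(n)}{(n+z)^2}=\psi_2^{(2)}(z,(1,1))+(1-b)\sum_{l\ge1}\psi_2^{(2)}(z,(1,b^l)),
\]
so that everything reduces to evaluating the two regularized Barnes values, i.e. the finite parts at $\alpha=2$ of $\zeta_2(\alpha,z,(1,1))$ and $\zeta_2(\alpha,z,(1,b^l))$. The point to keep in mind throughout is that $\zeta_2(\alpha,z,\cdot)$ has a simple pole at the boundary $\alpha=p=2$, with a $z$-independent residue, so $\psi_2^{(2)}$ is precisely the constant term of the Laurent expansion; this is consistent with $\frac{d}{dz}\psi_2^{(2)}=\psi_2^{(3)}=-2\zeta_2(3,z,\cdot)$, which is already a convergent series.

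For the diagonal term I would collapse the double lattice sum by counting: grouping the pairs $(m_1,m_2)$ with $m_1+m_2=n$ gives multiplicity $n+1$, whence $\zeta_2(\alpha,z,(1,1))=\sum_{n\ge0}(n+1)(z+n)^{-\alpha}=\zeta(\alpha-1,z)+(1-z)\zeta(\alpha,z)$. Inserting the Laurent expansion $\zeta(s,z)=\frac1{s-1}-\psi(z)+O(s-1)$, used at $s=\alpha-1\to1$, together with $\zeta(2,z)=\psi'(z)$, isolates the pole $\tfrac1{\alpha-2}$ and leaves the finite part $-\psi(z)+(1-z)\psi'(z)$, which is the first term of the claimed formula.

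For the lattice $(1,b^l)$ I would first perform the summation over the unit direction, $\zeta_2(\alpha,z,(1,b^l))=\sum_{m\ge0}\zeta(\alpha,z+mb^l)$, and then split each Hurwitz zeta as $\zeta(\alpha,w)=\frac{w^{1-\alpha}}{\alpha-1}+\bigl(\zeta(\alpha,w)-\frac{w^{1-\alpha}}{\alpha-1}\bigr)$. The remainder series $\sum_{m\ge0}\bigl(\zeta(\alpha,z+mb^l)-\tfrac{(z+mb^l)^{1-\alpha}}{\alpha-1}\bigr)$ is analytic at $\alpha=2$ and evaluates there to $\sum_{m\ge0}\tilde{\psi'}(z+mb^l)$, the subtraction of $(z+mb^l)^{-1}$ being exactly what makes the series converge; this is where the function $\tilde{\psi'}$ enters. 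The leading pieces sum to $\frac{b^{l(1-\alpha)}}{\alpha-1}\zeta(\alpha-1,z/b^l)$, which carries the pole, and expanding this about $\alpha=2$ with the same Laurent data yields its finite part in terms of $\psi$, $\psi'$ and elementary factors; the functional relations for $\psi$ then put the result into the stated shape.

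Finally I would reassemble the two evaluations, insert them into the series over $l$ weighted by $(1-b)$, and simplify. Two points need care. First, the interchange of the limit $\alpha\to2$ with the infinite sum over $l$ must be justified; I expect the same exponential decay estimate used in the proof of Theorem \ref{thm:Thm3} to control the tail. Second, and this is the main obstacle, one must track the $z$-independent constants and the $\log b$ contributions produced by the pole of each $\zeta_2(\alpha,z,(1,b^l))$ at $\alpha=2$: these are the delicate part of the finite-part extraction, and correctly bookkeeping them across the sum over $l$ is what turns the raw finite parts into the compact closed form of the statement.
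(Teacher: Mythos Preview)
Your approach is essentially the same as the paper's: both start from the identity
\[
\sum_{n\ge1}\frac{s_b(n)}{(n+z)^2}=\psi_2^{(2)}(z,(1,1))+(1-b)\sum_{l\ge1}\psi_2^{(2)}(z,(1,b^l))
\]
and then compute the finite part of $\zeta_2(\alpha,z,\cdot)$ at $\alpha=2$. The only real difference is that the paper quotes a ready-made Laurent expansion of the Barnes double zeta at $\alpha=2$ due to Spreafico,
\[
\zeta_{2}\bigl(\alpha,z,(\omega_{1},\omega_{2})\bigr)\underset{\alpha=2}{\sim}\frac{1}{\omega_{1}\omega_{2}}\frac{1}{\alpha-2}-\frac{1}{\omega_{1}\omega_{2}}\Bigl(1+\log\omega_{1}+\psi\bigl(\tfrac{z}{\omega_{1}}\bigr)\Bigr)+\frac{1}{\omega_{2}^{2}}\sum_{m\ge0}\Bigl[\zeta\bigl(2,\tfrac{\omega_{2}m+z}{\omega_{1}}\bigr)-\tfrac{\omega_{1}}{\omega_{2}m+z}\Bigr],
\]
and substitutes $(\omega_1,\omega_2)=(1,1)$ and $(1,b^l)$, whereas you rederive this expansion by summing the unit direction first and splitting $\zeta(\alpha,w)=\frac{w^{1-\alpha}}{\alpha-1}+\text{remainder}$. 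Your treatment of the diagonal term via $\zeta_2(\alpha,z,(1,1))=\zeta(\alpha-1,z)+(1-z)\zeta(\alpha,z)$ is actually cleaner than the paper's, which after applying Spreafico still has to simplify $\sum_{m\ge0}[\zeta(2,m+z)-\tfrac{1}{m+z}]$ to $(1-z)\psi'(z)+1$ by a separate finite-sum computation. For the $(1,b^l)$ term your derivation and Spreafico's formula are doing the same work; the bookkeeping of constants and $\log b$ pieces that you flag as ``the main obstacle'' is precisely what Spreafico's formula packages, so citing it would spare you that step.
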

\begin{proof}

The explicit expression of the $\psi_{2}^{\left(2\right)}$ function
is provided by Spreafico \cite{Spreafico}, where the residue at the simple pole at $\alpha=2$ and corresponding finite
part of the double zeta function are given by
\begin{align*}
\zeta_{2}\left(\alpha,z,\left(\omega_{1},\omega_{2}\right)\right) & \underset{\alpha=2}{\sim}\frac{1}{\omega_{1}\omega_{2}}\frac{1}{\alpha-2}\\
& -\frac{1}{\omega_{1}\omega_{2}}\left(1+\log\omega_{1}+\psi\left(\frac{z}{\omega_{1}}\right)\right) +\frac{1}{\omega_{2}^{2}}\sum_{m\ge0}\left[\zeta\left(2,\frac{\omega_{2}m+z}{\omega_{1}}\right)-\frac{\omega_{1}}{\omega_{2}m+z}\right].
\end{align*}
Substituting in \eqref{infinite zeta} shows that the residue at $\alpha=2$ cancels and
only the finite part remains, which yields
\begin{align*}
\sum_{n\ge1}\frac{s_{b}\left(n\right)}{\left(n+z\right)^{2}}  &=
 \sum_{m\ge0}\left[\zeta\left(2,m+z\right)-\frac{1}{m+z}\right]\\
&+\left(1-b\right)\sum_{l\ge1}\left\{ -1-\psi\left(z\right)+\frac{1}{b^{2l}}\sum_{m\ge0}\left[\zeta\left(2,mb^{l}+z\right)-\frac{1}{mb^{l}+z}\right]\right\} .
\end{align*}
The first sum on the right-hand side can be simplified to
\[
\sum_{m\ge0}\left[\zeta\left(2,m+z\right)-\frac{1}{m+z}\right]=\left(1-z\right)\psi'\left(z\right)+1
\]
as follows: since $\zeta\left(2,z\right)=\psi'\left(z\right),$ consider the finite sum
\begin{align*}
\sum_{m=0}^{M}\psi'\left(z+m\right)-\frac{1}{m+z} & =\sum_{m=0}^{M}\left[\psi'\left(z+m\right)-\psi\left(m+z+1\right)+\psi\left(m+z\right)\right]\\
 & =\sum_{m=0}^{M}\psi'\left(z+m\right)-\left(\psi\left(M+z+1\right)-\psi\left(z\right)\right).
\end{align*}
Since moreover, by basic properties of the digamma function,
\[
\sum_{m=0}^{M}\psi\left(z+m\right)=-M-1+\left(1-z\right)\psi\left(z\right)+\left(M+z\right)\psi\left(M+z+1\right),
\]
so that, by differentiation,
\[
\sum_{m=0}^{M}\psi'\left(z+m\right)=-\psi\left(z\right)+\psi\left(M+z+1\right)+\left(1-z\right)\psi'\left(z\right)+\left(M+z\right)\psi'\left(M+z+1\right).
\]
and
\[
\sum_{m=0}^{M}\psi'\left(z+m\right)-\frac{1}{m+z}=\left(1-z\right)\psi'\left(z\right)+\left(M+z\right)\psi'\left(M+z+1\right).
\]
The limit
\[
\lim_{M\to+\infty}\left(M+z\right)\psi'\left(M+z+1\right)=\lim_{z\to+\infty}z\psi'\left(z+1\right)=1,
\]
as a consequence of the asymptotic behavior 
\[
\psi'\left(z\right)=\frac{1}{z}+\frac{2}{z^2}+O\left(\frac{1}{z^3}\right)
\]
as $z \to + \infty$, yields the result.

\end{proof}

As a last remark, computing $\sum_{n=1}^{b^{p}-1}\frac{s_{b}\left(n\right)}{\left(n+z\right)^{\alpha}}$ from the difference 
$\sum_{n=1}^{b^{p}-1}s_{b}\left(n\right)\left(\frac{1}{\left(z+n\right)^{\alpha}}-\frac{1}{\left(z+n+1\right)^{\alpha}}\right)$ amounts to solving a difference equation; we could have used a general result by Ruijsenaars \cite{Ruijsenaars} about minimal solutions of difference equations to obtain this result. 

\section{Conclusion and open questions}
This article provides several instances of finite sums involving the digit
function $s_{2}\left(n\right).$ Its aim is to provide some basic
tools that allow us to compute these sums; many other instances can be deduced from these basic methods.

There are two obvious ways to generalize our results: one is to consider the sum $s_b(n)$ of the base-$b$ digits of $n$, and the associated sequence $\{(-1)^{s_b(n)}\}$. This was accomplished in many of our theorems, which shows that some results transfer from base $2$ to base $b$ almost identically. The second generalization is to \textit{$k$-automatic sequences}, which are characterized by a finite set of recurrences. For example, the Rudin-Shapiro sequence $\{b_n\}$ is completely characterized by the recurrences $b_{2n}=b_n$, $b_{2n+1}= (-1)^nb_n$, and initial values. Many of the proofs given in this paper depend on the recursive properties of $s_2(n)$, and may generalize to related automatic sequences.

Another remark is that we can write identity (\ref{eq:Sn Thm}) in Thm \ref{thm:Thm1} under the form
\begin{equation}
\sum_{n=0}^{2^{N+1}-1}\left(-1\right)^{s_{2}\left(n\right)}2^{-\frac{N\left(N+1\right)}{2}}\left(-\Delta\right)^{-N-1}f\left(x+n\right)=\mathbb{E}f\left(x+Z_{N}\right),\label{eq:Sn Thm 2-2}
\end{equation}
where $\mathbb{E}$ is the expectation operator, and consider the standardized (zero mean and unit variance) version of the random variable $Z_{N}$
\[
\tilde{Z}_{N}=\frac{Z_{N}-\mu_{N}}{\sigma_{N}}.
\]
The mean and variance of $Z_N$ are respectively
\[
\mu_{N}=\mathbb{E}Z_{N}=2^{N}-\frac{N}{2}-1,
\]
\[
\sigma_{N}^{2}=\frac{1}{9}\left(2^{2N}-\frac{3}{4}N-1\right).
\]
This produces
\[
\sum_{n=0}^{2^{N+1}-1}\left(-1\right)^{s_{2}\left(n\right)}2^{-\frac{N\left(N+1\right)}{2}}\left(-\Delta\right)^{-N-1}f\left(x+\frac{n-\mu_{N}}{\sigma_{N}}\right)=\mathbb{E}f\left(x+\tilde{Z}_{N}\right).
\]
This identity suggests that, as $N\to\infty,$ one could expect a formula
of the type
\[
\lim_{N\to\infty}\sum_{n=0}^{2^{N+1}-1}\left(-1\right)^{s_{2}\left(n\right)}2^{-\frac{N\left(N+1\right)}{2}}\left(-\Delta\right)^{-N-1}f\left(x+\frac{n-\mu_{N}}{\sigma_{N}}\right)=\int_{-\infty}^{+\infty}f\left(x+z\right)d\lambda\left(z\right)
\]
for some limit measure $\lambda$ that is interesting to look at.
A detailed analysis of the random variables $Z_{N}$ and $\tilde{Z}_{N}$
is given in the Annex. Surprisingly, all cumulants of the standardized
random variable $\tilde{Z}_{N}$ can be computed explicitly and are
given by (\ref{eq:cumulants}). Their expression shows that the limit
distribution $\lambda$ of $\tilde{Z}_{\infty}$ is not Gaussian.
We leave as an open question whether this distribution has a density or not.

Finally, the possibility to write an identity such as \eqref{eq:Sn Thm 2-2} is related to the theory of discrete splines: the reader is referred to \cite{Dahmen}.

\section{Annex: A study of the associated random variable}

The random variable $Z_{N}$ with probability weights $p_{k}^{\left(N\right)}$
can be characterized as follows: denote a random discrete
variable uniformly distributed over $\left\{ 0,1,\ldots,2^{i}-1\right\} $ by 
$V_{i}$
so that
\[
\Pr\left\{ V_{i}=k\right\} =\begin{cases}
2^{-i}, & 0\le k\le2^{i}-1;\\
0, & \text{otherwise,}
\end{cases}
\]
and define the random variable
\[
Z_{N}=V_{1}+\cdots+V_{N}.
\]
Then
\[
\Pr\left\{ Z_{N}=k\right\} =\begin{cases}
p_{k}^{\left(N\right)}, & 0\le k\le2^{N}-N-1;\\
0, & \text{otherwise.}
\end{cases}.
\]
For example,
\[
\Pr\left\{ V_{1}=0\right\} =\Pr\left\{ V_{1}=1\right\} =\frac{1}{2}
\]
and
\[
\Pr\left\{ V_{2}=0\right\} =\Pr\left\{ V_{2}=1\right\} =\Pr\left\{ V_{2}=2\right\} =\Pr\left\{ V_{2}=3\right\} =\frac{1}{4},
\]
so that
\[
Z_{2}=V_{1}+V_{2}
\]
takes values in the set $\left\{ 0,1,2,3,4\right\} $ with respective
probabilities $\left\{ 1,2,2,2,1\right\} .$
\begin{prop}
The moment generating function of $Z_{N}$ is
\begin{align}
\mathbb{E}e^{zZ_{N}} & =\prod_{i=0}^{N-1}\left(\frac{1}{2}+\frac{1}{2}e^{2^{i}z}\right)^{N-i}.\label{eq:mgf1}
\end{align}
It is also equal to
\begin{equation}
\mathbb{E}e^{zZ_{N}}=\prod_{k=1}^{N}\frac{1}{2^{k}}\frac{1-e^{2^{k}z}}{1-e^{z}}.\label{eq:mgf2}
\end{equation}
\end{prop}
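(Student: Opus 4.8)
The plan is to treat $Z_N=V_1+\cdots+V_N$ as a sum of independent variables, so that its moment generating function factorizes. First I would write, using independence of the $V_i$,
\[
\mathbb{E}e^{zZ_N}=\prod_{i=1}^{N}\mathbb{E}e^{zV_i},
\]
and then evaluate each factor as a finite geometric sum: since $V_i$ is uniform on $\{0,1,\dots,2^i-1\}$,
\[
\mathbb{E}e^{zV_i}=\frac{1}{2^i}\sum_{j=0}^{2^i-1}e^{jz}=\frac{1}{2^i}\,\frac{1-e^{2^iz}}{1-e^z}.
\]
Multiplying over $i=1,\dots,N$ and renaming the index $k$ gives \eqref{eq:mgf2} at once.

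To pass to \eqref{eq:mgf1} I would invoke the binary factorization of the partial geometric series,
\[
\frac{1-x^{2^k}}{1-x}=1+x+\cdots+x^{2^k-1}=\prod_{i=0}^{k-1}\bigl(1+x^{2^i}\bigr),
\]
which holds because expanding the product on the right selects each binary word of length $k$, i.e.\ each exponent in $\{0,\dots,2^k-1\}$, exactly once. Setting $x=e^z$ turns the $k$-th factor into $\prod_{i=0}^{k-1}\tfrac12\bigl(1+e^{2^iz}\bigr)$, so that
\[
\mathbb{E}e^{zZ_N}=\prod_{k=1}^{N}\prod_{i=0}^{k-1}\Bigl(\tfrac12+\tfrac12 e^{2^iz}\Bigr).
\]
The final step is a bookkeeping of multiplicities: in this double product the factor with index $i$ appears once for every $k$ with $k\ge i+1$, that is for the $N-i$ values $k\in\{i+1,\dots,N\}$. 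Regrouping accordingly collapses the double product into $\prod_{i=0}^{N-1}\bigl(\tfrac12+\tfrac12 e^{2^iz}\bigr)^{N-i}$, which is \eqref{eq:mgf1}.

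I do not expect a genuine obstacle here; the argument is a routine computation once independence is used. The only points that need care are the justification that the $V_i$ are independent (equivalently, that $\Pr\{Z_N=k\}=p_k^{(N)}$ really is the $N$-fold convolution of the uniform laws), the binary factorization of the geometric sum, and the counting of the exponent $N-i$ in the regrouping. As an alternative route to \eqref{eq:mgf1} that avoids any probabilistic input, I note that $\mathbb{E}e^{zZ_N}=\sum_k p_k^{(N)}e^{kz}$ can be read off directly from the generating function \eqref{eq:gf} for the coefficients $\alpha_k^{(N)}$ together with the normalization $p_k^{(N)}=\alpha_k^{(N)}/2^{N(N+1)/2}$ coming from \eqref{eq:pkN}: substituting $x=e^z$ in \eqref{eq:gf} and distributing the factor $2^{-N(N+1)/2}=\prod_{i=0}^{N-1}2^{-(N-i)}$ across the product yields \eqref{eq:mgf1} immediately, from which \eqref{eq:mgf2} follows by the same factorization as above.
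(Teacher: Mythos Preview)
Your proof is correct and follows essentially the same route as the paper: factorize by independence, evaluate each factor as a geometric sum to obtain \eqref{eq:mgf2}, then use the binary factorization $\frac{1-x^{2^k}}{1-x}=\prod_{i=0}^{k-1}(1+x^{2^i})$ and regroup the double product by counting how many $k$'s contain a given $i$ to reach \eqref{eq:mgf1}. Your alternative derivation of \eqref{eq:mgf1} directly from the generating function \eqref{eq:gf} and the normalization \eqref{eq:pkN} is a valid shortcut that the paper does not spell out in its proof, though it is implicit in the surrounding setup.
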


\begin{proof}
Each random variable $V_{i}$ has moment generating function
\[
\mathbb{E}e^{zV_{k}}=\frac{1}{2^{k}}\sum_{k=1}^{2^{k}-1}e^{zk}=\frac{1}{2^{k}}\frac{1-e^{z2^{k}}}{1-e^{z}},
\]
so that
\[
\mathbb{E}e^{zZ_{N}}=\prod_{k=1}^{N}\frac{1}{2^{k}}\frac{1-e^{2^{k}z}}{1-e^{z}}.
\]
From the identity
\[
\frac{1-e^{2^{k}z}}{1-e^{z}}=\prod_{i=0}^{k-1}\left(1+e^{2^{i}z}\right),
\]
we deduce 
\begin{align*}
\prod_{k=1}^{N}\frac{1-e^{2^{k}z}}{1-e^{z}} & =\prod_{k=1}^{N}\prod_{i=0}^{k-1}\left(1+e^{2^{i}z}\right)=\prod_{i=0}^{N-1}\prod_{k=i+1}^{N}\left(1+e^{2^{i}z}\right)\\
 & =\prod_{i=0}^{N-1}\left(1+e^{2^{i}z}\right)^{N-i}.
\end{align*}
\end{proof}
From the moment generating function, we deduce the following:
\begin{prop}
A stochastic representation of $Z_N$ is
\begin{equation}
Z_{N}=\sum_{k=1}^{N}W_{k}\label{eq:sr1},
\end{equation}
where $W_{k}$ are independent, and each $W_{k}$ has discrete uniform
distribution
\[
\Pr\left\{ W_{k}=l\right\} =\begin{cases}
\frac{1}{2^{k}}, & 0\le l\le2^{k}-1;\\
0, & \text{otherwise.}
\end{cases}
\]
Equivalently,
\begin{align*}
Z_{N} & =V_{0}^{\left(0\right)}+\left(V_{0}^{\left(1\right)}+2V_{1}^{\left(1\right)}\right)+\left(V_{0}^{\left(2\right)}+2V_{1}^{\left(2\right)}+4V_{2}^{\left(2\right)}\right)+\cdots\\
 & +\left(V_{0}^{\left(N-1\right)}+2V_{1}^{\left(N-1\right)}+\cdots+2^{N-1}V_{N-1}^{\left(N-1\right)}\right)
\end{align*}
where the $V_{i}^{\left(k\right)}$ are independent Bernoulli random variables
with
\[
\Pr\left\{ V_{i}^{\left(k\right)}=0\right\} =\Pr\left\{ V_{i}^{\left(k\right)}=1\right\} =\frac{1}{2}.
\]
\end{prop}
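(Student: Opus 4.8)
The plan is to establish both representations as equalities in law by comparing moment generating functions, exploiting the fact that a finitely supported distribution is uniquely determined by its moment generating function --- the latter being a finite Laurent polynomial in $e^z$ whose coefficients are exactly the probability weights, so that matching two such functions forces the weights to agree. Because a stochastic representation asserts nothing more than equality in distribution, the whole statement reduces to an algebraic identity between generating functions, most of which is already recorded in the preceding proposition.

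First I would compute the moment generating function of a single discrete uniform variable $W_k$ on $\{0,1,\dots,2^k-1\}$; a geometric summation gives
\[
\mathbb{E}e^{zW_k}=\sum_{l=0}^{2^k-1}\frac{1}{2^k}e^{zl}=\frac{1}{2^k}\frac{1-e^{2^kz}}{1-e^z}.
\]
Taking the $W_k$ mutually independent, the moment generating function of $\sum_{k=1}^N W_k$ factors as $\prod_{k=1}^N\mathbb{E}e^{zW_k}$, which is precisely formula \eqref{eq:mgf2} for $\mathbb{E}e^{zZ_N}$. By uniqueness this yields $Z_N\overset{d}{=}\sum_{k=1}^N W_k$, the first assertion.

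For the Bernoulli array I would reuse the factorization $\frac{1-e^{2^kz}}{1-e^z}=\prod_{i=0}^{k-1}\left(1+e^{2^iz}\right)$ already established in the proof of \eqref{eq:mgf2}. It gives
\[
\mathbb{E}e^{zW_k}=\prod_{i=0}^{k-1}\frac{1+e^{2^iz}}{2}=\prod_{i=0}^{k-1}\mathbb{E}e^{z\,2^iV_i^{(k)}},
\]
where each $V_i^{(k)}$ is Bernoulli with $\Pr\{V_i^{(k)}=0\}=\Pr\{V_i^{(k)}=1\}=\tfrac12$; equivalently $W_k\overset{d}{=}\sum_{i=0}^{k-1}2^iV_i^{(k)}$, which is simply the statement that the binary digits of a uniformly chosen integer in $\{0,\dots,2^k-1\}$ are independent fair bits. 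Substituting these expansions into $Z_N=\sum_{k=1}^N W_k$ and relabeling the outer index via $k\mapsto k-1$ produces the displayed triangular array, the block of superscript $k\in\{0,\dots,N-1\}$ being the digit expansion $\sum_{i=0}^{k}2^iV_i^{(k)}$ of $W_{k+1}$.

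The only point demanding care is the bookkeeping that reconciles the two index conventions: the first representation runs $k$ from $1$ to $N$ with $W_k$ uniform on $\{0,\dots,2^k-1\}$, whereas the array runs its block superscript from $0$ to $N-1$ with the inner digit sum reaching $2^{k}$, and the two agree after the shift $k\mapsto k-1$. Joint independence of all the $V_i^{(k)}$ follows from the independence of the $W_k$ together with the independence of the binary digits inside each $W_k$. I do not anticipate any real obstacle here --- the argument is a transparent moment generating function computation whose substantive identity is supplied verbatim by the previous proposition.
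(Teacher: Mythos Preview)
Your proof is correct and follows essentially the same approach as the paper, which simply says the result follows from the moment generating functions \eqref{eq:mgf1} and \eqref{eq:mgf2}. You have supplied the details the paper leaves implicit: matching \eqref{eq:mgf2} with the product of the uniform MGFs, and unpacking the factorization behind \eqref{eq:mgf1} to obtain the Bernoulli bit expansion, including the index shift $k\mapsto k-1$.
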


\begin{proof}
This follows from the characteristic functions (\ref{eq:mgf1}) and
(\ref{eq:mgf2}).
\end{proof}
These stochastic representations allow an easy computation of the
first moments of $Z_{N}$.
\begin{prop}
The random variable $Z_{N}$ has mean
\[
\mu_{N}=\mathbb{E}Z_{N}=2^{N}-\frac{N}{2}-1
\]
and variance
\[
\sigma_{N}^{2}=\frac{1}{9}\left(2^{2N}-\frac{3}{4}N-1\right).
\]
\end{prop}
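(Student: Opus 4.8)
The plan is to avoid differentiating the moment generating functions \eqref{eq:mgf1}--\eqref{eq:mgf2} directly (each carries a removable singularity at $z=0$ that complicates the Taylor extraction) and instead to lean on the stochastic representation \eqref{eq:sr1}, namely $Z_{N}=\sum_{k=1}^{N}W_{k}$ with the $W_{k}$ \emph{independent} and each $W_{k}$ uniformly distributed over $\left\{0,1,\ldots,2^{k}-1\right\}$. Linearity of expectation then reduces the mean to a sum of the means $\mathbb{E}W_{k}$, while independence reduces the variance to a sum of the variances $\mathrm{Var}\,W_{k}$, with no cross terms surviving.

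First I would record the elementary moments of a discrete uniform law on $\left\{0,1,\ldots,m-1\right\}$, which has mean $\frac{m-1}{2}$ and variance $\frac{m^{2}-1}{12}$. Specializing to $m=2^{k}$ gives $\mathbb{E}W_{k}=\frac{2^{k}-1}{2}$ and $\mathrm{Var}\,W_{k}=\frac{2^{2k}-1}{12}$. Summing the first of these over $k$ and using $\sum_{k=1}^{N}2^{k}=2^{N+1}-2$ yields
\[
\mu_{N}=\sum_{k=1}^{N}\frac{2^{k}-1}{2}=\frac{1}{2}\left(2^{N+1}-2-N\right)=2^{N}-\frac{N}{2}-1,
\]
which is the claimed mean.

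For the variance, summing $\mathrm{Var}\,W_{k}$ and using the geometric sum $\sum_{k=1}^{N}4^{k}=\frac{4\left(4^{N}-1\right)}{3}$ gives
\[
\sigma_{N}^{2}=\sum_{k=1}^{N}\frac{2^{2k}-1}{12}=\frac{1}{12}\left(\frac{4\left(4^{N}-1\right)}{3}-N\right)=\frac{4^{N}-1}{9}-\frac{N}{12}.
\]
It then remains only to rewrite this in the stated form: since $\frac{1}{12}=\frac{1}{9}\cdot\frac{3}{4}$, factoring out $\frac{1}{9}$ produces $\sigma_{N}^{2}=\frac{1}{9}\left(2^{2N}-\frac{3}{4}N-1\right)$.

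The main obstacle is purely one of bookkeeping: correctly evaluating the two geometric sums $\sum 2^{k}$ and $\sum 4^{k}$ and confirming the arithmetic identity $\frac{1}{12}=\frac{1}{9}\cdot\frac{3}{4}$ that collapses the variance to its advertised closed form. Everything else follows immediately from the independence built into \eqref{eq:sr1}.
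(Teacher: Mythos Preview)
Your proof is correct and follows essentially the same approach as the paper: both use the stochastic representation $Z_{N}=\sum_{k=1}^{N}W_{k}$ with independent discrete uniforms, compute $\mathbb{E}W_{k}=\frac{2^{k}-1}{2}$ and $\mathrm{Var}\,W_{k}=\frac{2^{2k}-1}{12}$, and sum. You supply a bit more arithmetic detail on the geometric sums and the final simplification, but the method is identical.
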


\begin{proof}
Using the stochastic representation (\ref{eq:sr1}) and the fact that
\[
\mathbb{E}W_{k}=\frac{2^{k}-1}{2},
\]
we deduce
\[
\mathbb{E}Z_{N}=\sum_{k=1}^{N}\frac{2^{k}-1}{2}=2^{N}-\frac{N}{2}-1.
\]
The variance is computed as follows:
\[
\sigma_{N}^{2}=\sum_{k=1}^{N}\sigma_{W_{k}}^{2}
\]
with 
\[
\sigma_{W_{k}}^{2}=\frac{2^{2k}-1}{12}
\]
so that
\[
\sigma_{W_{k}}^{2}=\sum_{k=1}^{N}\frac{2^{2k}-1}{12}=\frac{1}{9}\left(2^{2N}-\frac{3}{4}N-1\right).
\]
\end{proof}
As a consequence, the standardized random variable
\begin{equation}
\hat{Z}_{N}=\frac{Z_{N}-\mu_{N}}{\sigma_{N}}\label{eq:standard}
\end{equation}
has zero mean and unit variance. It can be further characterized as 
follows.
\begin{prop}
The moment generating function of the standardized random variable
$\hat{Z}_{N}$ is
\begin{equation}
\mathbb{E}e^{z\hat{Z}_{N}}=\mathbb{E}e^{\frac{z}{\sigma_{N}}\left(Z_{N}-\mu_{N}\right)}=\prod_{k=1}^{N}\left(\frac{1}{2^{k}}\frac{\sinh\left(\frac{z}{2\sigma_{N}}2^{k}\right)}{\sinh\left(\frac{z}{2\sigma_{N}}\right)}\right)=\prod_{1\le l\le k}^{N}\cosh\left(2^{k-l}\frac{z}{2\sigma_{N}}\right).\label{eq:cf}
\end{equation}
Its cumulant generating function is
\[
\log\varphi_{\hat{Z}_{N}}\left(z\right)=\sum_{k=1}^{N}\log\left(\frac{1}{2^{k}}\frac{\sinh\left(\frac{z}{2\sigma_{N}}2^{k}\right)}{\sinh\left(\frac{z}{2\sigma_{N}}\right)}\right)=\sum_{n\ge1}\kappa_{2n}^{\left(N\right)}\frac{z^{2n}}{2n!},
\]
so that its cumulants are given by
\begin{equation}
\kappa_{2n}^{\left(N\right)}=\frac{9^{n}}{\left(4^{N}-\frac{3}{4}N-1\right)^{n}}\frac{B_{2n}}{2n}\frac{4^{n}\left(4^{nN}-N-1\right)+N}{4^{n}-1}.\label{eq:cumulants}
\end{equation}
\end{prop}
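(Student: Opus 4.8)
The plan is to build everything on the two representations of the moment generating function of $Z_{N}$ already established, namely $\mathbb{E}e^{zZ_{N}}=\prod_{k=1}^{N}\frac{1}{2^{k}}\frac{1-e^{2^{k}z}}{1-e^{z}}$ together with the values of $\mu_{N}$ and $\sigma_{N}^{2}$ from the preceding propositions. First I would write $\mathbb{E}e^{z\hat{Z}_{N}}=e^{-z\mu_{N}/\sigma_{N}}\,\mathbb{E}e^{(z/\sigma_{N})Z_{N}}$ and set $w=z/\sigma_{N}$, reducing the claim to a factor-by-factor computation. Applying $1-e^{x}=-2e^{x/2}\sinh(x/2)$ to both the numerator and the denominator of the $k$-th factor gives $\frac{1-e^{2^{k}w}}{1-e^{w}}=e^{w(2^{k}-1)/2}\frac{\sinh(2^{k-1}w)}{\sinh(w/2)}$. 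Since $\mu_{N}=\sum_{k=1}^{N}\frac{2^{k}-1}{2}$, the centering factor $e^{-w\mu_{N}/\sigma_{N}}$ distributes across the product and cancels each spurious $e^{w(2^{k}-1)/2}$ exactly; restoring $w/2=z/(2\sigma_{N})$ yields the first claimed product.

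For the second equality in \eqref{eq:cf} I would iterate the duplication identity $\sinh(2x)=2\sinh x\cosh x$ to obtain $\frac{\sinh(2^{k}x)}{\sinh x}=2^{k}\prod_{j=0}^{k-1}\cosh(2^{j}x)$, so that the prefactor $\frac{1}{2^{k}}$ is absorbed and the $k$-th factor becomes $\prod_{j=0}^{k-1}\cosh(2^{j}x)$ with $x=z/(2\sigma_{N})$. Reindexing by $l=k-j$ converts $\prod_{k=1}^{N}\prod_{j=0}^{k-1}$ into the double product $\prod_{1\le l\le k}^{N}\cosh(2^{k-l}x)$, which is exactly the stated form.

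For the cumulants I would take logarithms and use the Bernoulli expansion $\log\frac{\sinh x}{x}=\sum_{n\ge1}\frac{2^{2n}B_{2n}}{2n\,(2n)!}x^{2n}$, obtained by integrating the classical series for $\coth$. Writing $\log\!\big(\frac{1}{2^{k}}\frac{\sinh(2^{k}x)}{\sinh x}\big)=\log\frac{\sinh(2^{k}x)}{2^{k}x}-\log\frac{\sinh x}{x}$, the constant $\log 2^{k}$ produced by the rescaling cancels against the $\frac{1}{2^{k}}$ prefactor, so the $k$-th term contributes $\sum_{n\ge1}\frac{2^{2n}B_{2n}}{2n\,(2n)!}(4^{nk}-1)x^{2n}$. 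Summing the geometric series via $\sum_{k=1}^{N}(4^{nk}-1)=\frac{4^{n}(4^{nN}-N-1)+N}{4^{n}-1}$, substituting $x^{2n}=z^{2n}/(4^{n}\sigma_{N}^{2n})$ so that the factor $2^{2n}/4^{n}$ cancels, and finally inserting $\sigma_{N}^{2n}=9^{-n}\big(4^{N}-\frac{3}{4}N-1\big)^{n}$, I would match the coefficient of $z^{2n}/(2n)!$ with \eqref{eq:cumulants}.

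The computation is essentially bookkeeping; the two steps that require genuine care are verifying that the centering exponential precisely annihilates the factors $e^{w(2^{k}-1)/2}$, and that the $\log 2^{k}$ terms cancel against the normalizations $1/2^{k}$. These two cancellations are exactly what make both the clean $\sinh$ product and the Bernoulli-number closed form possible, and I expect no obstacle beyond tracking the indices and constants faithfully.
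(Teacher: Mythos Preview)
Your approach is correct and essentially the same as the paper's: both start from the product form of $\mathbb{E}e^{zZ_N}$, pass to $\sinh$ factors, use the iterated duplication identity to obtain the $\cosh$ product, and then expand $\log\sinh$ via the Bernoulli numbers to read off the cumulants. You are in fact more explicit than the paper about the centering step (showing that $e^{-w\mu_N}$ exactly kills the factors $e^{w(2^k-1)/2}$), which the paper leaves implicit.
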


\begin{proof}
Using 
\[
\frac{1}{2^{k}}\frac{\sin\left(2^{k}x\right)}{\sin x}=\prod_{l=1}^{k}\cos\left(2^{k-l}x\right),
\]
we deduce
\[
\mathbb{E}e^{z\hat{Z}_{N}}=\prod_{1\le l\le k}^{N}\cosh\left(2^{k-l}\frac{z}{2\sigma_{N}}\right).
\]
The cumulant generating function is
\[
\log\varphi_{\hat{Z}_{N}}\left(z\right)=\sum_{k=1}^{N}\log\left(\frac{1}{2^{k}}\frac{\sinh\left(\frac{z}{2\sigma_{N}}2^{k}\right)}{\sinh\left(\frac{z}{2\sigma_{N}}\right)}\right)=\sum_{n\ge1}\kappa_{2n}^{\left(N\right)}\frac{z^{2n}}{2n!}.
\]
Using
\[
\log\sinh\left(az\right)=\log a+\sum_{n\ge1}\frac{B_{2n}}{2n}a^{2n}\frac{z^{2n}}{2n!}
\]
we deduce
\begin{align*}
\log\varphi_{\hat{Z}_{N}}\left(z\right) & =\sum_{k=1}^{N}\sum_{n\ge1}\frac{B_{2n}}{2n}\frac{\left(2^{2nk}-1\right)}{\sigma_{N}^{2n}}\frac{z^{2n}}{2n!}\\
 & =\sum_{n\ge1}\frac{B_{2n}}{2n}\left(\sum_{k=1}^{N}\frac{\left(2^{2nk}-1\right)}{\sigma_{N}^{2n}}\right)\frac{z^{2n}}{2n!}\\
 & =\sum_{n\ge1}\frac{B_{2n}}{2n}\frac{4^{n}\left(4^{nN}-N-1\right)+N}{\sigma_{N}^{2n}\left(4^{n}-1\right)}\frac{z^{2n}}{2n!}
\end{align*}
so that the cumulants are
\[
\kappa_{2n}^{\left(N\right)}=\frac{9^{n}}{\left(4^{N}-\frac{3}{4}N-1\right)^{n}}\frac{B_{2n}}{2n}\frac{4^{n}\left(4^{nN}-N-1\right)+N}{4^{n}-1}.
\]
\end{proof}
The asymptotic values of the cumulants can be explicitly computed
from expression (\ref{eq:cumulants}).
\begin{cor}
The cumulants of the limit distribution are
\[
\kappa_{2n}^{\left(\infty\right)}=\lim_{N\to+\infty}\kappa_{2n}^{\left(N\right)}=\frac{B_{2n}}{2n}\frac{6^{2n}}{2^{2n}-1}.
\]
As a consequence, the limit distribution is not Gaussian.
\end{cor}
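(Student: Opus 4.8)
The plan is to read the limit directly off the closed form (\ref{eq:cumulants}), using only the elementary fact that the exponential $4^{N}$ dominates every polynomial in $N$, and then to deduce non-Gaussianity from the non-vanishing of the higher limiting cumulants.

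First I would isolate the dominant exponential growth in the numerator and denominator of (\ref{eq:cumulants}). Writing the denominator as
\[
\left(4^{N}-\tfrac{3}{4}N-1\right)^{n}=4^{nN}\left(1-\tfrac{3}{4}N\,4^{-N}-4^{-N}\right)^{n},
\]
the parenthetical factor tends to $1$ because $N\,4^{-N}\to0$. Likewise I would factor the numerator as
\[
4^{n}\left(4^{nN}-N-1\right)+N=4^{n}4^{nN}\left(1-(N+1)4^{-nN}+N\,4^{-n}4^{-nN}\right),
\]
whose parenthetical factor again tends to $1$ since $4^{nN}$ swamps every polynomial term in $N$. Cancelling the common factor $4^{nN}$ and collecting the surviving constants leaves $\frac{B_{2n}}{2n}\cdot\frac{9^{n}4^{n}}{4^{n}-1}$, and the purely cosmetic identities $9^{n}4^{n}=36^{n}=6^{2n}$ and $4^{n}-1=2^{2n}-1$ turn this into exactly $\kappa_{2n}^{(\infty)}=\frac{B_{2n}}{2n}\frac{6^{2n}}{2^{2n}-1}$. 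As an internal check, the case $n=1$ returns $\kappa_{2}^{(\infty)}=\frac{B_{2}}{2}\frac{36}{3}=1$, consistent with the unit-variance normalisation of $\hat{Z}_{N}$.

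For the non-Gaussian conclusion I would appeal to the classical characterisation that a normal law is the unique distribution all of whose cumulants of order at least three vanish. Here $B_{2n}\neq0$ for every $n\ge1$, so each limiting cumulant $\kappa_{2n}^{(\infty)}$ is strictly nonzero; in particular
\[
\kappa_{4}^{(\infty)}=\frac{B_{4}}{4}\frac{6^{4}}{2^{4}-1}=-\frac{18}{25}\neq0.
\]
A single nonzero higher-order cumulant already precludes Gaussianity, so the limit distribution cannot be normal.

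I do not anticipate a serious obstacle: the computation is a routine limit, and the only point demanding care is confirming that the polynomial-in-$N$ corrections are genuinely negligible against the exponential factors $4^{N}$ and $4^{nN}$. The one conceptual subtlety, which I would flag in a sentence rather than resolve in full, is that convergence of the cumulants $\kappa_{2n}^{(N)}$ does not of itself force convergence in distribution of $\hat{Z}_{N}$; the existence of a genuine limit law is borrowed from the discussion preceding the statement, and the corollary is most safely read as asserting that the limiting cumulants take the stated values and are incompatible with those of a Gaussian. If one wished to make the existence claim self-contained, the natural route would be to pass to the limit directly in the cumulant generating function $\sum_{n\ge1}\kappa_{2n}^{(N)}\frac{z^{2n}}{2n!}$, whose termwise limits we have just computed.
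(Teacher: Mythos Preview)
Your proposal is correct and follows essentially the same route as the paper, which gives no explicit proof and treats the corollary as an immediate consequence of the closed form (\ref{eq:cumulants}); your factoring of the dominant $4^{nN}$ from numerator and denominator is exactly the intended computation. Your additions --- the sanity check at $n=1$, the explicit value $\kappa_{4}^{(\infty)}=-18/25$, and the remark that convergence of cumulants does not by itself guarantee convergence in distribution --- go beyond what the paper provides and are welcome; indeed the paper itself is noncommittal about the existence of the limit law, leaving even the question of whether it has a density open in the Conclusion.
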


\begin{rem}
For large $n,$ the asymptotic cumulants of the limit law behave like
\[
\kappa_{2n}^{\left(\infty\right)}=\frac{B_{2n}}{2n}3^{2n}
\]
which correspond to those of the uniform distribution on the interval
$\left[-3,3\right].$ This fact could help us approximate the limit distribution $\lambda\left(z\right).$
\end{rem}

\section{Acknowledgments}
The authors thank Eric Rowland for interesting comments on an earlier version of this manuscript, and Terri, Sen, Valerio, Natalie, and the Calvert Suite for their support.

\end{document}